\pgfplotsset{compat=newest}
\newcommand{\BigO}[1]{\ensuremath{\operatorname{O}\left(#1\right)}}
\newcommand{\eps}{\varepsilon}
\newcommand{\R}{\mathbb{R}}
\newcommand{\Om}{\Omega}
\DeclareMathOperator{\arctanh}{arctanh}
\numberwithin{equation}{section}
\newtheorem{theorem}{Theorem}[section]
\newtheorem{proposition}[theorem]{Proposition}
\newtheorem{definition}[theorem]{Definition}
\newtheorem{lemma}[theorem]{Lemma}
\newtheorem{remark}[theorem]{Remark}
\begin{document}
\title{\bf A variational model of charged drops in dielectrically
  matched binary fluids: the effect of charge discreteness}

\author[1,2]{Cyrill B. Muratov}

\author[2]{Matteo Novaga}

\author[1]{Philip Zaleski}

\affil[1]{Department of Mathematical Sciences, New Jersey Institute of
  Technology, Newark, NJ 07102, USA}

  \affil[2]{Dipartimento di Matematica, Universit\`a di Pisa, Largo
    B. Pontecorvo 5, Pisa 56127, Italy}
  
\maketitle

\begin{abstract}
  This paper addresses the ill-posedness of the classical Rayleigh
  variational model of conducting charged liquid drops by
  incorporating the discreteness of the elementary
  charges. Introducing the model that describes two immiscible fluids
  with the same dielectric constant, with a drop of one fluid
  containing a fixed number of elementary charges together with their
  solvation spheres, we interpret the equilibrium shape of the drop as
  a global minimizer of the sum of its surface energy and the
  electrostatic repulsive energy between the charges under fixed drop
  volume. For all model parameters, we establish existence of
  generalized minimizers that consist of at most a finite number of
  components ``at infinity''. We also give several existence and
  non-existence results for classical minimizers consisting of only a
  single component. In particular, we identify an asymptotically sharp
  threshold for the number of charges to yield existence of minimizers
  in a regime corresponding to macroscopically large drops containing
  a large number of charges. The obtained non-trivial threshold is
  significantly below the corresponding threshold for the Rayleigh
  model, consistently with the ill-posedness of the latter and
  demonstrating a particular regularizing effect of the charge
  discreteness. However, when a minimizer does exist in this regime,
  it approaches a ball with the charge uniformly distributed on the
  surface as the number of charges goes to infinity, just as in the
  Rayleigh model. Finally, we provide an explicit solution for the
  problem with two charges and a macroscopically large drop.
\end{abstract}

\newpage

\tableofcontents

\section{Introduction}
\label{sec:introduction}

There has recently been a growing interest in geometric variational
problems featuring a competition of attractive and repulsive
interactions \cite{cmt:nams17}. A prototypical model giving rise to
the problems of this kind is the celebrated Gamow's liquid drop model
of the atomic nucleus \cite{gamow30}, in which a competition of the
cohesive action of the surface tension with the Coulombic repulsion
gives rise to delicate questions about the existence and the shape of
minimizers, etc. There are now many studies of this model and its
various generalizations and extensions that are too numerous to list
here (for some recent works, see, e.g.,
\cite{frank21,merlet22,novaga22} and references therein).

We focus on a closely related problem arising from the classical model
introduced by Lord Rayleigh that describes the energetics of a
perfectly conducting charged liquid drop \cite{rayleigh1882} (for the
technical details of the model, see section \ref{sec:model}). In 1882,
Rayleigh demonstrated that a spherical liquid drop becomes linearly
unstable with respect to asymmetric distortions of its shape when the
amount of charge on the droplet exceeds a critical value called the
Rayleigh charge. Such an interfacial instability driven by the
electric field was first observed experimentally by Zeleny
\cite{zeleny14,zeleny17} and subsequently studied by great many
authors (see, e.g., \cite{doyle64,abbas67,gomez94,duft03,giglio08}),
not least because of its important applications to analytical
chemistry \cite{gaskell97}.  Surprisingly, however, the linear
stability of the charged drop below the critical charge in the
Rayleigh model was recently shown not to imply stability of a
spherical drop with respect to arbitrarily small perturbations of its
shape \cite{mn:prsa16}. In fact, the Rayleigh model leads to a
problem that is variationally ill-posed \cite{goldman15,
  mn:prsa16,goldman17}. Mathematically, this is because the
regularizing action of the perimeter is not sufficient to control the
electric charges at small scales \cite{goldman22}. Physically, it
manifests itself in the formation of singularities in the form of
Taylor cones and jets \cite{taylor64,kebarle00,fernandezdelamora07}.

The variational ill-posedness of the above problem indicates that the
Rayleigh model does not contain all the physics that is necessary to
describe the equilibrium shapes of conducting charged drops. Several
regularizing mechanisms have, therefore, been proposed, including
thermal effects that restore existence of minimizers under certain
conditions due to the spreading of the charges into a thin Debye layer
beneath the droplet surface
\cite{mn:prsa16,dephilippis23,mukoseeva23}. Nevertheless, in some
situation such as cryogenic liquids or nanoscale droplets, in which
the thermal motion of free charges is suppressed, another physical
mechanisms may be necessary. One such mechanism relies on the
fundamental discreteness of the electric charges
\cite{iribane76,fenn93,labowsky98,kebarle00}. In this paper, we
explore this possibility in the special case of dielectrically matched
fluids, in which there is no dielectric contrast between the droplet
and its surroundings (again, see section \ref{sec:model} for technical
details).

For a model that keeps track of the positions of individual charges
inside the droplet, we establish existence of generalized minimizers,
a suitable notion of minimality for this kind of problems that
accounts for a possibility of components that are infinitely far
apart, first introduced in \cite{kmn:cmp16}. We also establish the
regularity and connectedness of the components of the generalized
minimizers. We then proceed to investigate under which conditions
classical minimizers, consisting of only a single component, are
possible in the physically important regime of sufficiently strong
repulsion between the charges in comparison to the surface
tension. Here we establish a sharp existence/non-existence criterion
in the case of many charges, which yields a critical charge for
existence that is significantly smaller than the Rayleigh charge. We
also establish some structural information about the locations of the
charges when the minimizers do exist and show that in a suitable
continuum limit within the existence range the minimizer converges in
an appropriate sense to a ball with the charges uniformly distributed
on its surface. Lastly, we present an explicit solution of the
variational problem in the case of only two point charges.

Our paper is organized as follows. In section \ref{sec:model}, we
introduce the model considered in this paper and discuss the relevant
parameter ranges. In section \ref{sec:main-results}, we state the main
results of our paper. In section \ref{sec:gener-minim}, we present the
proof of Theorem \ref{exgen} that gives existence of generalized
minimizers. In section \ref{sec:case-many-charges}, we present the
proofs of the existence result of Theorem \ref{t:exist}, the
non-existence result of Theorem \ref{t:non}, and the asymptotic
characterization of minimizers with many charges in Theorem
\ref{t:ball}. Lastly, in section \ref{s:two} we present the analysis
of the two-charge problem that yields Theorem \ref{t:2}. This section
also gives an explicit characterization of the energy minimizers.

\paragraph{Acknowledgements} The work of C.B.M. was supported, in
part, by NSF via grant DMS-1908709. M. N. was supported by the PRIN
2022 Project 2022E9CF89. C. B. M. and M. N. are members of
INdAM/GNAMPA and acknowledge the MUR Excellence Department Project
awarded to the Department of Mathematics, University of Pisa, CUP
I57G22000700001. This study also received funding from the European
Union – Next Generation EU – PRIN 2022 PNRR Project P2022WJW9H.

\section{Model}
\label{sec:model}

We consider a system consisting of two immiscible fluids with matched
dielectric constants, i.e., both fluids have the relative dielectric
constant equal to $\eps_d$. Because of this, we do not need to worry
about the shape dependent dielectric polarization of the liquid drop
in the presence of charges, which would otherwise considerably
complicate the analysis \cite{dephilippis23}. In the following, we
simply refer to the first fluid of finite volume surrounded by the
second ambient fluid as the {\em liquid drop}. A notable example of
such a fluid system is liquid helium in equilibrium with its vapor,
which has been used to investigate the phenomenon of Wigner
crystallization of charges at the liquid-vapor interface and is known
to undergo charge-driven interfacial instabilities
\cite{grimes79,gorkov73,tsao98,abdurahimov12}. More recently,
charge-containing helium nanodroplets have been considered as a host
medium for a variety of applications in molecular spectroscopy and
quantum chemistry \cite{barranco06}.

At the level of the continuum, the equilibrium shape of a charged,
perfectly conducting liquid drop may be investigated with the help of
a model that goes back over 140 years to Lord Rayleigh
\cite{rayleigh1882}. In this model, an equilibrium drop is viewed as a
minimizer (at least local) of the energy
\begin{align}
  \label{eq:E0}
  \mathcal E(\Omega) := \sigma P(\Omega) + {Q^2 \over 2 C(\Omega)}, 
\end{align}
where $\Omega \subset \mathbb R^3$ is the set occupied by the drop
that carries the charge $Q$, with the volume of the drop
$|\Omega| = m$. Here, $\sigma$ is the surface tension of the liquid
interface, $P(\Omega)$ is the perimeter of the set $\Omega$ defined by
\begin{equation}\label{aper} 
  P(\Omega) :=    \sup \left\{ \int_\Omega \nabla \cdot \phi (y) \, dy:
    \, \phi \in C^1_c(\mathbb R^3;\mathbb R^3), \ |\phi|
    \leq 1 \right\},
\end{equation}
which is a suitable measure-theoretic generalization of the surface
measure for smooth sets, and $C$ is the electrostatic capacity defined
by
\begin{align}
  \label{eq:C0}
  C^{-1}(\Omega) := \inf_{\mu(\Omega) = 1} 
  \int_\Omega \int_{\Omega} {1 \over 4 \pi \eps_0 \eps_d |x - y|} \, d \mu(x)
  \, d \mu(y), 
\end{align}
where $\eps_0$ is the permeability of vacuum, and the minimization is
carried out over probability measures $\mu$ supported on
$\Omega$. However, as was already mentioned, this model was recently
shown to be variationally ill-posed \cite{goldman15,mn:prsa16}. Thus,
a regularization of the electrostatic problem is necessary to enable
existence of even local energy minimizers in the natural classes of
liquid configurations.

In this paper, we appeal to the discrete nature of electric charges as
a possible physical regularizing mechanism
\cite{iribane76,fenn93,labowsky98,kebarle00}, while ignoring the
entropic effects associated with thermal agitation of the charges
(appropriate for nanoscale droplets or cryogenic fluids). This amounts
to restricting the measures appearing in \eqref{eq:C0} to those
associated with $N$ point charges:
\begin{align}
  \label{eq:muN}
  \mu = {1 \over N} \sum_{i=1}^N \delta_{x_i},
\end{align}
where $x_i \in \mathbb R^3$ are the positions of the charges and
$\delta_{x_i}$ are the Dirac delta-measures centered at $x_i$. Note
that in doing so we must exclude the self-interaction of charges.
Setting $x \not= y$ in the integral in \eqref{eq:C0} then yields the
following expression for the energy:
\begin{align}
  \label{eq:EN0}
  \mathcal E_N(\Omega, X) := \sigma P(\Omega) + {e^2 \over 8 \pi \eps_0
  \eps_d} \sum_{i \not= j} {1 \over |x_i - x_j|}.
\end{align}
Here the set $\Omega \subset \R^3$ again denotes the domain occupied
by the liquid drop, the discrete set
$X = \cup_{i=1}^N \{ x_i \} \subset \R^3$ specifies the positions of
$N$ point charges, and $e$ is the elementary charge (positive), so
that $|Q| = N e$. For simplicity, we assume a single species of
monovalent ions dissolved in the liquid drop, with the ambient fluid a
perfect dielectric.

Notice that every charge in the liquid drop strongly attracts a
cluster of liquid (solvent) molecules forming a {\em solvation shell}
around the charge (ion). We model this effect by requiring that the
liquid drop contains a ball of radius $r_0$, called the solvation
radius, around each charge \cite{iribane76}, i.e., we have
$B_{r_0}(x_i) \subset \Omega$ for each $i = 1, \ldots, N$, with
$B_{r_0}(x_i)$ mutually disjoint. The solvation radius of simple
monoatomic ions in polar solvents like water usually measures to
fractions of a nanometer.

To assess the relative strengths of the two terms in the energy and to
carry out an appropriate non-dimensionalization, we introduce the
molecular length scale
\begin{align}
  \label{eq:rsigma}
  r_\sigma := \sqrt{k_B T \over \sigma},   
\end{align}
where $k_B T$ is the temperature in the energy units, above which the
interface may be considered as sharp and well defined in the presence
of thermal noise. For low molecular weight liquids at room
temperature, $r_\sigma$ is on the order of a fraction of a
nanometer. This scale may be compared with the Bjerrum length
\begin{align}
  \label{eq:rBjerrum}
  r_B := {e^2 \over 4 \pi \eps_0 \eps_d k_B T},
\end{align}
which measures the scale at which the Coulombic energy of a pair of
elementary charges in a dielectric liquid is comparable to the thermal
energy. In polar solvents at room temperature, this length is on the
order of a few nanometers.  Rescaling lengths with $r_\sigma$ and
measuring the energy in the units of $k_B T$ then yields
$\mathcal E_N(r_\sigma \Omega, r_\sigma X) = k_B T E_{\rho,\lambda,N}
(\Omega, X)$, where
\begin{align}
  \label{eq:EN}
  E_{\rho,\lambda,N}(\Omega, X) := P(\Omega) + \lambda \sum_{i =
  1}^{N-1} \sum_{j=i+1}^N {1 \over |x_i - x_j|},
\end{align}
now with $B_\rho(x_i) \subset \Omega$ disjoint, where we introduced
the dimensionless parameters
\begin{align}
  \label{eq:lambdarho}
  \rho := {r_0 \over
  r_\sigma}, \qquad \lambda := {r_B \over
  r_\sigma}. 
\end{align}
From the basic physical considerations already mentioned, for typical
liquids at room temperature both $\rho$ and $\lambda$ are expected to
be of order one \cite{iribane76}. For example, for small monovalent
ions in ethanol (a common solvent for electrospray) we have
$\rho \approx 1$ and $\lambda \approx 5$. In contrast, for liquid
helium at $T = 2$ K, for which $r_\sigma \approx 0.3$ nm and
$r_B \approx 8 \, \mu$m we get $\rho \sim 1$ and
$\lambda \sim 10^6 \gg 1$. As a point of reference, let us note that
for the parameters of liquid helium above our Theorem \ref{t:2} yields
existence of an equilibrium configuration only for droplets whose
volume corresponds to a ball of radius greater than $\sim 10 \, \mu$m
even with just two point charges.

%

The case of the main physical interest corresponds to that of the
volume of the charged drop becoming macroscopically large
($m \to \infty$), while the number of charges $N$ simultaneously tends
to infinity with a suitable rate. To study this regime, we can carry
out another rescaling in which the volume is instead normalized to a
constant while the radius of the solvation sphere
vanishes. Introducing the parameter $\eps > 0$ that will eventually be
sent to zero, we have
$E_{\rho,\lambda,N} (\eps^{-1} \rho \Omega, \eps^{-1} \rho X) =
\eps^{-2} \rho^2 E_\eps(\Omega, X)$, where
\begin{align}
  \label{eq:Eeps}
  E_\eps(\Omega, X) := P(\Omega) + \gamma \eps^3 \sum_{i
  = 1}^{N_\eps-1} \sum_{j=i+1}^{N_\eps} {1 
  \over |x_i - x_j|}, 
\end{align}
$B_\eps(x_i) \subset \Omega$ are disjoint for all
$1 \leq i \leq N_\eps$, and $\gamma := \lambda / \rho^3$ is a single
dimensionless parameter that characterizes the physical properties of
the liquid and is kept fixed throughout the analysis. The
considerations following \eqref{eq:lambdarho} motivate us to focus on
the physically most relevant regime of $\gamma \gtrsim 1$. The
assumptions on the dependence of $N_\eps \to \infty$ on $\eps \to 0$
that yield information about the equilibrium shape of the charged
drops turn out to be non-trivial and will be specified in the
following sections.

\section{Main results}
\label{sec:main-results}

We now state the main results of our paper concerning the minimizers
of the energy $E_{\rho,\lambda,N}$ and its rescaled version
$E_\eps$. We begin by defining the admissible class
$\mathcal A_{m, N, \rho}$ of configurations consisting of a set of
finite perimeter $\Omega \subset \R^3$ of volume $m > 0$ and
$N \in \mathbb N$ non-overlapping charges of radius $\rho > 0$
contained in $\Omega$, whose centers are collected into a discrete
set $X \subset \R^3$:
\begin{equation}
  \label{AmNr}
  \begin{aligned}
    \mathcal A_{m,N,\rho} := \{ & (\Omega, X) \ : \\
    & \Omega \subset \mathbb R^3 \ \text{measurable}, \ |\Omega| = m, 
    \ P(\Omega) <
    \infty, \\
    & X = \cup_{i=1}^N \{x_i\}, \ (x_i)_{i=1}^N \in \mathbb
    R^3, \\
    & |\Omega \cap B_\rho(x_i)| = |B_\rho(0)| \ \text{for all} \ 1
    \leq
    i \leq N,  \\
    & B_\rho(x_i) \cap B_\rho(x_j) = \varnothing \ \text{for all} \ 1
    \leq i < j \leq N\}.
\end{aligned}
\end{equation}
An example of an admissible configuration is shown in
Fig. \ref{f:example}. Notice that the set $\cup_{i=1}^N B_\rho(x_i)$
representing the charges is assumed to be contained inside the set
$\Omega$ in the measure theoretic sense.

\begin{figure}
    \centering
\includegraphics[scale=1.1]{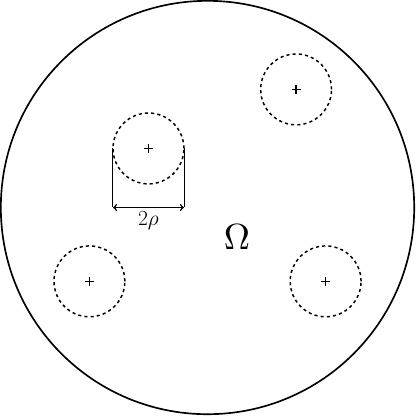}
\caption{Schematics of an admissible configuration consisting of
  $N = 4$ charges indicated with ``+'' when $\Omega$ is a ball.}
  \label{f:example}

\end{figure}

We would like to investigate under which conditions the energy
$E_{\rho, \lambda, N}$ admits a minimizer in the class
$\mathcal A_{m, N, \rho}$. Notice that the question of existence of
such minimizers is far from obvious because of the possibility of
splitting of the set $\Omega$ into disjoint pieces that carry the
charges apart to lower the Coulombic energy at the expense of
increasing the interfacial energy. This issue is well known in the
studies of geometric variational problems with competing interactions
\cite{cmt:nams17}. In the context of Gamow's liquid drop model, it was
shown that an appropriate extension of the notion of minimizers for
this kind of problems is given by {\em generalized minimizers}
\cite{kmn:cmp16}. In our problem, these are defined as follows.

\begin{definition}
  \label{d:gen}
  Let $\rho,\,\lambda>0$, $N\in\mathbb N$ and
  $m\ge \frac{4\pi}{3} N \rho^3$. Suppose there exists
  $K\in\mathbb N$, $m_k>0$ and $N_k\in\mathbb N \cup \{0\}$ with
  $m=\sum_{k=1}^K m_k$, $N=\sum_{k=1}^K N_k$, and a family of
  minimizers $(\Omega_k,X_k)\in \mathcal A_{m_k,N_k,\rho}$ of
  $ E_{\rho,\lambda,N_k}$ which satisfies
  \begin{align}
    \sum_{k=1}^K  E_{\rho,\lambda,N_k}(\Omega_k, X_k) = \inf_{(\Omega,
    X) \in \mathcal A_{m,N,\rho}}  E_{\rho,\lambda,N}(\Omega, X).
  \end{align}
  Then the family of $(\Omega_k, X_k)$ is called a generalized
  minimizer of $E_{\rho, \lambda, N}$ over $\mathcal A_{m,N,\rho}$. 
\end{definition}

Intuitively, a generalized minimizer can be thought of as a {\em
  finite} collection of droplets containing all the charges, with each
droplet being a minimizer for the charge it contains and different
droplets being ``infinitely far apart'' and thus not interacting. Each
set $\Omega_k$ in a generalized minimizer is referred to as a {\em
  component}. Notice that a generalized minimizer is simply a
minimizer if and only if it has only one component.  An illustration
of a classical minimizer of $E_{\rho, \lambda, N}$ with $N = 7$ is
presented in Fig. \ref{f:class}, while a possible generalized
minimizer is shown in Fig. \ref{f:gen}. Our first result establishes
existence of generalized minimizers for all nontrivial values of the
parameters.

\begin{figure}
    \centering
     
\includegraphics[scale=1.1]{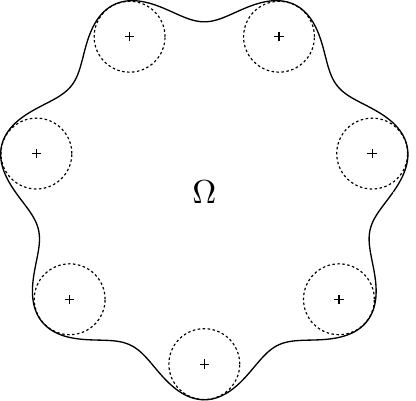}

\caption{A schematic of a classical minimizer for $N = 7$. }
\label{f:class}
\end{figure}

\begin{figure}
    \centering
     
\includegraphics[scale=1.1]{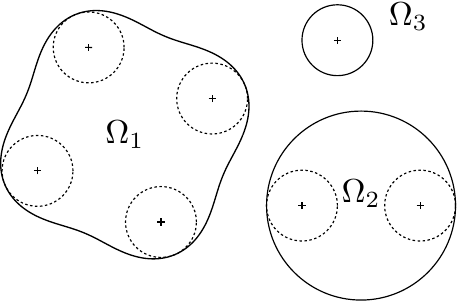}

\caption{A schematic of a generalized minimizer with $K = 3$ for
  $N = 7$.}
\label{f:gen}
\end{figure}

\begin{theorem}\label{exgen}
  Let $\rho,\,\lambda>0$, $N\in\mathbb N$ and
  $m\ge \frac{4\pi}{3} N \rho^3$. Then there exists a generalized
  minimizer of $E_{\rho, \lambda, N}$ over $\mathcal A_{m,N,\rho}$.
  Moreover, each component of the generalized minimizer has boundary
  of class $C^{1,1}$, is bounded and connected, and away from the set
  of charges is smooth and has constant mean curvature that is the
  same for all the components.
\end{theorem}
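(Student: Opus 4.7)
The plan is to apply the direct method of the calculus of variations combined with a concentration-compactness analysis, organized inductively on the number of charges $N$, and then to deduce regularity and connectedness of each component from its minimality.

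First I would verify that the infimum is finite by exhibiting a ball of volume $m$ containing $N$ disjoint solvation balls as a competitor, and take a minimizing sequence $(\Omega_n,X_n)\in\mathcal A_{m,N,\rho}$. The energy bound gives $P(\Omega_n)\le C$, and since each charged component of $\Omega_n$ carries at least one solvation ball of volume $\tfrac{4\pi}{3}\rho^3$, the relative isoperimetric inequality bounds the total number of components of non-negligible volume uniformly in $n$. Labelling these clusters $k=1,\dots,K$ and translating so that one distinguished charge in the $k$-th cluster sits at the origin, a diagonal subsequence of each translated cluster converges in $L^1_{\mathrm{loc}}$ to a set of finite perimeter $\Omega_k\subset\R^3$ carrying $N_k$ charges, with $\sum_k N_k=N$ and the inter-cluster distances diverging. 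Lower semicontinuity of the perimeter, Fatou applied to the intra-cluster Coulomb sums, and the vanishing of inter-cluster Coulomb interactions at infinity give
\begin{align}
\sum_{k=1}^K E_{\rho,\lambda,N_k}(\Omega_k,X_k) \;\le\; \lim_{n\to\infty} E_{\rho,\lambda,N}(\Omega_n,X_n).
\end{align}

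The main obstacle is establishing mass balance and the minimizing property of each cluster. A priori, some positive volume $\delta$ could disperse into pieces of vanishing size; however, by the relative isoperimetric inequality, any such dispersion would blow up the perimeter, contradicting the energy bound. Hence $\sum_k|\Omega_k|=m$, possibly after including uncharged components $(N_k=0)$ corresponding to residual volume that travels to infinity. To see that each $(\Omega_k,X_k)$ actually minimizes $E_{\rho,\lambda,N_k}$ over $\mathcal A_{m_k,N_k,\rho}$ with $m_k:=|\Omega_k|$, I would induct on $N$: for $N\le 1$ direct compactness yields a classical minimizer, while for $N\ge 2$ any strict improvement on a single cluster, inserted in place of $(\Omega_k,X_k)$ with the remaining clusters translated further apart, would produce an admissible configuration with energy strictly below the infimum. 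The inductive hypothesis guarantees that the improved cluster itself admits a generalized minimizer, which can then be substituted component by component.

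Regularity of each $\Omega_k$ follows from its being a volume-constrained quasi-minimizer of perimeter against the smooth obstacles $B_\rho(x_i)\subset\Omega_k$: on compactly supported volume-preserving perturbations disjoint from the solvation balls, the Coulomb term is smooth and contributes only a bounded Lipschitz bulk perturbation, so the classical Almgren--De~Giorgi--Tamanini theory yields $C^{1,\alpha}$ regularity of $\partial\Omega_k$ away from the contact set; the obstacle-problem regularity theory for perimeter minimizers with smooth obstacles of bounded mean curvature then upgrades this to the optimal $C^{1,1}$ regularity at contact, hence globally. Connectedness is a direct consequence of minimality: any disconnection of $\Omega_k$ with charges on both sides could be relaxed by translating the two pieces apart, preserving perimeter and volume while strictly decreasing the Coulomb sum; any disconnection with an uncharged piece could be improved by an isoperimetric rearrangement absorbing that piece into a charged one with strictly smaller perimeter. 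Either case contradicts the minimality of $(\Omega_k,X_k)$, so each component must be connected.
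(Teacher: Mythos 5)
Your high-level strategy (minimizing sequence, cluster the charges by mutual distances, pass to the limit cluster-by-cluster, identify each limit as a minimizer, then obstacle-problem regularity) is the same skeleton the paper uses, but you skip the step that carries most of the weight and it leaves a genuine gap. After translating the $k$-th cluster so that a distinguished charge sits at the origin, you invoke $L^1_{\rm loc}$ convergence to get a limit set $\Omega_k$, and then assert mass balance $\sum_k|\Omega_k|=m$ modulo finitely many uncharged residual components. But $L^1_{\rm loc}$ convergence does not preserve volume, and the minimizing sequence can lose mass in ways your isoperimetric argument does not control: the isoperimetric inequality rules out mass dispersing into infinitely many \emph{shrinking} pieces (the ``vanishing'' alternative), but it does not prevent a single $\Omega_n$ from stretching so that the part far from the charges escapes to infinity with bounded perimeter, nor does it prevent the residual mass from fragmenting into infinitely many pieces $a_j$ of comparable size with $\sum_j a_j^{2/3}$ bounded and $\sum_j a_j = m$. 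Without further work, $K$ could be infinite and $\sum_k|\Omega_k|<m$, so the object you construct is not a generalized minimizer in the paper's sense.

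The paper fills exactly this gap by constructing a \emph{modified} minimizing sequence before passing to the limit: it cuts each translated cluster at a radius $R_n$ chosen so the slicing surface has small measure (the averaging argument over $R\in(\widetilde R/2,\widetilde R)$), collects all the cut-away mass into a single ball $\Omega_n^0$, and then replaces each truncated cluster by a perimeter minimizer containing the solvation balls. This replacement is what allows it to invoke the Almgren-type vector-field argument (Lemma~\ref{field}) and the uniform density estimate (Lemma~\ref{density}), from which it deduces that each modified cluster is contained in a fixed ball $B_{R_\infty}(0)$. Only then is $BV$-compactness applicable without mass loss, and the number of components is finite by construction ($K\le N$ charged pieces plus at most one ball). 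Your proposal does not perform this truncation-and-replacement step and does not prove (or cite) the density estimate, so the key compactness claim is unsupported. The regularity and connectedness statements at the end are essentially what the paper asserts (volume-constrained perimeter minimizer away from obstacles, obstacle-problem $C^{1,1}$ at contact), though you should note that the Coulomb term depends only on $X$ and hence does not perturb the perimeter functional at all for perturbations of $\Omega$ with $X$ fixed -- there is no ``Lipschitz bulk perturbation'' to contend with.
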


In view of the regularity of the components of generalized minimizers,
in the following we always refer to the regular representatives when
talking about the energy minimizing sets. In particular, we can choose
these sets to be open.

We next establish a parameter regime in which the generalized
minimizers are also classical, i.e., when there is a minimizer of
$E_{\rho, \lambda, N}$ over $\mathcal A_{m,N,\rho}$. Naturally, as the
most interesting case to consider is that of many charges, we will
instead work with the energy $E_\eps$ defined in \eqref{eq:Eeps} and
minimize it over the class $\mathcal A_\eps$ obtained as a suitable
modification of the definition in \eqref{AmNr} corresponding to sets
of volume $m = {4 \pi \over 3}$ of a unit ball (without loss of
generality) containing $N_\eps \gg 1$ charges of radius $\eps \ll 1$:
\begin{align}
  \label{eq:Aeps}
  \mathcal A_\eps := \mathcal A_{{4 \pi \over 3}, N_\eps, \eps}.
\end{align}

Notice that existence vs. non-existence of classical minimizers in the
class $\mathcal A_\eps$ for $\eps \ll 1$ must clearly depend on the
rate of $N_\eps \to \infty$ as $\eps \to 0$. To begin with, due to the
constraint $B_\eps(x_i) \cap B_\eps(x_j) = \varnothing$ for
$i \not= j$ we must have $N_\eps \lesssim \eps^{-3}$ in order for the
admissible class $\mathcal A_\eps$ to be non-empty, limiting the
possible growth rate of $N_\eps$. On the other hand, for $\Omega$
fixed and $\eps$ sufficiently small depending on $N_\eps \gg 1$, one
would be able to approximate
$\inf_{X \subset \Omega} E_\eps(\Omega, X)$ by
\begin{align}
  \label{eq:Econt}
  E_0(\Omega) := P(\Omega) + {q^2 \over 2} \inf_{\mu(\Omega)
  = 1} \int_\Omega \int_\Omega {d \mu(x) \, d \mu(y) \over |x - y|},
\end{align}
with $q = \gamma^{\frac12} \eps^{\frac32} N_\eps$, which is nothing
but the dimensionless continuum energy in \eqref{eq:E0}. Nevertheless,
this energy is known to give
$\inf_{|\Omega| = {4 \pi \over 3}} E_0(\Omega) = 4 \pi$ for all
$q \geq 0$, thus failing to yield a minimizer for any $q > 0$
\cite{goldman15}. Therefore, it would be natural to expect existence
of minimizers of $E_\eps$ over $\mathcal A_\eps$ only for
$N_\eps \ll \gamma^{-\frac12} \eps^{-\frac32}$. Still, the threshold
$N_\eps$ for existence of minimizers in this regime is far from
obvious.

We begin with the following existence result, which shows that for
$\gamma \gtrsim 1$ classical minimizers exist as soon as
$N_\eps \lesssim \gamma^{-1} \eps^{-1}$ and all $\eps > 0$
sufficiently small universal.

\begin{theorem}
  \label{t:exist}
  There exist universal constants $\eps_0 > 0$, $\gamma_0 > 0$ and
  $C>0 $ such that for all $\gamma > \gamma_0$ and
  $1 < N_\eps < \frac{C}{\varepsilon \gamma}$ there exists a minimizer
  of $E_\eps$ over $\mathcal A_\eps$ for all $\eps \in (0,
  \eps_0)$. Furthermore, if $(\Omega, X) \in \mathcal A_\eps$ is a
  minimizer of $E_\eps$ then
  $\mathrm{dist}(x_i, \partial \Omega) = \eps$ and
  $\mathrm{dist}(x_i, X \backslash x_i) \geq c \gamma \eps$ for all
  $x_i \in X$ with $1 \leq i \leq N_\eps$ and $c > 0$ universal.
\end{theorem}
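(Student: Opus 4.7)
My plan is to derive existence of a classical minimizer from the abstract existence of a generalized minimizer provided by Theorem \ref{exgen}, by showing that under the stated hypotheses any such generalized minimizer must consist of a single component. Fix a generalized minimizer $(\Omega_k,X_k)_{k=1}^K$ with $N_k:=|X_k|$ and $m_k:=|\Omega_k|$, and argue by contradiction assuming $K\ge 2$. As a one-component competitor in $\mathcal A_\eps$ I take the unit ball $B_1$ together with $N_\eps$ charges placed on the sphere of radius $1-\eps$ by a quasi-uniform spherical packing (e.g., a Delone configuration with minimum angular separation of order $N_\eps^{-1/2}$); a standard averaging estimate yields $\sum_{i<j}|x_i^{\rm tr}-x_j^{\rm tr}|^{-1}\le C_1 N_\eps^2$ for some universal $C_1$, so that
\[
\inf_{(\Omega,X)\in\mathcal A_\eps}E_\eps(\Omega,X)\le E_\eps(B_1,X^{\rm tr})\le 4\pi+C_1\gamma\eps^3 N_\eps^2\le 4\pi+C_1 C^2\eps/\gamma,
\]
using $N_\eps<C/(\gamma\eps)$ in the last step.

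Let $N_1:=\min_k N_k$, so $1\le N_1\le N_\eps/2$. The central estimate is a sharp lower bound on the energy of the smallest component. Combining the packing constraint $m_1\ge\tfrac{4\pi}{3}\eps^3 N_1$ (hence $P(\Omega_1)\ge 4\pi\eps^2 N_1^{2/3}$ by isoperimetry) with the diameter-based lower bound $\sum_{i\ne j}|x_i-x_j|^{-1}\ge N_1(N_1-1)/(2\,\mathrm{diam}\,\Omega_1)$, and optimizing the trade-off between perimeter and Coulomb self-energy in $m_1$, one obtains
\[
E_\eps(\Omega_1,X_1)\ge\begin{cases}4\pi\eps^2,&N_1=1,\\c_2\,\gamma^{2/3}\eps^2 N_1^{4/3},&N_1\ge 2,\end{cases}
\]
for a universal $c_2>0$, valid for $\gamma$ bounded away from zero. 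On the other hand, the Coulombic saving realized by splitting off the $N_1$ charges is bounded above by the trial cross-interaction $\gamma\eps^3\sum_{x_i\in X_1,\,x_j\in X^{\rm tr}\setminus X_1}|x_i-x_j|^{-1}\le C_1\gamma\eps^3 N_1 N_\eps\le C_1 C\eps^2 N_1$. Comparing cost against saving yields $4\pi>C_1 C$ in the $N_1=1$ case and $c_2\gamma^{2/3}N_1^{1/3}>C_1 C$ in the $N_1\ge 2$ case, i.e.\ $\gamma>(C_1 C/(c_2\cdot 2^{1/3}))^{3/2}$; choosing $C<4\pi/C_1$ as a universal constant and $\gamma_0$ to be the latter threshold gives the desired contradiction, so $K=1$ and a classical minimizer exists.

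The main technical hurdle will be making the cross-interaction saving rigorous, since one must verify that the self-Coulomb of the big components essentially cancels the self-Coulomb of the trial, leaving only the cross-interaction term as residual. This should follow from a Thomson-type capacity lower bound $\sum_{i<j,\,i,j\in X_k}|x_i-x_j|^{-1}\ge c_3 N_k^2/R_k$ for each $k\ne 1$, combined with quantitative isoperimetric stability: since $P(\Omega_k)$ differs from the isoperimetric value by at most $O(\eps/\gamma)$ under the established energy bound, each $\Omega_k$ is close to a ball and its diameter is bounded by $2R_k+o(1)$, giving the required cancellation against the trial's uniform-distribution upper bound on the same subset of charges.

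For the additional properties of a classical minimizer $(\Omega,X)$, I would use first-variation arguments. The equality $\mathrm{dist}(x_i,\partial\Omega)=\eps$ follows from the fact that the potential $U_i(x):=\sum_{j\ne i}|x-x_j|^{-1}$ is harmonic on the open feasible region for $x_i$; by the maximum principle $U_i$ has no interior local minimum there, so $x_i$ must saturate the constraint $B_\eps(x_i)\subset\Omega$ against $\partial\Omega$. The lower bound $\mathrm{dist}(x_i,X\setminus\{x_i\})\ge c\gamma\eps$ follows by considering the perturbation that moves a minimal-distance charge $x_i$ (with $|x_i-x_j|=d$) to a "void" position $x_*\in\Omega$ where $B_\eps(x_*)\subset\Omega$ is disjoint from the other $B_\eps(x_k)$; since the forbidden region has volume $\lesssim\eps P(\Omega)+N_\eps\eps^3\lesssim\eps\ll|\Omega|$ and $\int_\Omega U_i\,dx/|\Omega|\le 6 N_\eps$ by direct integration, such $x_*$ exists with $\sum_{k\ne i}|x_*-x_k|^{-1}\le C_4 N_\eps$. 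The resulting energy change is at most $\gamma\eps^3(C_4 N_\eps-1/d)$, which must be nonnegative at the minimum, forcing $d\ge 1/(C_4 N_\eps)\ge\gamma\eps/(C_4 C)$.
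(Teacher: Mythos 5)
Your high-level strategy---deduce existence from Theorem \ref{exgen} by showing that under the stated hypotheses a generalized minimizer has only one component---is the same as the paper's.  However, you propose to reach a contradiction by a \emph{global} comparison of the generalized minimizer against the unit-ball trial configuration, whereas the paper instead proves (through a chain of structural lemmas: Lemma \ref{Lemma_2}, Lemma \ref{Lem_New}, Lemma \ref{2}, Lemma \ref{density_est}, Lemma \ref{Min_Component}, Lemma \ref{Min_class}) that for $\gamma>\gamma_0$ every generalized minimizer has the explicit form $\{(\Omega_1,X_1),(B_\eps(0),\{0\})^{k-1}\}$, and then derives a contradiction by a \emph{local} modification: absorbing one stray $\eps$-ball back into the big component at a favorable location, which saves $\gtrsim \eps^2$ of perimeter while costing only $\lesssim\gamma\eps^3 N_\eps$ in Coulomb energy.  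Your approach is genuinely different, but it has two unresolved gaps that in my reading are not minor.

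First, the claimed lower bound $E_\eps(\Omega_1,X_1)\geq c_2\gamma^{2/3}\eps^2 N_1^{4/3}$ for the smallest component is not justified.  The isoperimetric bound gives only $P(\Omega_1)\gtrsim m_1^{2/3}$ and the diameter-based Coulomb bound gives only $V_\eps(X_1)\gtrsim\gamma\eps^3 N_1^2/\mathrm{diam}(\Omega_1)$; \emph{optimizing in $m_1$} as you suggest would give your exponents, but only after also assuming $\mathrm{diam}(\Omega_1)\lesssim m_1^{1/3}$, i.e.\ that the small component is ``round.''  That is exactly what Lemma \ref{Min_Component} proves, and it requires the density estimate of Lemma \ref{density_est} together with minimality; without it, a very elongated component has small Coulomb energy and a perimeter not controlled by the diameter, so the claimed lower bound may simply fail.  (The paper instead sidesteps the need for this by showing in Lemma \ref{Min_class} that all small components carry a single charge, hence are single $\eps$-balls with trivial energy $4\pi\eps^2$.)

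Second, and more seriously, the ``Coulombic saving'' comparison is circular as stated.  Comparing the generalized-minimizer energy $\sum_k E_\eps(\Omega_k,X_k)$ against the ball-trial energy $4\pi+V_\eps(X^{\rm tr})$ requires showing that $\sum_{k\geq 2}\bigl(P(\Omega_k)+V_\eps(X_k)\bigr)-4\pi-V_\eps\bigl(X^{\rm tr}\setminus X_1^{\rm tr}\bigr)$ is not too negative, i.e.\ that the remaining components can't do substantially better than the part of the trial they replace.  Your third paragraph acknowledges that this ``cancellation'' is the hurdle, but the proposed resolution (Thomson-type capacity bound plus quantitative isoperimetric stability) is not carried out, and it is not obvious it can be, because the remaining components of the generalized minimizer may have arbitrary mass and charge partitions, and a direct term-by-term comparison with the ball trial does not hold.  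This is precisely why the paper avoids a global comparison altogether and works with a one-step local modification, for which the perimeter change and the Coulomb change can both be estimated explicitly.

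The last part of your proposal, on the properties of a classical minimizer, is in much better shape.  The harmonicity/max-principle argument for $\mathrm{dist}(x_i,\partial\Omega)=\eps$ is exactly the paper's.  For the separation bound $\mathrm{dist}(x_i,X\setminus x_i)\geq c\gamma\eps$, your perturbation to a low-potential ``void'' position would work (provided one verifies the tube estimate for the forbidden region, which is fine once the $C^{1,1}$ regularity from Theorem \ref{exgen} is in hand), but the paper obtains the same conclusion more simply: the evaporation estimate \eqref{eq:evapor} immediately gives $\gamma\eps^3\sum_{j\neq i}|x_i-x_j|^{-1}\leq 8\pi\eps^2$, hence $|x_i-x_j|\geq\gamma\eps/(8\pi)$ for all $j\neq i$.
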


We note that one of the conclusions of the above theorem is that all
the balls $B_\eps(x_i)$ containing the charges $x_i \in X$ in a
minimizer touch the drop boundary $\partial \Omega$. This is
consistent with the expectation at the level of the continuum that the
measure $\mu$ minimizing the Coulombic energy in \eqref{eq:Econt} is
supported on $\partial \Omega$. Furthermore, we find that in this
regime the charges are uniformly separated from one another at scale
$O(\gamma \eps)$ which exceeds that imposed by the constraint
$B_\eps(x_i) \cap B_\eps(x_j) = \varnothing$ for $i \not= j$.

Surprisingly, the existence threshold in Theorem \ref{t:exist} is
considerably lower than
$N_\eps \sim \gamma^{-\frac12} \eps^{-\frac32}$ for which the
Coulombic energy matches the perimeter in the continuum as
$\eps \to 0$, see \eqref{eq:Econt}. Nevertheless, this is not simply a
limitation of our analysis, as we demonstrate with our next
non-existence result. To give some heuristics for the threshold
appearing in Theorem \ref{t:exist}, consider the basic mechanism in
which a drop may lose its energy minimizing property by {\em
  evaporating} a single charge
\cite{iribane76,fenn93,labowsky98,kebarle00}. If $(\Omega, X)$ is a
minimizer of $E_\eps$, then $(\Omega', X')$ with
$\Omega' = (\Omega \backslash B_\eps(x_i)) \cup B_\eps(R e_1)$ and
$X' = (X \backslash \{x_i\}) \cup \{R e_1\}$, obtained by cutting a
single ball $B_\eps(x_i)$ with a charge in its center and sending it
far off, is an admissible configuration. Here $e_1$ is the unit vector
along the first coordinate direction, $x_i \in X$ with
$1 \leq i \leq N_\eps$ arbitrary, and $R > 0$ is sufficiently large.
Letting $R \to \infty$, we then conclude that
\begin{align}
  \label{eq:evapor}
  E_\eps(\Omega, X) \leq E_\eps(\Omega', X') \leq E_\eps(\Omega, X) +
  8 \pi \eps^2 - \gamma \eps^3 \sum_{j \not= i} {1 \over |x_i -
  x_j|},
\end{align}
which implies that
\begin{align}
  \label{eq:diamNeps}
  \mathrm{diam}(\Omega) \geq C \gamma \eps N_\eps,
\end{align}
for some $C > 0$ universal and all $N_\eps > 1$.

We would expect that at least whenever the perimeter is not
overwhelmed by the Coulombic energy the diameter of a minimizer of
$E_\eps$, if it exists, should not greatly exceed that of a unit ball
corresponding to the mass constraint. From this and
\eqref{eq:diamNeps}, we immediately get a contradiction if
$N_\eps \gg \gamma^{-1} \eps^{-1}$, suggesting that in this regime the
existence should fail, provided that the perimeter term indeed
dominates the Coulombic energy. For the latter, we can consider a
competitor of the form $(\Omega, X)$, where
$\Omega = B_r(0) \cup_{i=1}^{N_\eps} B_\eps(i R e_1)$ and
$X = \cup_{i=1}^{N_\eps} \{i R e_1\}$, for $r^3 + \eps^3 N_\eps = 1$
with $\eps \ll 1$ and $R \gg 1$, corresponding to {\em all} charges
evaporated from the drop. This yields
$\inf_{\mathcal A_\eps} E_\eps \leq 4 \pi (r^2 + \eps^2 N_\eps)$ by
sending $R \to \infty$. Thus, we have
$\inf_{\mathcal A_\eps} E_\eps \lesssim 1$ whenever
$N_\eps \lesssim \eps^{-2}$ and $\eps \ll 1$ independently of
$\gamma$, and the isoperimetric deficit becomes small when
$N_\eps \ll \eps^{-2}$.

Under the condition of smallness of $\eps^2 N_\eps$, we now get our
non-existence result that yields a sharp scaling for the threshold
value of $N_\eps$ with $\gamma \gtrsim 1$ for $\eps \ll 1$.

\begin{theorem}
  \label{t:non}
  Let $\gamma > \gamma_0$, where $\gamma_0$ is as in Theorem
  \ref{t:exist}. Then there exists a universal constant $C>0$ and
  constants $\varepsilon_0, \delta_0 >0$ depending only on $\gamma$
  such that if $\varepsilon \in (0, \varepsilon_0)$ and
  $\frac{C}{\gamma \varepsilon } <N_\eps <
  \frac{\delta_0}{\varepsilon^2}$ then $E_\eps$ does not attain its
  infimum in $\mathcal A_\eps$.
\end{theorem}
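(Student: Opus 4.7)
The plan is to argue by contradiction: suppose $(\Omega, X) \in \mathcal{A}_\varepsilon$ is a minimizer of $E_\varepsilon$ with $N_\varepsilon$ in the stated range. By Theorem~\ref{exgen} applied to this single-component configuration, $\Omega$ is connected and $\partial\Omega$ is $C^{1,1}$. First I would establish the \emph{single-charge evaporation inequality} exactly as sketched in the heuristic \eqref{eq:evapor}: for each fixed $i$, the competitor obtained by replacing $B_\varepsilon(x_i)$ with $B_\varepsilon(R e_1)$ and $x_i$ with $R e_1$ lies in $\mathcal{A}_\varepsilon$, has perimeter at most $P(\Omega) + 8\pi\varepsilon^2$, and its Coulombic interactions involving the displaced charge tend to zero as $R \to \infty$. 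The resulting bound, valid for every $i$, is
\begin{equation*}
\sum_{j \neq i} \frac{1}{|x_i - x_j|} \leq \frac{8\pi}{\gamma \varepsilon}.
\end{equation*}
Taking $j$ to be the charge farthest from $x_i$ gives $\mathrm{diam}(\Omega) \geq \mathrm{diam}(X) \geq \frac{\gamma\varepsilon(N_\varepsilon-1)}{8\pi}$, which can be made as large as we wish by choosing the universal constant $C$ large.

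Next I would produce the \emph{full-evaporation upper bound} by testing against the disconnected competitor $B_r(0) \sqcup \bigsqcup_i B_\varepsilon(y_i)$ with $r^3 + N_\varepsilon \varepsilon^3 = 1$ and $|y_i| \to \infty$, whose limiting energy is $4\pi r^2 + 4\pi\varepsilon^2 N_\varepsilon \leq 4\pi(1 + \varepsilon^2 N_\varepsilon) \leq 4\pi(1+\delta_0)$. Thus $P(\Omega) \leq 4\pi(1 + \delta_0)$, and the Fusco--Maggi--Pratelli quantitative isoperimetric inequality yields
\begin{equation*}
|\Omega \triangle B_1(x_0)| \leq C\sqrt{\delta_0}
\end{equation*}
for some $x_0 \in \mathbb{R}^3$. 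In particular, $|\Omega \setminus B_1(x_0)| \leq \tfrac{1}{2}C\sqrt{\delta_0}$.

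The third ingredient is a \emph{uniform interior-ball condition} on $\partial\Omega$. The Euler--Lagrange equation for the minimizer says that away from the solvation balls, $\partial\Omega$ is a constant mean curvature surface with Lagrange multiplier $\lambda$. Comparing $P(\Omega) \approx 4\pi$ to $|\Omega| = 4\pi/3$ (e.g.\ through the first variation of the volume constraint, or directly by testing against dilations) gives a universal bound $|\lambda| \leq M$, and hence a uniform lower bound $r_0 \geq c > 0$ on the interior-ball radius of $\partial\Omega$ (using the $C^{1,1}$ regularity from Theorem~\ref{exgen} together with the fact that the obstacle balls have radius $\varepsilon \ll r_0$ and therefore do not spoil the curvature bound at scale $r_0$).

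Finally I would combine these three ingredients in the following \emph{tendril argument}. Since $\mathrm{diam}(\Omega) \geq c\gamma\varepsilon N_\varepsilon$ is much larger than $2$, there is a point $p \in \overline{\Omega}$ with $|p - x_0| \geq \tfrac{1}{4}\gamma\varepsilon N_\varepsilon$, and by connectedness of $\Omega$ there is a continuous curve in $\Omega$ joining $p$ to $x_0$ of length $L \gtrsim \gamma\varepsilon N_\varepsilon$. The uniform interior-ball condition implies that a $r_0$-tubular neighborhood of this curve lies inside $\Omega$, so the portion of $\Omega$ outside $B_{1+r_0}(x_0)$ has volume at least $c\, r_0^2\, \gamma\varepsilon N_\varepsilon$. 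Comparing with $|\Omega \setminus B_1(x_0)| \leq C\sqrt{\delta_0}$ gives
\begin{equation*}
c\, r_0^2\, \gamma \varepsilon N_\varepsilon \leq C\sqrt{\delta_0},
\end{equation*}
i.e.\ $N_\varepsilon \leq C'\sqrt{\delta_0}/(r_0^2 \gamma\varepsilon)$. Choosing $\delta_0 = \delta_0(\gamma)$ small enough that $C'\sqrt{\delta_0}/r_0^2 < C$ contradicts the standing assumption $N_\varepsilon > C/(\gamma\varepsilon)$. I expect the main technical obstacle to be a careful justification of the uniform interior-ball radius and of the quantitative volume-length inequality for the tendril: both rely on the $C^{1,1}$ regularity of $\partial\Omega$ being uniform in the parameters, which in turn requires a universal bound on the Lagrange multiplier, and the tube/volume lower bound has to be established even though the tendril need not be a tube in a precise sense.
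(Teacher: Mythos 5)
Your proposal takes a genuinely different route from the paper, but it has a real gap at the interior-ball step that the remaining steps depend on.

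The paper proves non-existence by first establishing the containment $\Omega\subset B_2(x)$ (Lemma~\ref{Outter_Ball}, which is itself the substantial geometric input), and then invoking a lower bound on the discrete Riesz energy of $N_\eps$ points in a bounded region (Wagner's estimate, \cite{Wagner90}): this gives $V_\eps(X)\gtrsim \gamma\eps^3N_\eps^2$, which together with the upper bound $E_\eps\le 4\pi(1+\eps^2N_\eps)$ from Lemma~\ref{lem1} forces $N_\eps\lesssim 1/(\gamma\eps)$ and yields the contradiction. Your evaporation inequality and the quantitative isoperimetric bound are correct and match the paper's preliminaries, and your diameter lower bound is fine. But the uniform interior-ball claim is not correct: the minimizer's boundary contains portions of the obstacle spheres $\partial B_\eps(x_i)$, where the only interior tangent ball is $B_\eps(x_i)$ itself, so the optimal interior-ball radius is $\eps$, not a universal $r_0>0$. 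Likewise the $C^{1,1}$ regularity of Theorem~\ref{exgen} carries a constant that degenerates as $\eps\to 0$. And even if you restrict attention to the free boundary, a bounded mean curvature $\lambda$ does \emph{not} bound both principal curvatures; CMC surfaces of Delaunay type can have necks of radius as small as $\eps$ while keeping $H$ bounded, as the paper itself exploits in Section~\ref{s:two}. Consequently a connected component reaching out to distance $\sim\gamma\eps N_\eps$ can consist of a thin string of obstacle balls joined by pinched CMC necks with thickness $\sim\eps$, whose volume is $\sim\eps^2\cdot\mathrm{diam}$ rather than $\sim r_0^2\cdot\mathrm{diam}$, and the contradiction with $|\Omega\setminus B_1(x_0)|\lesssim\sqrt{\delta_0}$ evaporates. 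This is exactly the scenario the paper handles, painfully, by slicing the tendril at scale $\eps$ and using the density estimate of Lemma~\ref{density_est} to lower-bound the slice volume, and by a separate cutting-and-rescaling step to also sharpen the $L^1$ upper bound from $\sqrt{\delta_0}$ down to $\eps^3/(\delta^3\gamma^3)$. To salvage your approach you would either need to prove a thickness lower bound for the tendril that is robust to obstacle-dominated necks (which is essentially Lemma~\ref{Outter_Ball}), or replace the volume comparison with an energy comparison as the paper does.
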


We note that by \eqref{eq:diamNeps} the minimizer is expected to be
highly elongated for $N_\eps \gtrsim \eps^{-2}$, if it exists. Thus,
although we do not believe minimizers could exist in this Coulombic
dominated regime far beyond Rayleigh instability, i.e., for all
$N_\eps \gg \gamma^{-\frac12} \eps^{-\frac32}$, a different
approach would be needed to rule out existence of minimizers in this
regime.

We now turn to the asymptotic behavior of the minimizers in the range
of existence given by Theorem \ref{t:exist}. In the next theorem, we
show that when the minimizers of $E_\eps$ exist for $\eps \ll 1$, they
are always nearly spherical with a uniformly distributed charge over
the boundary, as one would have expected on physical grounds. Notice
that as was already mentioned, in the regime of Theorem \ref{t:exist}
the isoperimetric deficit for minimizers vanishes as $\eps \to 0$,
which by the quantitative isoperimetric inequality implies that the
minimizers converge to balls in the $L^1$ topology after suitable
translations \cite{fusco08}. Nevertheless, a stronger control on the
deviation of the minimizer $\Omega$ from a ball is necessary to
establish convergence of the Coulombic energy and, as a result, of the
charge density, which is given by the following theorem.

\begin{theorem}
\label{t:ball}
Let $\eps_n > 0$ and $N_n \in \mathbb N$ be such that $\eps_n \to 0$
and $N_n \to \infty$ as $n \to \infty$, and
$N_n < {C \over \gamma \eps_n}$ for $\gamma > \gamma_0$, where $C$ and
$\gamma_0$ are as in Theorem \ref{t:exist}. Then if
$(\Omega_n, X_n) \in \mathcal A_{\eps_n}$ are minimizers of
$E_{\eps_n}$ and $X_n = \cup_{i=1}^{N_n} \{x_{i,n}\}$, we have, up to
translations, $\Omega_n \subset B_{1+\delta}(0)$ for all $\delta > 0$
and all $n \in \mathbb N$ large enough, and
\begin{align}
  \label{eq:measconv}
  {1 \over N_n} \sum_{i=1}^{N_n} \delta_{x_{i,n}} \rightharpoonup {1 \over 4 \pi}
  \mathcal H^2\lfloor_{\partial B_1(0)},
\end{align}
in the sense of measures, as $n \to \infty$.
\end{theorem}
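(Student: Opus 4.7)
The plan is to combine an explicit energy upper bound with the quantitative isoperimetric inequality to force $\Omega_n$ into a small neighborhood of the unit ball, then use discrete minimality to identify the limiting charge distribution as the equilibrium measure on the sphere. \textbf{Step 1: Energy upper bound.} I construct a competitor $(\tilde\Omega_n,\tilde X_n)\in\mathcal A_{\eps_n}$ by taking $\tilde\Omega_n=B_1(0)$ and placing $N_n$ charges $\tilde x_i$ in a quasi-uniform packing on $\partial B_{1-\eps_n}(0)$ with mutual separation $\gtrsim N_n^{-1/2}$, which is admissible since $N_n\eps_n^2\to 0$. The identity $\int_{\partial B_1(0)}|x-y|^{-1}\,d\mathcal H^2(y)=4\pi$ for $|x|=1$ combined with a standard Riesz-type upper bound yields $\sum_{i\neq j}|\tilde x_i-\tilde x_j|^{-1}\le CN_n^2$, so
\begin{equation*}
  E_{\eps_n}(\tilde\Omega_n,\tilde X_n)\le 4\pi + C\gamma\eps_n^3 N_n^2 \le 4\pi + C'\eps_n/\gamma
\end{equation*}
by the assumption $N_n<C/(\gamma\eps_n)$. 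Minimality of $(\Omega_n,X_n)$ then forces $P(\Omega_n)\le 4\pi+O(\eps_n)$ and $\gamma\eps_n^3\sum_{i<j}|x_{i,n}-x_{j,n}|^{-1}\le O(\eps_n/\gamma)$.

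\textbf{Step 2: Uniform convergence to a ball.} As $|\Omega_n|=4\pi/3$ and the isoperimetric deficit of $\Omega_n$ tends to zero, the quantitative isoperimetric inequality of Fusco-Maggi-Pratelli supplies translations $x_n$ with $|\Omega_n\triangle B_1(x_n)|\to 0$, and I translate $x_n$ to the origin. To upgrade this $L^1$ control to $\Omega_n\subset B_{1+\delta}(0)$ for every $\delta>0$ and all $n$ large, I use that with $X_n$ fixed, $\Omega_n$ minimizes the perimeter among sets of volume $4\pi/3$ containing the prescribed balls $B_{\eps_n}(x_{i,n})$; hence $\Omega_n$ is a $(\Lambda,r_0)$-quasi-minimizer of perimeter with a uniform $\Lambda$ controlled by the volume Lagrange multiplier, which is close to $2$ by closeness to the unit ball. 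Classical density estimates for such quasi-minimizers then promote $L^1$ convergence to Hausdorff convergence of $\partial\Omega_n$ to $\partial B_1(0)$, which gives the inclusion.

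\textbf{Step 3: Identification of the charge measure.} Let $\mu_n:=N_n^{-1}\sum_i\delta_{x_{i,n}}$. Since $\mathrm{dist}(x_{i,n},\partial\Omega_n)=\eps_n$ by Theorem \ref{t:exist} and $\partial\Omega_n\to\partial B_1(0)$ in Hausdorff distance by Step 2, every weak-$*$ subsequential limit $\mu$ of $\mu_n$ is a probability measure supported on $\partial B_1(0)$. I then show that $\mu$ minimizes $I(\nu):=\iint|x-y|^{-1}\,d\nu(x)\,d\nu(y)$ among probability measures on $\overline{B_1(0)}$. The lower bound $\liminf N_n^{-2}\sum_{i\neq j}|x_{i,n}-x_{j,n}|^{-1}\ge I(\mu)$ follows from Fatou applied to the weakly convergent product measures $\mu_n\otimes\mu_n\rightharpoonup\mu\otimes\mu$ after truncating the singularity at the diagonal. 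The matching upper bound is obtained by constructing, for any sufficiently regular test $\nu$ supported in $\overline{B_1(0)}$, admissible competitors $Y_n\subset\Omega_n$ with empirical distribution converging weakly to $\nu$, pairwise separation $\gtrsim N_n^{-1/2}\gg\eps_n$, and discrete Riesz energy converging to $I(\nu)$; minimality of $X_n$ against $Y_n$ then gives $I(\mu)\le I(\nu)$. Density in $\nu$ and uniqueness of the equilibrium measure on $\overline{B_1(0)}$ (the uniform surface measure $\frac{1}{4\pi}\mathcal H^2(\partial B_1(0))$, by constancy of its Newtonian potential) identify $\mu$, implying \eqref{eq:measconv} for the full sequence.

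\textbf{Main obstacle.} The delicate point is controlling the diagonal singularity of the Coulomb kernel when passing from the discrete sum to the continuum energy $I(\mu)$: the separation bound $|x_{i,n}-x_{j,n}|\ge c\gamma\eps_n$ of Theorem \ref{t:exist} prevents divergence but allows single terms of size $(\gamma\eps_n)^{-1}$. I plan to truncate the kernel at a threshold $\delta>0$, bound the near-diagonal part via a packing estimate (at most $O((\delta/(\gamma\eps_n))^3)$ charges lie in any ball of radius $\delta$, combined with $N_n\lesssim(\gamma\eps_n)^{-1}$), and send $\delta\to 0$ only after $n\to\infty$. The dual task of constructing $Y_n$ realizing arbitrary limit measures while respecting the non-overlap and inclusion constraints is handled through a Vitali-type partition of $\partial B_1(0)$ weighted by $\nu$ and a slight inward shift to fit inside $\Omega_n$.
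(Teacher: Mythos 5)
Your overall strategy coincides with the paper's: obtain an energy upper bound from a unit-ball competitor, use the quantitative isoperimetric inequality to get $L^1$ closeness, upgrade to outer containment, and then identify the limiting empirical measure with the equilibrium measure on $\partial B_1(0)$ via a $\Gamma$-liminf for the Riesz energy plus a matching competitor. The paper in fact compresses Steps~1--2 entirely into a citation of Lemma~\ref{Outter_Ball}, and for Step~3 invokes a known $\Gamma$-convergence result \cite[Prop.\ 2.8]{serfaty-coulomb} together with a single comparison against the unit ball with quasi-uniform surface charges, so your Step~3 is essentially the same argument unrolled (your family of competitors $Y_n$ is more than is needed: one competitor already pins down $\mu$ by uniqueness of the equilibrium measure).

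The genuine gap is in Step~2. You claim $\Omega_n$ is a $(\Lambda,r_0)$-quasi-minimizer of perimeter with a \emph{uniform} $\Lambda$, justified because ``the volume Lagrange multiplier is close to $2$ by closeness to the unit ball.'' Two problems. First, near the obstacle balls $B_{\eps_n}(x_{i,n})$ the free boundary can coincide with the obstacle, whose curvature is of order $\eps_n^{-1}\to\infty$; there is no uniform $\Lambda$-bound that survives the obstacles, so $\Omega_n$ is not a quasi-minimizer in a neighborhood of each charge (and the number of charges diverges). Second, the argument for the multiplier is circular: $L^1$ closeness to the ball (what you have at that point) does not by itself control the multiplier, and you need the multiplier bound precisely to upgrade $L^1$ to Hausdorff. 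What saves the conclusion $\Omega_n\subset B_{1+\delta}(0)$ is that one only needs the \emph{lower} volume-density estimate, which does hold at scales compatible with the distance to the charge set because the obstacle balls lie \emph{inside} $\Omega_n$; this is exactly the content of Lemma~\ref{density_est}, and the paper's Lemma~\ref{Outter_Ball} then combines this with a slicing/cutting argument (and with an a priori count of how many charges can lie far from the center) to rule out protruding pieces. Your quasi-minimizer shortcut, as stated, does not close; you should instead appeal to the lower density estimate directly (or simply cite Lemma~\ref{Outter_Ball}), since that is the tool that actually handles the obstacle constraint.

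A minor remark: in your ``Main obstacle'' discussion the packing estimate near the diagonal is unnecessary. The $\Gamma$-liminf follows from truncating $|x-y|^{-1}$ at level $M$, passing to the weak limit in the bounded kernel, and sending $M\to\infty$; the off-diagonal error is $O(M/N_n)$ and needs no control from the separation scale $\gamma\eps_n$. The separation information from Theorem~\ref{t:exist} is not used at all in the paper's measure identification step.
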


Lastly, we present an asymptotically sharp existence result for the
minimization problem in the special case of $N_\eps = 2$ charges and
$\eps \ll 1$. Actually, in this case the minimization problem admits
and explicit solution in terms of the unduloid surfaces that span the
space between the two charges. We present the rather technical details
of these solutions in section \ref{s:two}. Here instead we summarize
our existence results for minimizers of $E_\eps$ with $N_\eps = 2$ for
$\eps \ll 1$.

\begin{theorem}
  \label{t:2}
  Let $N_\eps = 2$ and $c > 0$. Then there exists $\eps_0 > 0$ such
  that for all $\eps \in (0, \eps_0)$ we have:
  \begin{enumerate}[(i)]
  \item if $c < 8 \pi$ and $\gamma < {c\over \eps}$ then there exists
    a unique, up to translations and rotations, minimizer of $E_\eps$
    in $\mathcal A_\eps$.
    
  \item if $c > 8 \pi$ and $\gamma > {c\over \eps}$ then there is no
    minimizer of $E_\eps$ in $\mathcal A_\eps$.
  \end{enumerate}
  
\end{theorem}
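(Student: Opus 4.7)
My strategy is to identify the threshold $c = 8\pi$ as the exact leading-order crossover between two natural competitor classes. The first is the \emph{ball configuration} $\Omega = B_1(0)$ with the two charges placed antipodally on $\partial B_{1-\eps}(0)$, whose energy at $\gamma = c/\eps$ is
\begin{equation*}
E_\eps^{\mathrm{ball}} = 4\pi + \frac{c\eps^2}{2(1-\eps)} = 4\pi + \frac{c\eps^2}{2} + O(\eps^3).
\end{equation*}
The second is the \emph{one-charge-evaporated} generalized competitor of type $(N_1,N_2) = (1,1)$: a ball of radius $\eps$ containing only its own solvation sphere, together with a larger ball of volume $\frac{4\pi}{3}(1-\eps^3)$ containing the other charge. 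Strict concavity of $v \mapsto v^{2/3}$ identifies this asymmetric allocation as optimal among $(1,1)$ splittings, with total energy
\begin{equation*}
E_\eps^{\mathrm{split}} = 4\pi(1-\eps^3)^{2/3} + 4\pi\eps^2 = 4\pi + 4\pi\eps^2 + O(\eps^3).
\end{equation*}
These leading expressions coincide precisely at $c = 8\pi$. Other splitting structures (in particular $(2,0)$ with a charge-free ghost drop) only add perimeter to the $K=1$ infimum by subadditivity of $v \mapsto v^{2/3}$, and so are always dominated by $E_\eps^{\mathrm{split}}$.

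For part (i), when $c < 8\pi$ we have $E_\eps^{\mathrm{ball}} < E_\eps^{\mathrm{split}}$ for all sufficiently small $\eps$. The single-component infimum is bounded above by $E_\eps^{\mathrm{ball}}$, while every $K \geq 2$ generalized configuration has total energy at least $E_\eps^{\mathrm{split}}$, so by Theorem \ref{exgen} the generalized minimizer must have $K = 1$, producing a classical minimizer. For uniqueness up to rigid motions, I would first apply Schwarz (axial) symmetrization about the line through $x_1, x_2$ together with reflection across their perpendicular bisector to reduce to axially- and reflection-symmetric configurations (both operations preserve volume, $d$, and the containment $B_\eps(x_i) \subset \Omega$ without increasing perimeter, since the $B_\eps(x_i)$ are axisymmetric about the axis). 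The volume-constrained Euler-Lagrange equation then forces the free portion of $\partial\Omega$ to have constant mean curvature, which in the axisymmetric setting combined with connectedness (Theorem \ref{exgen}) identifies it as a piece of an unduloid. Admissible configurations therefore form a one-parameter family indexed by $d$, with energy $F_\eps(d) := P_\eps(d) + \gamma\eps^3/d$ expressible explicitly via Delaunay elliptic integrals (see Section \ref{s:two}); a direct analysis then shows $F_\eps$ has a unique minimum in the admissible range throughout the regime $\gamma < c/\eps$, $c < 8\pi$.

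For part (ii), suppose for contradiction that a classical minimizer $(\Omega, X) \in \mathcal A_\eps$ exists when $c > 8\pi$ and $\gamma = c/\eps$. Then $E_\eps(\Omega, X) = \inf_{\mathcal A_\eps} E_\eps \leq E_\eps^{\mathrm{split}}$, so in particular $P(\Omega) \leq 4\pi + O(\eps^2)$. The key step is to upgrade this to the diameter bound $\mathrm{diam}(\Omega) \leq 2 + O(\eps)$, implying $d \leq 2 + O(\eps)$ and therefore
\begin{equation*}
E_\eps(\Omega, X) \geq 4\pi + \frac{c\eps^2}{2} + O(\eps^3),
\end{equation*}
which strictly exceeds $E_\eps^{\mathrm{split}}$ once $c > 8\pi$ and $\eps$ is small enough---the desired contradiction. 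This diameter bound is the principal technical obstacle: I would obtain it by combining the quantitative isoperimetric inequality ($L^1$-closeness of $\Omega$ to a unit ball) with the $C^{1,1}$ regularity from Theorem \ref{exgen} and the axial symmetry from the Schwarz symmetrization of part (i), which together upgrade $L^1$-closeness to Hausdorff closeness; alternatively, one can work directly within the Delaunay parametrization, showing that any admissible axisymmetric configuration with $\mathrm{diam}(\Omega) > 2 + C\eps$ incurs perimeter excess bounded below by a universal constant times $C^2\eps^2$ for $C$ sufficiently large.
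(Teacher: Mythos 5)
Your overall strategy mirrors the paper's quite closely: both establish existence by comparing the classical ball competitor against the energy of the unique two-ball generalized minimizer (the paper's Lemma~\ref{generalized_en}), establish non-existence by combining a diameter bound with that same lower bound, and obtain uniqueness by reducing to axisymmetric unduloid configurations. However, there are two concrete soft spots worth naming.

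First, your diameter bound in part (ii) is stated as $\mathrm{diam}(\Omega) \leq 2 + O(\eps)$, but your first proposed route to it does not deliver that rate. Upgrading $L^1$-closeness to Hausdorff closeness via the density estimate of Lemma~\ref{density_est} yields $\delta^3 \lesssim |\Omega \triangle B_1| \lesssim \eps$, i.e., only $\delta \lesssim \eps^{1/3}$; $C^{1,1}$ regularity and axial symmetry do not by themselves improve this exponent. This is exactly the content of the paper's Lemma~\ref{N2bbd}, which gives $\Omega \subset B_{1 + C\eps^{1/3}}(x_0)$. Fortunately the weaker bound suffices: with $\gamma > c/\eps$ and $c > 8\pi$, one has $E_\eps \geq 4\pi + \frac{\gamma\eps^3}{2 + C\eps^{1/3}} > 4\pi(1 + \eps^2)$ for $\eps$ small, which is already a contradiction. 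So you should either prove the $\eps^{1/3}$ version (which is straightforward from the density estimate) or, if you insist on $O(\eps)$, commit to your second route through the Delaunay parametrization, since that is what actually yields the sharper constant.

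Second, the uniqueness argument compresses what is actually the bulk of the paper's technical work. You assert that after Schwarz symmetrization ``admissible configurations form a one-parameter family indexed by $d$,'' but this is not quite right: for a given tangency height the unduloid arc joining the two spheres may avoid its minimum (case 1), attain it once (case 2), or attain it twice (case 3), and one must additionally rule out nodary arcs, which is what the paper does in Lemma~\ref{unduloid_arc} and Lemma~\ref{One_ArcN2}. The paper then needs separate asymptotic expansions (Propositions~\ref{Theorem_case1}--\ref{Case3_non}) to eliminate cases 2 and 3 and to show strict convexity of the energy within case 1. A ``direct analysis of $F_\eps(d)$'' in the sense you describe would have to reproduce all of this; as written, the proposal skips the case discrimination that makes uniqueness nontrivial.
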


Note that the threshold for existence in the above theorem is
consistent with the one found in Theorems \ref{t:exist} and
\ref{t:non}, but without an a priori assumption on $\gamma$.  A
further quantitative characterization of these minimizers is presented
in Theorem \ref{Theorem_main2}, with all the necessary notations
defined in section \ref{s:two}.  The proof of the latter is rather
technical and involves a careful asymptotic analysis of the exact
global minimizers constructed in that case. Finally, we note that in
the case $N_\eps = 1$ the minimizers are trivially balls, so in the
following we can always assume $N_\eps \geq 2$ without loss of
generality.

  \subsection{Structure of the proof}

  Our starting point is establishing existence of generalized
  minimizers in Theorem \ref{exgen}, which is done with the help of
  the standard concentration compactness argument that exploits
  uniform regularity for volume-constrained minimizers of the
  perimeter in Lemma \ref{density}, a result which could be of
  independent interest in itself.

  Our next result is existence of classical minimizers of $E_\eps$
  when $\gamma$ is sufficiently large universal, $\eps$ is
  sufficiently small and the number of charges $N_\eps$ is not too
  large. Here, arguing by contradiction, we assume that no classical
  minimizer exists and first show that in the considered parameter
  regime all generalized minimizers consist of only one large
  component and one or more balls of radius $\eps$ each containing one
  charge, see Lemma \ref{Min_class}. Once this result is established,
  the proof of Theorem \ref{t:exist} follows from a sharp quantitative
  estimate of the Coulombic energy of a finite number of charges on a
  unit ball, together with an estimate of the energy gain resulting
  from suitably merging one small component of the generalized
  minimizer with the large component, which leads to a
  contradiction. Conversely, for the number of charges exceeding the
  existence threshold for $N_\eps$ in Theorem \ref{t:exist} we obtain
  a contradiction to the existence of a classical minimizer in Theorem
  \ref{t:non} by combining the characterization of minimizers in Lemma
  \ref{Outter_Ball} with a sharp lower bound on the Coulombic energy
  of finitely many charges confined to a ball. Furthermore, within the
  regime of validity of Theorem \ref{t:exist} we are able to pass to
  the limit $\eps \to 0$ and $N_\eps \to \infty$ with a suitable rate
  in Theorem \ref{t:ball} by combining the well-known
  $\Gamma$-convergence results for the Coulombic energy of many point
  charges with the asymptotic characterization of minimizers in Lemma
  \ref{Outter_Ball}.

  The key technical result used in the proofs is contained in Lemma
  \ref{Outter_Ball}, whose proof with some minor modifications is also
  used to obtain the diameter bounds for the components of generalized
  minimizers in Lemma \ref{Min_Component} and, as a consequence, a
  characterization of generalized minimizers in Lemma
  \ref{Min_class}. Lemma \ref{Outter_Ball} states that any classical
  minimizer of $E_\eps$ is close to a ball in a certain sense, at
  least for $N_\eps$ not too large. The proof utilizes an upper bound
  on the total mass of the minimizer contained outside a ball of
  radius slightly greater than 1 in Lemma \ref{Ball} with the uniform
  density estimate from Lemma \ref{density_est} and uses careful
  cutting arguments, together with a cutting estimate on the perimeter
  in Lemma \ref{Cutting_perimeter} and the connectivity of minimizers
  to show that every classical minimizer of $E_\eps$ in the considered
  range of $N_\eps$ is contained in some ball of radius arbitrarily
  close to 1. The rest of the lemmas of section
  \ref{sec:case-many-charges} establish the basic estimates on
  generalized minimizers that are used throughout the rest of the
  proofs.

  Finally, in section \ref{s:two} we prove the result in Theorem
  \ref{t:2} by first establishing existence of minimizers from the
  results of section \ref{sec:case-many-charges} and then showing that
  any minimizer of $E_\eps$ with $N = 2$ is an axisymmetric set, see
  Lemma \ref{rotational}, that for small $\eps$ is close in a certain
  sense to a unit ball, see Lemma \ref{N2bbd}. From that we conclude,
  in Lemma \ref{unduloid_arc}, that the free surface of the minimizer
  is a single section of an unduloid, which is quantified in several
  subsequent lemmas. The proof is concluded by enumerating the
  possibilities of joining the free surface with the obstacles and
  comparing their energies, based on careful expansions of the
  different contributions to the energy for $\eps$ small. The
  conclusion is then given in Theorem \ref{Theorem_main2} that
  provides a sharp asymptotic characterization of the minimizer in the
  considered regime.

\section{Existence of generalized minimizers}
\label{sec:gener-minim}

Following \cite{dephilippis23}, we first derive a uniform density
estimate for volume constrained minimizers of the perimeter in the
presence of spherical obstacles. The starting point of our analysis is
an Almgren type lemma that provides existence of a diffeomorphism
whose constants depend only on the volume and the perimeter of the
set, but not on the set itself, which is then used to compensate the
changes of volume under small perturbations of the set.

\begin{lemma}\label{field}
  For every $P>0$ there exists $\gamma>0$ such that if
  $\Om\subset \mathring{B}_R^c(0)$ for some $R > 0$ and
\begin{equation}\label{colcol}
|\Om|\ge 1,\qquad P(\Om)\le P,
\end{equation}
then there exists a vector field
$\eta\in C^1_c(\mathring{B}_R ^c (0))$ with
$\|\eta\|_{C^1(\mathring{B}_R ^c (0))}\le 1$ such that
\begin{equation}\label{eqeta}
  \int_\Om {\rm div} \, \eta\,dx \geq \gamma.
\end{equation}
\end{lemma}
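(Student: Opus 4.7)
\emph{Plan.} The proof naturally splits into a concentration step---using that a set with $|\Om|\ge 1$ and $P(\Om)\le P$ cannot be spread out at all scales---and a construction step in which $\eta$ is built as a truncated Newton potential of a balanced ``dipole'' density centered at the concentration point.

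\emph{Concentration.} I would cover $\R^3$ by unit balls $B_1(x_k)$ of bounded overlap $M$ and apply the relative isoperimetric inequality
\[
\min\bigl(|\Om\cap B_1(x_k)|,\,|B_1|-|\Om\cap B_1(x_k)|\bigr)^{2/3}\le C\,P(\Om,B_1(x_k))
\]
in each ball. Setting $a_k := |\Om\cap B_1(x_k)|$ and summing over $k$ gives $\sum_k a_k^{2/3}\le CMP$ while $\sum_k a_k\ge|\Om|\ge 1$, which is inconsistent with $\sup_k a_k$ being arbitrarily small. Hence there exist $c_0=c_0(P)>0$ and $x_0\in \R^3$ with $|\Om\cap B_1(x_0)|\ge c_0$; since the intersection is non-empty and $\Om\subset \mathring{B}_R^c(0)$, we also have $|x_0|\ge R-1$.

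\emph{Construction via Newton potential.} Next I would choose $y_0$ with $|y_0|\ge R+1$ so that $|\Om\cap B_1(y_0)|\le c_0/2$ (possible because $|\Om|\le C\,P^{3/2}$ by isoperimetry, so on any sphere $\partial B_T(0)$ of large radius most ambient balls miss $\Om$). Let $f=(\mathbf{1}_{B_1(x_0)}-\mathbf{1}_{B_1(y_0)})/|B_1|$, mollified to be $C_c^\infty$ with $\int f=0$, and set $\eta_0=\nabla u$ where $u$ is the Newton potential of $f$. Then $\mathrm{div}\,\eta_0=f$ and
\[
\int_\Om\mathrm{div}\,\eta_0=\int_\Om f\ge c_0/(2|B_1|).
\]
Because $\int f=0$, $\nabla u$ decays like $|x|^{-3}$ at infinity, so truncating by a smooth cutoff $\chi_M$ supported in $B_M(0)$ costs only $O(1/M)$ in the integral while keeping $\|\chi_M\eta_0\|_{C^1}$ bounded by a universal constant. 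When $|x_0|\ge R+1$, the support of $\chi_M\eta_0$ can be arranged to lie in $\mathring{B}_R^c(0)$ and a rescaling by the $C^1$-norm yields $\gamma=\gamma(P)>0$.

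\emph{Boundary case (the main obstacle).} The hard part will be $|x_0|\in[R-1,R+1)$, where $B_1(x_0)$ crosses $\partial B_R$. Here I would multiply $\eta$ by an extra radial cutoff $\beta(|x|^2-R^2)$ vanishing on $\partial B_R$ and equal to $1$ for $|x|\ge R+\delta$, and control the correction $\eta_0\cdot\nabla\beta$ supported in the collar $\{R\le|x|\le R+\delta\}$. The key point will be that a set $\Om$ with $|\Om|\ge 1$ and $P(\Om)\le P$ cannot concentrate too sharply against the spherical wall $\partial B_R$ without paying in perimeter: quantitatively, a pancake-like portion adjacent to $\partial B_R$ of volume $v$ and thickness $h$ satisfies $P \gtrsim v/h$, so a thickness $\delta=\delta(P)$ can be chosen uniformly in $\Om$ and $R$ so that $|\Om\cap \text{collar}|\le c_0/4$. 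Combined with the universal bound on $\|\nabla\beta\|_\infty\|\eta_0\|_\infty$, this keeps the collar correction negligible relative to $\int_\Om f$, and a final rescaling by the (universally bounded) $C^1$-norm of $\eta$ gives $\gamma=\gamma(P)>0$.
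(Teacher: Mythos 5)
Your plan diverges from the paper's proof, which is a compactness/contradiction argument: assuming a sequence $\Omega_k \subset \mathring{B}_{R_k}^c(0)$ with $\sup_\eta \int_{\Omega_k}\operatorname{div}\eta \to 0$, the paper translates to concentration points $x_k$, passes to $L^1_{loc}$ limits $F_k = \Omega_k - x_k \to F$ and $R_k \to R$, applies Almgren's lemma to $F$ (which has $|F\cap B_1(0)|\ge\bar\delta$ and lies in a limit exterior-of-ball region), and translates the resulting field back to reach a contradiction. Your approach is instead constructive, aiming at an explicit $\gamma(P)$. Your concentration step is fine — it is exactly \cite[Remark 29.11]{maggi}, which the paper cites — and your truncated Newton-potential construction works when $|x_0|\ge R+1$, so that $B_1(x_0)$ sits well inside $\{|x|\ge R\}$.

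The gap is in the boundary case $|x_0|\in[R-1,R+1)$, which you correctly identify as the main obstacle but then dispose of too quickly. The collar estimate you invoke is true and can be made rigorous (testing the Gauss--Green formula with the Lipschitz field $\psi(x)=\tfrac{x}{|x|}\min((|x|-R)^+,\delta)$ gives $|\Omega\cap\{R<|x|<R+\delta\}|\le \delta\,P(\Omega)$), but it is not strong enough for what you claim afterward. The correction term $\int_\Omega \chi_M\,\eta_0\cdot\nabla\beta$ is supported in the collar, $\|\nabla\beta\|_\infty\sim 1/\delta$, and $\|\eta_0\|_\infty\lesssim 1$, so the best pointwise bound is
\[
\bigl|\textstyle\int_\Omega \chi_M\,\eta_0\cdot\nabla\beta\bigr|\;\lesssim\;\tfrac{1}{\delta}\cdot|\Omega\cap\text{collar}_\delta|\;\lesssim\;\tfrac{1}{\delta}\cdot P\delta\;=\;P,
\]
which does \emph{not} go to zero as $\delta\to 0$ — the $1/\delta$ from $\nabla\beta$ exactly cancels the $\delta$ from the collar estimate. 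Meanwhile, the gain you are protecting is $\int_\Omega f \ge c_0/(2|B_1|)$ with $c_0 = c_0(P)\sim P^{-3}$ from the covering argument, so the correction, of order $P$, dwarfs it. Your assertion that $\|\nabla\beta\|_\infty\|\eta_0\|_\infty$ is ``universally bounded'' is the source of the error: $\|\nabla\beta\|_\infty$ depends on $\delta=\delta(P)$, and choosing $\delta$ small enough to control the collar volume sends $\|\nabla\beta\|_\infty$ to infinity. The same issue defeats the closing step, since $\|\eta\|_{C^1}\gtrsim 1/\delta$ is likewise not universal, and dividing by it wipes out what little gain remains. There is also no obvious repair: a thin spherical shell $\{R\le|x|\le R+h\}$ with $h\sim 1/P$ achieves $|\Omega|\ge 1$, $P(\Omega)\le P$, and all of its mass within distance $h$ of $\partial B_R$, so one cannot re-run the concentration step to find a ball at a uniformly positive distance from the wall. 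The paper's compactness argument avoids this entirely because Almgren's lemma is applied to a single fixed limit set, with no need for a uniform-in-$\Omega$, uniform-in-$R$ boundary cutoff.
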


\begin{proof}
  We reason as in \cite[Lemma 3.5]{dephilippis23} and assume by
  contradiction that there exist a sequence of radii $R_k > 0$ and a
  sequence of sets $\Om_k$ satisfying \eqref{colcol} such that
\begin{equation}\label{eqcontra}
  \lim_{k \to \infty} \sup_{\eta\in \mathcal A_k}\,\int_{\Om_k} {\rm
    div} \, \eta\,dx = 0, 
\end{equation}
where
$\mathcal A_k:=\{ \eta\in C^1_c(\mathring{B}_{R_k}^c (0))\text{ such
  that } \|\eta\|_{C^1(\mathring{B}_{R_k}^c (0))}\le 1\}$.  By
\cite[Remark 29.11]{maggi} for all $k \in \mathbb N$ there exists
$x_k\in \R^3$ such that
\begin{equation}\label{ddt}
|\Om_k\cap B_{1}(x_k)|\ge \bar \delta,
\end{equation}
with $\bar\delta = \bar\delta(P)>0$. Letting $F_k = \Om_k-x_k$, up to
a subsequence we have that $R_k\to R\in [0,+\infty]$,
$F_k\to F\subset \R^3$ in $L^1_{loc}(\R^3)$, with $P(F)\le P$. We only
deal with the case $R<+\infty$ and $x_k\to x \in \R^3$, since the other
cases can be treated analogously and are easier.

Passing to the limit in \eqref{ddt}, we get that
$|F\cap B_1(0)|\ge \bar \delta$.  In particular, by Almgren lemma
\cite[Lemma 3.4]{dephilippis23} (see also \cite{maggi,almgren76})
there exists $\eta_F\in C^1_c(\mathring{B}_R ^c (-x))$ with
$\|\eta_F\|_{C^1(\mathring{B}_R ^c (-x))}\le 1$ such that
\begin{align}
    \int_F {\rm div} \, \eta_F \,dx \geq \gamma_F,
\end{align}
for some $\gamma_F>0$. Letting now $\eta_k :=\eta_F(\cdot+x_k)$, which
belongs to $C^1_c(\mathring{B}_{R_k}^c (0))$ for $k$ large enough, we
have that
\begin{align}
  \lim_{k \to \infty} \int_{\Om_k} {\rm div} \, \eta_k\,dx \geq
  \gamma_F>0,
\end{align}
thus contradicting \eqref{eqcontra}.
\end{proof}

\begin{remark}
  It is not difficult to see that the conclusion of Lemma \ref{field}
  in fact holds for general sets $\Om \subset \R^n$ of finite
  perimeter and supported on a complement of a bounded open set
  $U \subset \R^n$, with the constant $\gamma$ depending only on the
  perimeter of $\Omega$ and $n$.
\end{remark}

From Lemma \ref{field}, we derive the following uniform density
estimate.

\begin{lemma}\label{density}
For every $\eta,\delta > 0 $ there exist a $c_0>0$ depending only on $\eta$ such that if $\Om\subset \R^3$ is
  a minimizer of
\begin{align}
  \min\big\{P(E,B_R ^c (0))\,:\, E\cap B_R(0)=\Om\cap
  B_R(0),\,|E|=|\Om| \big\},
\end{align}
with $P(\Omega \cap B_R^c(0)) < \eta  |\Omega \cap B_R^c(0)|^{2/3} $ and $|\Omega \cap B_R^c(0)| > \delta$,
then
\begin{align}
|\Om\cap B_r(x)|\ge c r^3,
\end{align}
for all $x\in \overline\Om\setminus B_R(0)$ and
$r\in (0,c_0 \delta^\frac13)$ such that $B_r(x)\subset B_R ^c (0)$,
where the constant $c>0$ is universal and $\overline \Omega$ is
understood in the measure theoretic sense.
\end{lemma}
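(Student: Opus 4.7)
The plan is a De Giorgi–type density estimate obtained from a differential inequality for $V(r):=|\Om\cap B_r(x)|$ coupling the relative isoperimetric inequality in $B_r(x)$ with the minimality of $\Om$ against a volume-compensated competitor. Because admissible competitors must coincide with $\Om$ inside $B_R(0)$, the mass deleted near $x$ has to be reabsorbed by a deformation supported in $B_R^c(0)$; that is exactly what Lemma \ref{field} provides.

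First I would set $m:=|\Om\cap B_R^c(0)|\ge\delta$ and rescale $\tilde\Om:=m^{-1/3}(\Om\cap B_R^c(0))$, so that $|\tilde\Om|=1$ and $P(\tilde\Om)=m^{-2/3}P(\Om\cap B_R^c(0))<\eta$. Applying Lemma \ref{field} with perimeter bound $\eta$ yields $\tilde\eta\in C^1_c$ with $\|\tilde\eta\|_{C^1}\le 1$ and $\int_{\tilde\Om}\mathrm{div}\,\tilde\eta\ge\gamma(\eta)$. Pulling back by $y=x/m^{1/3}$ and renormalizing so that $\|\cdot\|_{C^1}\le 1$ produces a field $\bar\eta\in C^1_c(B_R^c(0))$ with $\int_{\Om}\mathrm{div}\,\bar\eta\ge\gamma_0$, where $\gamma_0\asymp\gamma(\eta)\min(m,m^{2/3})\ge\gamma(\eta)\min(\delta,1)$. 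Tracking this scaling together with $P(\Om,B_R^c(0))\le\eta m^{2/3}$, the critical ratio $\eta m^{2/3}/\gamma_0$ is controlled uniformly in $m$ by $C\eta/(\delta^{1/3}\gamma(\eta))$; this is the key bookkeeping point.

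For $r>0$ small enough that $B_r(x)\subset B_R^c(0)$, let $\Phi_t$ be the flow of $\bar\eta$ and set $E_t:=\Phi_t(\Om\setminus B_r(x))$. Since $\Phi_t=\mathrm{id}$ on $B_R(0)$, one has $E_t\cap B_R(0)=\Om\cap B_R(0)$, and choosing $t=t(r)$ so that $|E_t|=|\Om|$ forces $t\asymp V(r)/\gamma_0$ (the contribution of $B_r(x)$ to $\int\mathrm{div}\,\bar\eta$ is $O(V(r))$ and hence absorbed as soon as $V(r)\ll\gamma_0$). Writing $\alpha(r):=P(\Om,B_r(x))$ and $h(r):=\mathcal H^2(\partial B_r(x)\cap \Om^{(1)})=V'(r)$ a.e.\ by coarea, the minimality of $\Om$ together with the standard flow perimeter estimate $P(\Phi_t(E),B_R^c(0))\le(1+Ct)P(E,B_R^c(0))$ gives
\begin{equation*}
  \alpha(r)\;\le\;h(r)+C\,t\,P(\Om,B_R^c(0))\;\le\;h(r)+C_*V(r),\qquad C_*\le\tfrac{C\eta}{\delta^{1/3}\gamma(\eta)}.
\end{equation*}
Combined with the relative isoperimetric inequality $V(r)^{2/3}\le C_{\mathrm{iso}}\alpha(r)$ (valid while $V(r)\le\tfrac12|B_r|$), this becomes, for $u:=V^{1/3}$, the a.e.\ differential inequality $1\le 3C_{\mathrm{iso}}u'+C_{\mathrm{iso}}C_*u$.

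Because trivially $u(r)\le(4\pi/3)^{1/3}r$, the absorption term $C_{\mathrm{iso}}C_*u$ stays below $1/2$ whenever $r<c_0\delta^{1/3}$ with $c_0:=\gamma(\eta)/(C''\eta)$ depending only on $\eta$; on that range $u'\ge 1/(6C_{\mathrm{iso}})$, and integrating from zero yields $u(r)\ge r/(6C_{\mathrm{iso}})$, whence $V(r)\ge cr^3$ with $c$ universal. The main obstacle is precisely the scaling bookkeeping in the previous paragraph: Lemma \ref{field} must be applied in a way that tracks every factor of $m$, so that $C_*$ depends on $\delta$ only through $\delta^{-1/3}$ and carries no residual $m$-dependence; this is what makes the threshold $c_0\delta^{1/3}$ a function of $\eta$ alone while preserving the universality of the density constant $c$.
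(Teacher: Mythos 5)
Your proof follows the same route as the paper's: apply Lemma \ref{field} (rescaled to unit volume) to produce a compactly supported field in $B_R^c(0)$ whose flow restores volume, deduce the quasi-minimality estimate $\alpha(r)\le h(r)+C_*V(r)$, and close via the relative isoperimetric inequality and a differential inequality for $V^{1/3}$. The paper compresses this by invoking $(\Lambda,r_0)$-minimality and the standard density estimate for such minimizers; you unroll those two black boxes and, usefully, make explicit the scaling bookkeeping that shows $C_*\lesssim \eta/\bigl(\delta^{1/3}\gamma(\eta)\bigr)$ and hence $c_0=c_0(\eta)$, which the paper's ``up to a rescaling, assume $\delta=1$'' leaves implicit.
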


\begin{proof}
  Up to a rescaling, we can assume that $\delta = 1$. Notice that by a
  projection argument we have
  \begin{align}
    P(\Omega, B_R^c(0)) +  \mathcal H^2(\Omega \cap \partial
    B_R(0)) = P(\Omega \cap B_R^c(0)) \geq 2 \mathcal H^2(\Omega \cap
    \partial B_R(0)).
  \end{align}
  Then reasoning as in \cite[Proposition 4.4]{dephilippis23} and
  applying Lemma \ref{field} to the set $\Omega \cap B_R^c(0)$, we get
  that $\Om$ is a $(\Lambda,r_0)$-minimizer of the perimeter in
  $B_R ^c (0)$, where $\Lambda, r_0$ are positive constants depending
  only on $P$.  The result then follows by \cite[Theorem
  21.11]{maggi}.
\end{proof}


\begin{proof}[Proof of Theorem \ref{exgen}]
Let $(\Omega_n, X_n)$ be a minimizing sequence and let
$X_n = \cup_{i=1}^N \{x_{i,n}\}$. As the total number of charges is
fixed, up to extraction of a subsequence (not relabeled) the charges
segregate into $1 \leq K \leq N$ clusters moving apart as
$n \to \infty$. More precisely, for each $k \in \{1, 2, \ldots, K\}$
there exist $N_k \in \mathbb N$ and an index set
$I_k = \{i^k_1, i^k_2, \ldots, i^k_{N_k}\}$ such that
$\cup_{k=1}^K I_k$ forms a disjoint partition of $\{1, \ldots, N\}$
for each $n \in \mathbb N$ and
\begin{align}
  \label{limsupX}
  & \limsup_{n \to \infty} \left| x_{i,n} - x_{j,n} \right|
    < \infty \qquad \forall i \in I_k \ \text{and} \ \forall j \in 
    I_k, \\
  \label{liminfX}
  & 
    \liminf_{n \to \infty} \left| x_{i,n} - x_{j,n} \right|
    = \infty \qquad \forall i \in I_k \ \text{and} \ \forall j
    \not\in I_k. 
\end{align}

Consider now $\Omega_n^k := \Omega_n - x_{i^k_1,n}$ and
$X_n^k := \cup_{i \in I_k} \{ x_{i,n} - x_{i^k_1,n} \}$. By
\eqref{limsupX} and \eqref{liminfX}, there exists $R_0 \ge 1$ such
that $B_\rho(x_{i,n}) \subset B_{R_0}(x_{i_1^k,n})$ for all
$i \in I_k$ and all $1 \leq k \leq K$, and for every
$\widetilde R > 0$ we have
$B_\rho(x_{i,n}) \subset B_{\widetilde R}^c(x_{i_1^k,n})$ for all
$i \not\in I^k_n$ and all $n$ large
enough. 
Then, for $R_0 < R < \widetilde R$ and $L > 0$ we define a competitor set
\begin{align}
  \label{OmRL}
  \widetilde \Omega_n^{R,L} := \left( \bigcup_{k=1}^K (\Omega_n^k \cap
  B_R(0)) + e_1 k L) \right) \bigcup \Omega_n^0,
\end{align}
where for
$r := \left( {3 \over 4 \pi} |\Omega_n \setminus (\cup_{k=1}^K
  B_R(x_{i_1^k,n}))| \right)^{1/3}$ we have
$\Omega_n^0 := \varnothing$ if $r = 0$ or $\Omega_n^0 := B_r(0)$ if
$r > 0$, together with
\begin{align}
  \label{XRL}
  \widetilde X_n^{R,L} = \bigcup_{k=1}^K (X_n^k + e_1 k L). 
\end{align}

By construction,
$(\widetilde \Omega_n^{R,L}, \widetilde X_n^{R,L}) \in \mathcal
A_{m,N,\rho}$ for all $n$ and $L$ large enough independent of
$R$. Notice that
\begin{align}
  P(\widetilde \Omega_n^{R,L}) = \sum_{k=1}^K P(\Omega_n^k, B_R(0)) +
  \sum_{k=1}^K \mathcal H^2(\Omega_n^k \cap \partial B_R(0))
  + 4 \pi r^2, 
\end{align}
for almost all $R_0 < R < \widetilde R$, and
\begin{align}
  \sum_{i \not= j} {1 \over |x_{i,n} - x_{j,n}|} \geq \sum_{k=1}^K
  \sum_{\stackrel{i,j \in I_k}{i \not= j}}  {1 \over |\tilde x_{i,n} -
  \tilde x_{j,n}|},
\end{align}
where
$\widetilde X_n^{R,L} = \cup_{i = 1}^N \{ \tilde x_{i,n}^{R,L}
\}$. Thus, by the isoperimetric inequality we have
\begin{align}
  \label{Erow}
  E_{\rho, \lambda,
  N}(\widetilde \Omega_n^{R,L}, \widetilde X_n^{R,L})  \leq E_{\rho,
  \lambda, 
  N}(\Omega_n, X_n) + 2  \sum_{k=1}^K \mathcal 
  H^2(\Omega_n^k \cap \partial B_R(0))+ {C 
  \over L},
\end{align}
for some $C > 0$ independent of $n$, $L$ and $R$. Furthermore, since
\begin{align}
  \sum_{k=1}^K |\Omega_n^k \cap (B_{\widetilde R}(0) \backslash
  B_{R_0}(0))| = 
  \sum_{k=1}^K \int_{R_0}^{\widetilde R} \mathcal 
  H^2(\Omega_n^k \cap \partial B_R(0)) \, dR \leq m,
\end{align}
for every $\widetilde R \geq 2 R_0$ it is possible to choose
$R \in (\widetilde R/2, \widetilde R) \subset (R_0, \widetilde R)$
such that
\begin{align}
  \label{Rn}
  \sum_{k=1}^K \mathcal H^2(\Omega_n^k \cap \partial B_R(0)) \leq {2 m
  \over \widetilde R}.  
\end{align}
Therefore, up to a subsequence (again, not relabeled) we can choose
$\widetilde R = \widetilde R_n \to \infty$ and $R = R_n \to \infty$
such that \eqref{Rn} holds, as well as $L = L_n \to \infty$
sufficiently fast, so that by \eqref{Erow} we have that
$(\widetilde \Omega_n, \widetilde X_n) := (\widetilde\Omega_n^{R_n,
  L_n}, \widetilde X_n^{R_n, L_n})$ is also a minimizing sequence.

We now modify the sets $\widetilde \Omega_n$ as follows to further
reduce the energy: For each $1 \leq k \leq K$, we replace the set
$(\widetilde \Omega_n - \tilde x_{i_1^k,n}) \cap B_{R_n}(0)$, where
$\widetilde X_n = \cup_{i=1}^N \{ \widetilde x_{i,n} \}$, with the
minimizer $\widetilde \Omega_n^k$ of the perimeter among all sets
supported in $B_{R_n}(0)$, 
containing $\cup_{\tilde x\in \widetilde X^k_n}B_\rho(\tilde x)$, and
satisfying $|\widetilde \Omega_n^k| = |\Omega_n^k \cap
B_{R_n}(0)|$. Existence of such a minimizer follows from the direct
method of calculus of variations (see, e.g., \cite[Section
12.5]{maggi}). We may also assume that each set
$\widetilde \Omega_n^k \cup B_{R_0}(0)$ is connected, since otherwise
all the mass of the disconnected pieces of
$\widetilde \Omega_n^k \backslash B_{R_0}(0)$ may be absorbed into the
ball $\Omega_n^0$ at the origin, producing a new set
$\widetilde \Omega_n^0$ without increasing the perimeter while
conserving the total mass. We denote by $\overline \Omega_n$ the set
obtained by replacing $\Omega_n^k$ with $\widetilde \Omega_n^k$ in the
definition of $\widetilde \Omega_n$. By construction we have
$(\overline \Omega_n, \widetilde X_n) \in \mathcal A_{m,N,\rho}$ and
$E_{\rho,\lambda,N}(\overline \Omega_n, \widetilde X_n) \leq
E_{\rho,\lambda,N}(\widetilde \Omega_n, \widetilde X_n)$, so
$(\overline \Omega_n, \widetilde X_n)$ is again a minimizing sequence.

Notice that if we compare the perimeter of $\widetilde \Omega_n^k$
with the one of $(\widetilde \Omega_n^k\cap B_r(0))\cup B$, where
$B \subset B_{R_n}^c(0)$ is a ball of volume
$v(r):=|\widetilde \Omega_n^k\setminus B_r(0)|$, after some simple
calculations we get that
\begin{align}
  P(\widetilde \Omega_n^k\setminus B_r(0))\le \bar c v ^\frac 23 (r)
  - 2{dv(r) \over dr} 
\end{align}
for a.e. $r\in (R_0,R_n)$, where $\bar c=(36\pi)^\frac 13$. It follows
that if $v(R_0 + 1) > 0$, then for all large enough $n$ we have
\begin{align}
  \int_{R_0}^{R_0+1}\frac{P(\widetilde \Omega_n^k\setminus
  B_r(0))}{v ^\frac 23 (r)}\,dr\le \bar c + 6 v^{\frac13}(R_0)\le \bar
  c + 6 m^{\frac13}. 
\end{align}
In particular, there exists $R_0'\in (R_0,R_0+1)$, depending on $n$
and $k$, such that
\begin{align}
  \frac{P(\widetilde \Omega_n^k\setminus B_{R_0'}(0))}{|\widetilde
  \Omega_n^k\setminus B_{R_0'}(0)|^\frac 23}\le \bar c + 6 m^{\frac13}. 
\end{align}
Then, by Lemma \ref{density}
applied with  $\Omega=\widetilde \Omega_n^k$ and $R=R_0'$,
the minimizer $\widetilde \Omega_n^k$
satisfies a uniform density estimate of the form
\begin{align}
  \label{dens}
  |\widetilde \Omega_n^k \cap B_r(x)| \geq c r^3,
\end{align}  
for some universal $c > 0$ and for all
$x \in \widetilde \Omega_n^k \backslash \overline B_{R_0'}(0)$ and
$0 < r \leq r_0:= c(m) |\widetilde \Omega_n^k \backslash
B_{R_0'}(0)|^{1/3}$.  Moreover, we claim that
$\widetilde \Omega_n^k \subset B_{R_\infty}(0)$ for some
$R_\infty > 0$ independent of $n$. Indeed, if
$|\widetilde \Omega_n^k \backslash B_{R_0 + 1}(0)| = 0$, there is
nothing to prove. At the same time, in view of the connectedness of
$\widetilde \Omega_n^k \cup B_{R_0}(0)$, the claim follows easily
by applying the density estimate in \eqref{dens} with $r = r_0$ to a
sequence of
$x = x_l \in \widetilde \Omega_n^k \cap \left( \partial B_{R_{0}+1 +
    (3 l - 1) r_0}(0) \backslash B_{R_{0}+1 + (3 l - 2) r_0}(0)
\right)$, for $l \in \mathbb N$, and the fact that
$|\widetilde \Omega_n^k \backslash B_{R_0}(0)|$ is bounded by $m$.

We now send $n \to \infty$. By compactness in $BV(B_{R_\infty}(0))$,
upon extraction of a subsequence we have
$\widetilde \Omega_n^k \to \Omega_\infty^k$ in the $L^1$-topology for
all $1 \leq k \leq K$. Also, since by construction
$\widetilde \Omega_n^0$ are balls containing the excess mass or are
empty, we likewise have $\widetilde \Omega_n^0 \to \Omega_\infty^0$ in
$L^1(\R^3)$ and $P(\widetilde \Omega_n^0) \to
P(\Omega_\infty^0)$. Then, by the lower-semicontinuity of the
perimeter we have
$\liminf_{n \to \infty} P(\widetilde \Omega_n^k) \geq
P(\Omega_\infty^k)$ for all $1 \leq k \leq K$. Upon a further
extraction of a subsequence we may also assume that
$x_{i,n} - x_{i_1^k,n} \to x_{i,\infty}^k$ for all $i \in I_k$, and by
continuity of the Coulombic energy we have
\begin{align}
  \lim_{n \to \infty} \sum_{\stackrel{i,j \in I_k}{i \not= j}} {1
  \over |x_{i,n} - x_{j,n}|} = 
  \sum_{\stackrel{i,j \in I_k}{i \not= j}}  {1 \over |\tilde x_{i,\infty}^k -
  \tilde x_{j,\infty}^k|}.
\end{align}
Thus, letting $X_\infty^k := \cup_{i \in I_k} \{ x_i^k \}$ we have
\begin{align}
  \label{infEK}
  \inf_{(\Omega, X) \in \mathcal A_{m,N,\rho}} E_{\rho,\lambda,N} =
  \liminf_{n \to \infty} E_{\rho,\lambda,N} (\Omega_n, X_n) \geq
  \liminf_{n \to \infty} E_{\rho,\lambda,N} (\overline \Omega_n,
  \widetilde X_n)  \notag \\
  \geq P(\Omega_\infty^0) + \sum_{k=1}^K \left( P(\Omega_\infty^k) +
  {\lambda \over 2}
  \sum_{\stackrel{i,j \in I_k}{i \not= j}}  {1 \over |\tilde
  x_{i,\infty}^k - \tilde x_{j,\infty}^k|} \right) \\
  = P(\Omega_\infty^0) + \sum_{k=1}^K E_{\rho,\lambda,N_k}
  (\Omega_\infty^k, X_\infty^k). \notag
\end{align}
Moreover, $ (\Omega_\infty^k, X_\infty^k)$ minimize
$E_{\rho,\lambda,N_k}$ over $\mathcal A_{m_k, N_k, \rho}$, where
$m_k := |\Omega_\infty^k|$, and by construction $\Omega_\infty^0$
minimizes the perimeter among all sets with mass
$m_0 = m - \sum_{k=1}^K m_k$. Indeed, otherwise it would be possible
to construct a test configuration of the form of \eqref{OmRL} from
those in $\mathcal A_{m_k, N_k, \rho}$ such that \eqref{infEK} is
violated. Finally, using $(\Omega_\infty^k, X_\infty^k)$ to form a
test function of the form of \eqref{OmRL} and sending $L \to \infty$
yields equality in \eqref{infEK}.

Finally, the regularity of $\partial\Omega_j$ follows by standard
regularity theory for minimal surfaces with smooth obstacles (see for
instance \cite[Theorem 21.8]{maggi}). Regularity, in turn, implies
connectedness, as otherwise the energy of two pieces that both contain
charges can be decreased by moving them far apart, while any two
pieces such that at least one piece does not contain any charges (and
hence is a ball) can be made to touch without changing energy,
contradicting the regularity of minimizers. Finally, the same constant
mean curvature for all the components away from the set of charges
follows from the arguments of \cite[Section 17.3]{maggi}.
\end{proof}

\section{Case of many charges}
\label{sec:case-many-charges}

\subsection{Preliminaries}

From here on we are concerned with minimizing the energy given in
\eqref{eq:Eeps} among $(\Omega, X) \in \mathcal A_\eps$. Note that
since by Theorem \ref{exgen} generalized minimizers always exist
whenever $\mathcal A_\eps$ is non-empty, it is convenient to formulate
our energy estimates in terms of the energy of such
minimizers. Furthermore, as competitors we may consider finite
collections of pairs $(\Omega_i, X_i)$, where $\Omega_i \subset \R^3$
are open sets with sufficiently smooth boundaries and
$X_i \subset \R^3$ are finite discrete sets satisfying
\begin{align}
  \sum_i |\Omega_i| = \frac{4\pi}{3}, \qquad \sum_i |X_i| = N_\eps.
\end{align}
With some abuse of notation, we will denote $k$ copies of the
component $(\Omega_i, X_i)$ of a competitor as $(\Omega_i, X_i)^k$,
with the obvious convention that
$(\Omega_i, X_i)^0 = (\varnothing, \varnothing)$.  We also define the
Coulombic interaction energy $V_\varepsilon(X)$ as
\begin{equation}
V_\varepsilon(X) :=\gamma \eps^3 \sum_{i
  = 1}^{N_\eps-1} \sum_{j=i+1}^{N_\eps} {1 
  \over |x_i - x_j|}. 
\end{equation}
Lastly, we note that in the statements and proofs that follow we
sometimes utilize explicit constants in the estimates, which, however,
are not intended to be optimal.

As a starting point, we have the following basic upper bound on the
minimal energy, which is obtained by considering a non-interacting
configuration of one large ball and $N_\eps-1$ individual discrete
charges. In particular, it gives a universal upper bound on the
minimal energy for $N_\eps \leq {1 \over \eps^2}$. 

\begin{lemma}
\label{lem1}
If $\{ (\Omega_1,X_1), \ldots, (\Omega_k,X_k)\}$ is a generalized
minimizer then
\begin{equation}
    \sum_{i=1}^{k} E_{\varepsilon}(\Omega_i,X_i)< 4 \pi (1 + 
    \varepsilon^2 N_\eps )   . 
  \end{equation}
\end{lemma}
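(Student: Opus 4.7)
The plan is to exhibit an admissible competitor whose energy is strictly below $4\pi(1 + \eps^2 N_\eps)$, from which the claim follows because the sum $\sum_{i=1}^k E_\eps(\Omega_i, X_i)$ over a generalized minimizer equals $\inf_{\mathcal A_\eps} E_\eps$ by the definition of a generalized minimizer. Following the informal hint in the statement, my competitor is a disjoint union of one large ``host'' ball carrying a single charge and $N_\eps - 1$ tiny ``satellite'' balls of radius $\eps$, each containing a single charge at its center, with the host and all satellites pushed far apart so that the Coulombic interaction is negligible.

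Concretely, I would set
\[
\Omega^\ast := B_r(0) \cup \bigcup_{i=1}^{N_\eps - 1} B_\eps(y_i), \qquad X^\ast := \{0, y_1, \dots, y_{N_\eps - 1}\},
\]
with $r := (1 - (N_\eps - 1)\eps^3)^{1/3}$, so that $|\Omega^\ast| = \tfrac{4\pi}{3}$, and with the satellite centers $y_i$ chosen so that all $N_\eps$ solvation balls are mutually disjoint and pairwise far apart. The radius $r$ is well-defined because the disjointness of the $N_\eps$ solvation balls built into $\mathcal A_\eps$ already forces the packing bound $N_\eps \eps^3 \leq 1$, so non-emptiness of $\mathcal A_\eps$ is not an extra assumption; thus $(\Omega^\ast, X^\ast) \in \mathcal A_\eps$.

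A direct computation gives $P(\Omega^\ast) = 4\pi r^2 + 4\pi(N_\eps - 1)\eps^2$, while the Coulombic interaction $V_\eps(X^\ast)$ can be made arbitrarily small by sending $\min_{i \neq j}|y_i - y_j| \to \infty$ and $\min_i |y_i| \to \infty$. Passing to the infimum over such placements yields $\inf_{\mathcal A_\eps} E_\eps \leq 4\pi r^2 + 4\pi(N_\eps - 1)\eps^2$. Since $r^3 \leq 1$ forces $r \leq 1$ and hence $r^2 \leq 1$, I conclude
\[
\inf_{\mathcal A_\eps} E_\eps \;\leq\; 4\pi + 4\pi(N_\eps - 1)\eps^2 \;<\; 4\pi(1 + \eps^2 N_\eps),
\]
the final strict inequality coming from $N_\eps - 1 < N_\eps$. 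I don't foresee any real obstacle: the entire content is the choice of competitor, and the one point requiring a moment's care — that the $y_i$ can be placed far apart while everything remains admissible — is immediate by translating them out along independent coordinate rays.
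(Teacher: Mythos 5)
Your proof is correct and takes essentially the same approach as the paper: both test against the configuration of one ball of radius $r = (1-(N_\eps-1)\eps^3)^{1/3}$ carrying a single charge plus $N_\eps - 1$ isolated balls of radius $\eps$, and conclude from $r \leq 1$. The only cosmetic difference is that you phrase the competitor as a classical one with the satellites sent off to infinity, whereas the paper writes it directly as a generalized competitor $\{(B_{r_1}(0),\{0\}),(B_\eps(0),\{0\})^{N_\eps-1}\}$; these are equivalent by the very definition of generalized minimizer.
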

\begin{proof}
  Testing the energy with the configuration of one charge in the
  center of a large ball and $N_\eps-1$ single charges in balls of
  radius $\eps$,
  namely, taking as a candidate \\
  $\{ (B_{r_1}(0), \{0\}), (B_\eps(0), \{0\})^{N_\eps - 1} \}$, where
  $r_1= \sqrt[3]{1-(N_\eps-1) \varepsilon^3} \leq 1$, we have
\begin{equation}
  \sum_{i=1}^k E_\eps(\Omega_i, X_i)  \leq 4 \pi r_1^2 + 4 \pi
  \varepsilon^2 (N_\eps-1),
  \end{equation}
  which yields the desired inequality.
\end{proof}

Note that as a convention from here on we order the elements of a
generalized minimizer
$\{ (\Omega_1,X_1),(\Omega_2,X_2), \ldots, (\Omega_k,X_k) \}$ in terms
of the decreasing magnitude of $|\Omega_i|$.
\begin{lemma}
\label{Lemma_2}
There exists a universal constant $C>0$ such that if
$\{ (\Omega_1,X_1), \ldots, (\Omega_k,X_k)\}$ is a generalized
minimizer then
$|\Omega_1| \geq \frac{4 \pi}{3} - C \varepsilon^3
N_\eps^{\frac{3}{2}} $.
\end{lemma}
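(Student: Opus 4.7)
The plan is to combine the isoperimetric inequality, applied component by component, with the a priori upper bound of Lemma \ref{lem1}. Write $M := \tfrac{4\pi}{3}$ and $m_i := |\Omega_i|$, so that $\sum_i m_i = M$ and (by our ordering) $m_1 \geq m_i$ for all $i$. Introduce $v := M - m_1 = \sum_{i\geq 2} m_i$. Since the Coulombic term is nonnegative and $4\pi = (36\pi)^{1/3} M^{2/3}$, Lemma \ref{lem1} gives
\begin{align*}
(36\pi)^{1/3} \sum_i m_i^{2/3} \leq \sum_i P(\Omega_i) \leq \sum_i E_\varepsilon(\Omega_i, X_i) < (36\pi)^{1/3} M^{2/3}\,(1 + \varepsilon^2 N_\varepsilon).
\end{align*}
Subadditivity of $t \mapsto t^{2/3}$ (which is concave and vanishes at $0$) yields $\sum_{i \geq 2} m_i^{2/3} \geq v^{2/3}$, hence $(M-v)^{2/3} + v^{2/3} \leq M^{2/3}(1 + \varepsilon^2 N_\varepsilon)$.

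Setting $t := v/M \in [0,1]$ and applying the elementary inequality $(1-t)^{2/3} \geq 1 - t$ (which follows from $s^{2/3} \geq s$ on $[0,1]$), I extract the scalar bound
\begin{align*}
t^{2/3} - t \leq \varepsilon^2 N_\varepsilon.
\end{align*}
The function $f(t) = t^{2/3} - t$ vanishes at the endpoints of $[0,1]$ and attains its maximum $4/27$ at $t = 8/27$, so for $\varepsilon^2 N_\varepsilon$ sufficiently small this inequality only permits $t$ near $0$ or near $1$.

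The crucial step will be ruling out the second alternative, in which $m_1$ (and hence every $m_i$) is small. Here I will argue that if $m_1 \leq \eta$ with $\eta \leq 1$, then $m_i^{2/3} \geq \eta^{-1/3} m_i$ for every $i$, so $\sum_i m_i^{2/3} \geq M \eta^{-1/3}$. Inserting this into the perimeter bound forces $\eta \geq M (1 + \varepsilon^2 N_\varepsilon)^{-3}$, which for $\varepsilon^2 N_\varepsilon$ below a universal constant yields $m_1 \geq M/8$, i.e.\ $t \leq 7/8$. Combined with the analysis of $f$, this forces $t \in [0, 8/27]$.

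To conclude, on the interval $t \in [0, 8/27]$ we have $1 - t^{1/3} \geq 1/3$, so $f(t) \geq t^{2/3}/3$, giving $t^{2/3} \leq 3 \varepsilon^2 N_\varepsilon$ and therefore
\begin{align*}
v = tM \leq 3\sqrt{3}\, M\, (\varepsilon^2 N_\varepsilon)^{3/2} = 3\sqrt{3}\, M\, \varepsilon^3 N_\varepsilon^{3/2},
\end{align*}
using the identity $(\varepsilon^2 N_\varepsilon)^{3/2} = \varepsilon^3 N_\varepsilon^{3/2}$. In the complementary regime where $\varepsilon^2 N_\varepsilon$ exceeds the universal threshold above which this argument breaks, the same identity makes $\varepsilon^3 N_\varepsilon^{3/2}$ bounded below by a positive constant, so the inequality $|\Omega_1| \geq M - C\varepsilon^3 N_\varepsilon^{3/2}$ is made trivial by enlarging $C$. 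The main obstacle will be the auxiliary exclusion step: without it, the scalar inequality $f(t) \leq \varepsilon^2 N_\varepsilon$ is genuinely two-sided and admits spurious near-degenerate solutions in which $\Omega_1$ carries vanishing mass.
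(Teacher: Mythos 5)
Your proof is correct, but it takes a different route from the paper's, and the difference is instructive. The paper works directly with the radii $r_i = \left(\tfrac{3}{4\pi}|\Omega_i|\right)^{1/3}$: after eliminating $r_1$ and using $t^{2/3}>t$, it obtains $\sum_{i\geq 2}4\pi r_i^2(1-r_i)\lesssim \eps^2 N_\eps$, and then observes that the ordering $|\Omega_1|\geq|\Omega_2|\geq\cdots$ forces $r_2\leq 2^{-1/3}$, so that $1-r_i\geq 1-2^{-1/3}$ is a \emph{uniform} lower bound for all $i\geq 2$. This kills off the degenerate branch immediately, and the conclusion follows from $\ell^p$-monotonicity $\left(\sum r_i^3\right)^{1/3}\leq\left(\sum r_i^2\right)^{1/2}$. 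In contrast, you first collapse the vector of masses $(m_i)_{i\geq 2}$ to the single scalar $v=\sum_{i\geq 2}m_i$ via subadditivity of $t\mapsto t^{2/3}$, arriving at the scalar inequality $f(t)=t^{2/3}-t\leq\eps^2 N_\eps$. Since $f$ vanishes at \emph{both} endpoints of $[0,1]$, this alone is insufficient, and you are forced to add the auxiliary exclusion step (which again uses the ordering, now in the form $m_i\leq m_1\leq\eta$) to pin $t$ to the branch near $0$. So both proofs use the ordering in an essential way, but the paper uses it term by term before summing, which pre-empts the degenerate alternative, whereas your scalar reduction loses that structure and must recover it afterwards. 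Your version also correctly handles the complementary regime $\eps^2 N_\eps\gtrsim 1$ by enlarging $C$ so the conclusion becomes vacuous; this is also implicitly present in the paper's statement but is worth making explicit, as you did. The net effect: same ingredients (isoperimetric inequality, Lemma~\ref{lem1}, the ordering, power-mean/H\"older-type estimates), but your approach buys a cleaner scalar picture at the cost of an extra case analysis, while the paper's buys a more direct estimate at the cost of carrying around the full vector of radii.
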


\begin{proof}
  Without loss of generality let $k>1$. Let
  $r_i := \left (\frac{3}{4 \pi} |\Omega_i| \right)^{\frac{1}{3}} $,
  then by the isoperimetric inequality and positivity of $V_\eps$ we
  have
 \begin{equation}
    \sum_{i=1}^{k} E_{\varepsilon}(\Omega_i) \geq \sum_{i=1}^{k} 4 \pi
    r_i^2. 
 \end{equation}
 Eliminating $r_1$ via the volume constraint $\sum_{i=1}^k r_i^3=1$
 and using the fact that $t^\frac23 > t$ for $t \in (0,1)$, we obtain
 \begin{equation}
   \sum_{i=1}^{k} E_{\varepsilon}(\Omega_i) \geq \sum_{i=2}^{k} 4 \pi
   r_i^2 + 4 \pi \left (1-\sum_{i=2}^{k} r_i^3 \right )^{\frac{2}{3}}
   \geq 4 \pi + \sum_{i=2}^{k} 4 \pi r_i^2( 1 -r_i).  
    \label{Eq1}
 \end{equation}
 On the other hand, note that since $r_2 \leq \frac{1}{\sqrt[3]{2}}$,
 for all $1<i \leq k$ we have
 \begin{equation}
   r_i^2 (1 -r_i)  \geq \left (1- \frac{1}{\sqrt[3]{2}} \right) r_i^2. 
     \label{Eq2}
 \end{equation}
 Therefore, by Lemma \ref{lem1} we obtain
 \begin{equation}
       4 \pi (1+N_\eps \varepsilon^2) > \sum_{i=1}^{k}
       E_{\varepsilon}(\Omega_i) \geq 4 \pi + \sum_{i=2}^{k} 4 \pi
       r_i^2( 1 -r_i) \geq 4 \pi + C \sum_{i=2}^k r_i^2,
       \label{Eq3}
 \end{equation}
 for some universal $C > 0$.  Finally, by monotonicity of the
 $l^p$-norm in $p$, this implies
  \begin{equation}
    \sqrt[3]{\sum_{i=2}^{k} r_i^3} \leq  \sqrt{\sum_{i=2}^{k} r_i^2}
    \leq C'  \varepsilon \sqrt{N_\eps}, 
    \end{equation}
    for some $C' > 0$ universal, yielding the claim.
  \end{proof}

  Our next lemma provides further information about the volume of the
  small components of generalized minimizers. Notice that the
  conditions on $N_\eps$ throughout the rest of this section tacitly
  imply that $\eps$ is small.
  
\begin{lemma}
\label{Lem_New}
There exist universal constants $C, \delta>0$ such that for
$1 < N_\eps < \frac{\delta}{\varepsilon^2}$, if
$\{ (\Omega_1,X_1),(\Omega_2,X_2), \ldots, (\Omega_k,X_k) \} $ is a
generalized minimizer then
$| \Omega_i| \leq C |X_i|^{\frac{3}{2}} \varepsilon^3$ for all $i>1$.
\end{lemma}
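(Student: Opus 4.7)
The plan is to show that unless $|\Omega_i|$ is small compared to $N_i^{3/2} \eps^3$, a competitor configuration obtained by \emph{evaporating} the charges of $X_i$ as isolated singletons and \emph{absorbing} the leftover mass into the largest component $\Omega_1$ strictly improves the generalized-minimizer energy. The key mechanism will be an isotropic dilation of $\Omega_1$: scaling $(\Omega_1, X_1)$ by a factor $1+\lambda$ grows the perimeter by $[(1+\lambda)^2 - 1] P(\Omega_1)$, which is linear in the absorbed mass for small $\lambda$, whereas keeping the same mass as a separate component $\Omega_i$ pays the much worse isoperimetric cost $(36\pi)^{1/3}|\Omega_i|^{2/3}$.

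Concretely, I will set $\delta m := m_i - \frac{4\pi}{3} N_i \eps^3 \geq 0$ and pick $\lambda \geq 0$ so that $(1+\lambda)^3 m_1 = m_1 + \delta m$. The competitor then consists of $((1+\lambda)\Omega_1, (1+\lambda) X_1)$, plus $N_i$ far-apart singleton balls $(B_\eps(0), \{0\})$, plus the remaining components $(\Omega_k, X_k)$ for $k \neq 1, i$ left unchanged. Admissibility is immediate from $1+\lambda \geq 1$, which preserves both disjointness and containment of the solvation balls under dilation. Minimality of the original generalized minimizer then yields
\[
E_\eps(\Omega_i, X_i) \leq \bigl[(1+\lambda)^2 - 1\bigr] P(\Omega_1) - \frac{\lambda}{1+\lambda} V_\eps(X_1) + 4\pi \eps^2 N_i.
\]

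To estimate the right-hand side, I drop the negative Coulombic term and apply concavity of $t \mapsto t^{2/3}$ to $(1+\lambda)^2 = (1 + \delta m/m_1)^{2/3}$ to obtain $(1+\lambda)^2 - 1 \leq 2 m_i/(3 m_1)$. Lemma \ref{lem1} bounds $P(\Omega_1) \leq 4\pi(1+\eps^2 N_\eps) \leq 8\pi$ as soon as $\eps^2 N_\eps \leq 1$, and Lemma \ref{Lemma_2} bounds $m_1 \geq 2\pi/3$ once a universal $\delta$ is chosen small enough to force $\eps^3 N_\eps^{3/2} \leq \delta^{3/2}$; hence $[(1+\lambda)^2 - 1] P(\Omega_1) \leq C m_i$ universally. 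Combining with the isoperimetric lower bound $P(\Omega_i) \geq (36\pi)^{1/3} m_i^{2/3}$ turns the displayed inequality into
\[
(36\pi)^{1/3} m_i^{2/3} \leq C m_i + 4\pi \eps^2 N_i.
\]
Since Lemma \ref{Lemma_2} already forces $m_i \leq C \eps^3 N_\eps^{3/2} \leq C \delta^{3/2}$, shrinking $\delta$ universally makes the linear term at most half the left-hand side, yielding $m_i^{2/3} \leq C' \eps^2 N_i$ and hence $|\Omega_i| \leq C'' \eps^3 N_i^{3/2}$.

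The conceptual heart of the argument, and the main obstacle, is identifying the right absorption mechanism. A naive ``bubble attachment'' of the extra mass $\delta m$ to $\Omega_1$ would cost $\BigO{(\delta m)^{2/3}}$ in perimeter and thus fail to beat keeping $\Omega_i$ as is. The isotropic dilation trick converts this into a linear-in-$\delta m$ cost, but succeeds only because we can rely on the universal bounds $m_1 \gtrsim 1$ from Lemma \ref{Lemma_2} and $P(\Omega_1) \lesssim 1$ from Lemma \ref{lem1}; verifying that the dilated charges remain admissibly packed inside the dilated drop is a small but essential check.
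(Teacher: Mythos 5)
Your proof is correct and uses essentially the same mechanism as the paper: evaporate the charges of the $i$-th component as isolated $\eps$-balls and absorb its leftover volume by an isotropic dilation of the remaining configuration, then compare the dilation cost (linear in absorbed mass, thanks to $m_1 \gtrsim 1$ from Lemma~\ref{Lemma_2} and $P(\Omega_1) \lesssim 1$ from Lemma~\ref{lem1}) against the isoperimetric lower bound $P(\Omega_i) \geq (36\pi)^{1/3}|\Omega_i|^{2/3}$. The only cosmetic difference is that the paper rescales \emph{all} surviving components $j \neq i$ by the common factor $c = \bigl((\tfrac{4\pi}{3} - \tfrac{4\pi}{3}|X_i|\eps^3)/(\tfrac{4\pi}{3} - |\Omega_i|)\bigr)^{1/3}$ and then bounds $c^2 \sum_{j\neq i} P(\Omega_j)$ using the total perimeter bound, whereas you rescale only $\Omega_1$ and use $m_1 \geq 2\pi/3$ to control the dilation factor; both routes produce the same universal constants up to an unimportant factor, and your admissibility check (that $1+\lambda \geq 1$ preserves disjointness and containment of the $\eps$-balls under dilation) is the right observation and is implicitly needed in the paper's version as well.
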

\begin{proof}
  For $i>1$ create a minimizing candidate \\
  $\{ (c \Omega_1,c X_1), \ldots, (c \Omega_{i-1}, c X_{i-1}),(c
  \Omega_{i+1}, c X_{i+1}), \ldots, (c \Omega_{k}, c X_{k}), (
  B_{\varepsilon}(0), \{0\})^{|X_i|} \}$, which is obtained by
  deleting the $i$-th component, transferring its charges into $|X_i|$
  non-interacting balls of radius $\eps$ and rescaling the remaining
  components to adjust for the volume change. Here
\begin{align}
  c= \sqrt[3]{\frac{\frac{4 \pi}{3} - \frac{4 \pi}{3} |X_i| 
  \varepsilon^3}{\frac{4 \pi}{3}- |\Omega_i|}} =
  \sqrt[3]{1+\frac{|\Omega_i|- \frac{4 \pi}{3}|X_i|
  \varepsilon^3}{\frac{4 \pi}{3}-|\Omega_i|}}  
  \leq 
  \sqrt[3]{1 + \frac{3}{2 \pi} \left(|\Omega_i|-\frac{4 \pi}{3}|X_i|
  \varepsilon^3 \right)} \\ \leq 1 + \frac{1}{2 \pi}  
  \left(|\Omega_i|-\frac{4 \pi}{3}|X_i| \varepsilon^3 \right), 
\end{align}
where we used that $|\Omega_i| \leq \frac{2 \pi}{3}$ for all $i >
1$. Furthermore, from Lemma \ref{Lemma_2},
$|\Omega_i| \leq C \varepsilon^3 N_\eps^{\frac{3}{2}} \leq C
\delta^{\frac{3}{2}}$ for some universal constant $C>0$. Thus, we can
pick $\delta>0$ so that $|\Omega_i| \leq 1$, which gives us
\begin{align}
\label{equ_end}
  \sum_{j=1}^k E_{\varepsilon} (\Omega_j,X_j) \leq 4 \pi |X_i|
  \varepsilon^2 + \sum_{j \neq i} \left( c^2 P(\Omega_j) +
  V_{\varepsilon}(X_j) \right) \notag \\
  \leq  4 \pi |X_i| \varepsilon^2 +  \left (1 +2
  \left(|\Omega_i|-\frac{4 \pi}{3}|X_i| \varepsilon^3 \right) \right)
  \sum_{j \neq i} P(\Omega_j) + \sum_{j \neq i} 
  V_{\varepsilon}(X_j).   
\end{align}
From Lemma $ \ref{lem1}$, we can pick $C' > 0$ so that
$ \sum_{j=1}^k P(\Omega_j) \leq 4 \pi +4 \pi \varepsilon^2 N_\eps \leq 
4 \pi(1+ \delta) \leq C^\prime $. This gives us that
\begin{equation}
  \sum_{j=1}^k E_{\varepsilon} (\Omega_j,X_j) \leq 4 \pi |X_i|
  \varepsilon^2 -P(\Omega_i)+ 2 C^\prime  \left(|\Omega_i|-\frac{4
      \pi}{3}|X_i| \varepsilon^3 \right) +\sum_{j=1}^k 
  E_{\varepsilon} (\Omega_j,X_j). 
     \label{Lem4_eq1}
\end{equation}
Thus, with the help of the isoperimetric inequality for $\Omega_i$ we
have
\begin{equation}
\label{vol_i_est}
   \sqrt[3]{36 \pi}|\Omega_i|^{\frac{2}{3}} - 2 C^\prime  |\Omega_i|
   \leq \sqrt[3]{36 \pi}|\Omega_i|^{\frac{2}{3}} - 2 C^\prime
   \left(|\Omega_i|-\frac{4 \pi}{3}|X_i| \varepsilon^3 \right) \leq 
  4 \pi |X_i|\varepsilon^2. 
\end{equation}
Finally, since $|\Omega_i| \leq C \delta^{\frac{3}{2}}$, possibly
decreasing $\delta$ we can ensure that
$|\Omega_i|^{\frac{1}{3}}<\frac{1}{C^\prime}$, yielding the desired
inequality.
\end{proof}

Next we rule out the case where our generalized minimizer
$\{ (\Omega_1,X_1), \ldots, (\Omega_k,X_k)\}$ contains $X_i$'s that
are null, which means that each component of the generalized minimizer
has to contain at least one charge, provided that $N_\eps$ is not too
large and $\varepsilon$ is sufficiently small. In this case, if a
small component contains only one charge then it is a ball of radius
$\eps$. 

\begin{lemma}[]
\label{2}
There exist a universal constant $\delta>0$ such that for
$1 < N_\eps < \frac{\delta}{\varepsilon^2}$, if
$\{ (\Omega_1,X_1), \ldots, (\Omega_k,X_k)\}$ is a generalized
minimizer then $k \leq N_\eps$, and each $X_i$ for $1 \leq i \leq k$
is non-empty. Furthermore, if $|X_i|= 1$ for some $1<i\leq k$, then
$|\Omega_i|= \frac{4 \pi}{3} \varepsilon^3$.
\end{lemma}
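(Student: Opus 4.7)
The plan is to prove all three assertions by contradicting generalized minimality: assuming any one fails, I exhibit a family $(\Omega_i', X_i') \in \mathcal A_{m_i', N_i', \eps}$ whose summed energies fall strictly below that of the given generalized minimizer. The inequality $k \leq N_\eps$ is a pigeonhole consequence of every $X_i$ being non-empty, so the substantive content is (A) non-emptiness of each $X_i$ and (B) the shape identity $|\Omega_i| = \tfrac{4\pi}{3}\eps^3$ when $|X_i|=1$ and $i>1$.

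For (A), suppose $X_{i_0} = \varnothing$ for some $i_0$; since $\Omega_{i_0}$ then minimizes perimeter under only a volume constraint, it is a ball. If $i_0 > 1$, Lemma \ref{Lemma_2} combined with $N_\eps < \delta/\eps^2$ yields $|\Omega_{i_0}| \leq C\delta^{3/2}$, and the competitor $\{(c\Omega_1, cX_1)\} \cup \{(\Omega_i, X_i)\}_{i \neq 1, i_0}$ with $c^3 = 1 + |\Omega_{i_0}|/|\Omega_1|$ has energy change at most $(c^2-1)P(\Omega_1) - P(\Omega_{i_0})$, since the Coulombic term of $X_1$ only drops under $c > 1$. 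Bounding $(c^2-1) \leq \tfrac{2|\Omega_{i_0}|}{3|\Omega_1|}$, $P(\Omega_1) \leq 5\pi$ via Lemma \ref{lem1}, $|\Omega_1| \geq \tfrac{4\pi}{3} - C\delta^{3/2}$ via Lemma \ref{Lemma_2}, and $P(\Omega_{i_0}) = (36\pi)^{1/3}|\Omega_{i_0}|^{2/3}$, this factors as $|\Omega_{i_0}|^{2/3}\bigl(C'|\Omega_{i_0}|^{1/3} - (36\pi)^{1/3}\bigr) < 0$ for $\delta$ sufficiently small.

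The harder case is $i_0 = 1$, where $\Omega_1$ is a large ball with no charges. Pick $j > 1$ with $X_j \neq \varnothing$, which exists since $N_\eps \geq 2$. If $|X_j|=1$, the merge competitor $\{(c\Omega_1, \{x'\})\} \cup \{(\Omega_i, X_i)\}_{i \neq 1, j}$ with $c^3 = 1 + |\Omega_j|/|\Omega_1|$ and $x'$ any admissible point in $c\Omega_1$ gives $\Delta E = 4\pi\bigl[(r_1^3 + r_j^3)^{2/3} - r_1^2 - r_j^2\bigr] < 0$ by strict concavity of $t \mapsto t^{2/3}$. If $|X_j| \geq 2$, I instead redistribute a single unit of charge: form $\{(\Omega_1, \{x'\})\} \cup \{(\Omega_j^*, X_j^*)\} \cup \{(\Omega_i, X_i)\}_{i \neq 1, j}$ where $x'$ is any admissible point in $\Omega_1$ and $(\Omega_j^*, X_j^*)$ is a minimizer in $\mathcal A_{|\Omega_j|, |X_j|-1, \eps}$ provided by Theorem \ref{exgen}. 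Since a single charge contributes zero Coulombic and the ball $\Omega_1$ already realizes the isoperimetric minimum, $(\Omega_1, \{x'\})$ is itself a minimizer over $\mathcal A_{|\Omega_1|, 1, \eps}$, so the net change equals $E^*_{|X_j|-1}(|\Omega_j|) - E^*_{|X_j|}(|\Omega_j|)$ (writing $E^*_N(v)$ for the infimum of $E_\eps$ over $\mathcal A_{v, N, \eps}$); this is strictly negative since any $|X_j|$-minimizer with one charge removed is an admissible competitor for the $(|X_j|-1)$-problem with strictly smaller $V_\eps$.

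For (B), $|X_i|=1$ with $i>1$ forces $\Omega_i$ to be a ball of radius $r_i = (3|\Omega_i|/(4\pi))^{1/3} \geq \eps$ centered at $x_i$ by the isoperimetric characterization with obstacle $B_\eps(x_i)$. If $r_i > \eps$, the competitor replaces $(\Omega_i, \{x_i\})$ by $(B_\eps(x_i), \{x_i\})$ and $(\Omega_1, X_1)$ by $(c\Omega_1, cX_1)$ with $c^3 = 1 + \tfrac{4\pi}{3}(r_i^3-\eps^3)/|\Omega_1|$; the same estimates as in (A) yield $\Delta E \leq C(r_i^3 - \eps^3) - 4\pi(r_i^2 - \eps^2)$, which factors as $(r_i-\eps)$ times a quantity strictly negative when $r_i$ is small, and $r_i \leq C'\delta^{1/2}$ is guaranteed by Lemma \ref{Lemma_2}. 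The main obstacle throughout is case $i_0 = 1$ of (A): one cannot rescale the small $\Omega_j$ up to absorb the large $\Omega_1$ without blowing up its perimeter, and a direct geometric fit of $\Omega_j$ inside $\Omega_1$ would require a diameter bound on $\Omega_j$ that is not available at this stage. Moving a single unit of charge across components rather than moving geometric mass sidesteps the diameter issue entirely.
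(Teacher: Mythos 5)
Your proof is correct, and it takes a genuinely different and in one respect more complete route than the paper's.

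The paper dispatches non-emptiness of $X_i$ in one line by citing Lemma~\ref{Lem_New}: if $X_i=\varnothing$ then $|\Omega_i|\le C|X_i|^{3/2}\eps^3=0$, contradicting $m_i>0$. But Lemma~\ref{Lem_New} is stated and proved only for $i>1$ (its scaling estimate $c\le 1+\frac{3}{2\pi}(\cdots)$ uses $|\Omega_i|\le\frac{2\pi}{3}$, which is only available for the non-largest components), so the case $X_1=\varnothing$ is not literally covered by the paper's proof. Your observation that this is the genuinely delicate case, and your resolution by transferring a single charge from some $\Omega_j$ (with $|X_j|\ge 2$) into the charge-free ball $\Omega_1$ — which strictly decreases $V_\eps$ without touching any perimeter — is an elegant way to close that gap; it avoids any attempt to geometrically absorb the large ball into a small component (which, as you note, would blow up the rescaling). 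Your sub-case $|X_j|=1$ via ball-merging and subadditivity of $t\mapsto t^{2/3}$ is also clean. For $i_0>1$ you essentially re-derive, by an explicit competitor, the conclusion the paper reads off from Lemma~\ref{Lem_New}.

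For part~(B) the two arguments diverge: the paper rearranges the inequality \eqref{vol_i_est} (already established inside the proof of Lemma~\ref{Lem_New}) with $|X_i|=1$ and shows the resulting scalar inequality forces $|\Omega_i|=\frac{4\pi}{3}\eps^3$, whereas you exploit that $\Omega_i$ is a ball (since a single charge contributes no Coulombic) and build a competitor shrinking it to $B_\eps(x_i)$ while rescaling $\Omega_1$. Both are correct; the paper's is shorter because it reuses machinery, yours is more self-contained. One cosmetic remark: when citing Theorem~\ref{exgen} for the minimizer in $\mathcal A_{|\Omega_j|,|X_j|-1,\eps}$, it only provides a \emph{generalized} minimizer (a finite family of components), but this is harmless since you only need the infimum value $E^*_{|X_j|-1}(|\Omega_j|)$, which the generalized minimizer realizes as a sum, and the resulting family still yields an admissible competitor by placing its pieces far apart.
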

\begin{proof}
  First note that if $X_i$ is empty, then Lemma \ref{Lem_New} implies
  that $|\Omega_i| = 0$, a contradiction. Thus, all that remains, is
  to show that if $|X_i|= 1$ for some $1<i\leq k$, then
  $|\Omega_i|= \frac{4 \pi}{3} \varepsilon^3$. To do this, note that
  when $|X_i|= 1$, rearranging \eqref{vol_i_est} provides
  \begin{equation}
    \label{K_1_inq}
       \sqrt[3]{36 \pi}|\Omega_i|^{\frac{2}{3}} -  4 \pi \varepsilon^2
       \leq  2 C^\prime  \left(|\Omega_i|-\frac{4 \pi}{3}
         \varepsilon^3 \right), 
   \end{equation}
   which implies that $|\Omega_i|=\frac{4 \pi}{3} \varepsilon^3 $
   whenever $\delta$ is chosen to ensure that
   $|\Omega_i|^{\frac{1}{3}}<\frac{1}{C^\prime}$. To see this, note
   that if
   $ \frac{4 \pi}{3} \varepsilon^3
   <|\Omega_i|<\frac{1}{(C^\prime)^3}$, then
\begin{equation}
  \int_{\frac{4 \pi}{3 }\eps^3}^{|\Omega_i|} C^\prime dt <
  \int_{\frac{4 \pi}{3 }\eps^3}^{|\Omega_i|} t^{-\frac{1}{3}} dt <
  \left(\frac{4 \pi}{3 }\right)^{\frac{1}{3}} \int_{\frac{4 \pi}{3
    }\eps^3}^{|\Omega_i|} t^{-\frac{1}{3}} dt ,  
\end{equation}
which implies
\begin{equation}
\label{inequality_con}
C^\prime  \left(|\Omega_i|-\frac{4 \pi}{3} \varepsilon^3 \right) <
\frac{1}{2} \left(  \sqrt[3]{36 \pi}|\Omega_i|^{\frac{2}{3}} -  4 \pi
  \varepsilon^2 \right). 
\end{equation}
Thus, \eqref{inequality_con} contradicts \eqref{K_1_inq}, and we
conclude that $|\Omega_i|=\frac{4 \pi}{3} \varepsilon^3 $.
\end{proof}

Lastly, we state a lower density estimate for generalized minimizers
which will be useful for both the $N_\eps =2$ and the
$N_\eps \gg 1$ cases.

\begin{lemma}
 \label{density_est}
 There exists a universal constant $C>0$ such that for any $M>0$, if
 $N_\eps <\frac{M}{\varepsilon^2}$,
 $\{ (\Omega_1,X_1), \ldots, (\Omega_k,X_k)\}$ is a generalized
 minimizer,
 $x_0 \in \overline \Omega_i \setminus \bigcup\limits_{x \in X_i}
 \overline{ B_{\varepsilon}(x)}$ for some $1\leq i \leq k$, and
 $r< \min \left (R, \min\limits_{x \in X_i} |x_0- x |-\varepsilon
 \right) $, where $R > 0$ depends only on $M$, then
 \begin{equation}
     | \Omega_i \cap B_{r}(x_0) |> C r^3.
 \end{equation}
 \end{lemma}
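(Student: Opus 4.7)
The plan is to show that each component $\Omega_i$ is a $(\Lambda, r_0)$-minimizer of the perimeter in the open set $\mathbb{R}^3 \setminus \bigcup_{x_j \in X_i} \overline{B_\eps(x_j)}$, with $\Lambda$ and $r_0$ depending only on $M$, after which the estimate $|\Omega_i \cap B_r(x_0)| \geq Cr^3$ follows from the classical density estimate \cite[Theorem 21.11]{maggi}. The assumption $r < \min_{x_j \in X_i}|x_0 - x_j| - \eps$ ensures that $B_r(x_0)$ is disjoint from every solvation ball $B_\eps(x_j)$ with $x_j \in X_i$, so that perturbing $\Omega_i$ inside $B_r(x_0)$ does not disturb the admissibility of the charge configuration.

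To derive the quasi-minimality, I would compare the generalized minimizer with a competitor obtained by removing $\Omega_i \cap B_r(x_0)$ from the $i$-th component and compensating for the lost volume $v := |\Omega_i \cap B_r(x_0)|$ by a uniform dilation of the largest component, replacing $(\Omega_1, X_1)$ by $(c\Omega_1, cX_1)$ with $c := (1 + v/|\Omega_1|)^{1/3} \geq 1$ (in the case $i = 1$, the same dilation is applied to $\Omega_1 \setminus B_r(x_0)$ itself). Admissibility is maintained because $B_\eps(cx) \subset cB_\eps(x) \subset c\Omega_1$ for $c \geq 1$, and pairwise disjointness of the solvation balls is preserved under dilation. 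The energy change of the rescaled $\Omega_1$ piece is at most $(c^2 - 1) P(\Omega_1) \leq Cv\, P(\Omega_1)/|\Omega_1|$, since $V_\eps(cX_1) = V_\eps(X_1)/c \leq V_\eps(X_1)$, while removal of $\Omega_i \cap B_r(x_0)$ contributes $\mathcal{H}^2(\Omega_i^{(1)} \cap \partial B_r(x_0)) - P(\Omega_i, B_r(x_0))$ for a.e.\ $r$. Minimality of the generalized configuration then forces
\begin{equation*}
P(\Omega_i, B_r(x_0)) \leq \mathcal{H}^2(\Omega_i^{(1)} \cap \partial B_r(x_0)) + Cv\, \frac{P(\Omega_1)}{|\Omega_1|}.
\end{equation*}

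By Lemma \ref{lem1}, $P(\Omega_1) \leq 4\pi(1 + M)$, and by Lemma \ref{Lemma_2}, $|\Omega_1| \geq 4\pi/3 - CM^{3/2}$, so the ratio $P(\Omega_1)/|\Omega_1|$ is bounded by a quantity $\Lambda(M)$ that depends only on $M$. This establishes the $(\Lambda(M), r_0)$-minimality of $\Omega_i$ in $\mathbb{R}^3 \setminus \bigcup_{x_j \in X_i} \overline{B_\eps(x_j)}$, and the desired density estimate $|\Omega_i \cap B_r(x_0)| \geq Cr^3$ with a universal constant $C > 0$ then follows from \cite[Theorem 21.11]{maggi} upon taking $R := \min(1/\Lambda(M), r_0)$, which depends only on $M$.

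The main obstacle is maintaining a positive lower bound for $|\Omega_1|$ in terms of $M$ alone, independently of $\eps$. Lemma \ref{Lemma_2} supplies exactly this as long as $CM^{3/2}$ does not exhaust the total mass $4\pi/3$, so that the rescaling-based volume transfer may degrade as $M$ grows and $R(M)$ must be taken correspondingly small—which is precisely what the statement of the lemma allows.
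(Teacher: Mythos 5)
Your overall strategy (cut a ball, restore volume by dilation, deduce a quasi-minimality inequality, conclude with a differential inequality / density estimate) is the same as the paper's, and most of the details are sound. However, there is one genuine gap and one smaller imprecision.

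The genuine gap concerns your choice of competitor. You restore the lost volume $v = |\Omega_i \cap B_r(x_0)|$ by dilating only $\Omega_1$, which gives a perimeter penalty of order $v\,P(\Omega_1)/|\Omega_1|$, and you then need a lower bound on $|\Omega_1|$ in terms of $M$ alone. Lemma~\ref{Lemma_2} only provides $|\Omega_1| \geq \frac{4\pi}{3} - C\eps^3 N_\eps^{3/2} \geq \frac{4\pi}{3} - CM^{3/2}$, which is vacuous once $CM^{3/2} \geq \frac{4\pi}{3}$. Shrinking $R(M)$ cannot salvage this: if $|\Omega_1|$ has no lower bound depending only on $M$, the ratio $P(\Omega_1)/|\Omega_1|$ is not bounded by any $\Lambda(M)$, and the argument breaks. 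The paper sidesteps this by dilating \emph{all} components simultaneously. Then $c = \bigl(\frac{4\pi/3}{4\pi/3 - v}\bigr)^{1/3}$ depends only on the \emph{total} mass, which is fixed at $\frac{4\pi}{3}$, so $c \leq 1 + v$ as soon as $v$ is small (guaranteed by $r < R$ with $R$ small). The resulting perimeter penalty $(c^2-1)\sum_j P(\Omega_j)$ is then controlled via the total perimeter bound $\sum_j P(\Omega_j) < 4\pi(1 + M)$ from Lemma~\ref{lem1}, with no reference to $|\Omega_1|$. This is what makes the constant $R$ depend only on $M$ for \emph{all} $M > 0$, as claimed. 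You should adopt the global rescaling to repair the argument.

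A smaller issue: the comparison you perform (only with the competitor $\Omega_i \setminus B_r(x_0)$) establishes a one-sided, outward quasi-minimality inequality, not the full $(\Lambda, r_0)$-minimality that \cite[Theorem~21.11]{maggi} takes as hypothesis. For the lower volume density bound this one-sided inequality is in fact sufficient, but one should then run the isoperimetric-plus-differential-inequality argument directly (as the paper does with $U(r) = |\Omega_i \cap B_r(x_0)|$ and $\frac{dU}{dr} \geq C U^{2/3}$) rather than citing a theorem whose hypotheses are not verified.
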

 \begin{proof}
   We may assume for convenience that $R < \frac{1}{2}$ and that
   $B_{R+\eps}(x_0) \cap X_i = \varnothing$. Consider
   $\{ (c \Omega_1,c X_1), \ldots, ( c (\Omega_i \setminus
   B_{r}(x_0)), c X_i), \ldots, (c \Omega_k, c X_k) \}$, where $c > 1$
   is defined as
\begin{equation}
  c: =\sqrt[3]{\frac{\frac{4 \pi}{3}} {\frac{4 \pi}{3}- |\Omega_i \cap
      B_{r}(x_0)|}} \leq 1+ |\Omega_i \cap B_{r}(x_0)|, 
\end{equation}
as a possible minimizing candidate. Then
\begin{multline}
  \label{density_estimate1}
  P(c( \Omega_i \setminus B_{r}(x_0)))+ \sum_{j \neq i} P(c\Omega_j)
  \\ \leq \left( 1+|\Omega_i \cap B_{r}(x_0)| \right)^2 \left (
    P(\Omega_i \setminus B_{r}(x_0)) + \sum_{j \neq i} P(\Omega_j)
  \right) \\ \leq ( 1+ 3|\Omega_i \cap B_{r}(x_0)|) \left ( P(\Omega_i
    \setminus B_{r}(x_0)) + \sum_{j \neq i} P(\Omega_j) \right).
\end{multline}
Furthermore, applying the isoperimetric inequality to the set
$ \Omega_i \cap B_{r}(x_0)$ we have that
\begin{equation}
  P(\Omega_i \setminus B_{r}(x_0)) \leq P(\Omega_i) +
  2 \mathcal{H}^2(\Omega_i \cap \partial B_{r}(x_0))- \sqrt[3]{36
    \pi} | \Omega_i \cap B_{r}(x_0)|^{\frac{2}{3}}. 
   \label{density_estimate2}
\end{equation}
Thus combining \eqref{density_estimate1} and \eqref{density_estimate2}
and using Lemma \ref{lem1}, we get
\begin{multline}
   \label{density_estimate3}
   P(c( \Omega_i \setminus B_{r}(x_0))) + \sum_{j \neq
     i} P(c\Omega_j) \\
   \leq ( 1+ 3 |\Omega_i \cap B_{r}(x_0)|) \left( 2
     \mathcal{H}^2(\Omega_i \cap \partial B_{r}(x_0))- \sqrt[3]{36
       \pi} | \Omega_i \cap B_{r}(x_0)|^{\frac{2}{3}} +\sum_{j=1}^{k}
     P(\Omega_j)\right) \\
   \leq \left( - \sqrt[3]{36 \pi} + 3|\Omega_i \cap
     B_{r}(x_0)|^{\frac{1}{3}} \sum_{j=1}^{k} P(\Omega_j)\right)
   |\Omega_i \cap B_{r}(x_0)|^{\frac{2}{3}} + C_1
   \mathcal{H}^2(\Omega_i \cap \partial B_{r}(x_0)) +\sum_{j=1}^{k}
   P(\Omega_j)\\
   \leq C_1 \mathcal{H}^2(\Omega_i \cap \partial B_{r}(x_0))-C_2
   |\Omega_i \cap B_{r}(x_0)|^{\frac{2}{3}}+\sum_{j=1}^{k} P(\Omega_j)
   ,
\end{multline}
for some universal constants $C_1$, $C_2>0$ whenever $r<R$ is small
enough depending only on $M$.

By \eqref{density_estimate3} we have
\begin{multline}
  \sum_{j=1}^{k} E_{\varepsilon}(\Omega_j,X_j) \leq E_{\varepsilon}
  \left(c( \Omega_i \setminus  B_{r}(x_0)) ,cX_i \right)+\sum_{j
    \neq i} E_{\varepsilon}(c \Omega_j,cX_j ) \\ \leq C_1
  \mathcal{H}^2(\Omega_i \cap \partial B_{r}(x_0))-C_2 |\Omega_i \cap
  B_{r}(x_0)|^{\frac{2}{3}}+\sum_{j=1}^{k}
  E_{\varepsilon}(\Omega_j,X_j) ,
\end{multline}
which implies
\begin{equation}
    C_1  \mathcal{H}^2(\Omega_i \cap \partial B_{r}(x_0)) \geq C_2
    |\Omega_i \cap B_{r}(x_0)|^{\frac{2}{3}}. 
\end{equation}
Finally, by letting $U(r) := |\Omega_i \cap B_{r}(x_0)| > 0$ and
applying Fubini's theorem with the co-area formula to obtain
$\frac{dU(r)}{dr}=\mathcal{H}^2(\Omega_i \cap \partial B_{r}(x_0)) $
for a.e. $r \in (0, R)$, we arrive at
\begin{equation}
    \frac{dU(r)}{dr} \geq C U^{\frac{2}{3}},
\end{equation}
for some universal constant $C>0$.  Integrating this inequality yields
the claim.
\end{proof}

\subsection{Localizing the minimizers}

In this subsection we perform a suitable localization of minimizers,
which leads to outer convergence of minimizers to a unit ball as
$\eps \to 0$. For $x_0 \in \R^3$ and $r > 0$, we define the spherical
cut of a set-charge pair
$(\Omega,X) \in \mathcal{A}_{m,N, \varepsilon}$ by the ball
$ B_{r}(x_0)$ to be the two set-charge pairs
$( \Omega_{x_0,r}^{\pm}, X_{x_0,r}^{\pm})$ defined as follows: If
$\mathcal{H}^2 \big(\partial B_{r}(x_0) \cap \big ( \cup_{x \in X}
B_{\varepsilon}(x) \big) \big)=0 $, then
 
 \begin{equation}
          \Omega_{x_0,r}^{+}= \Omega \cap B_{r}(x_0), 
 \end{equation}
 \begin{equation}
     X_{x_0,r}^{+}=\{x \in X \ : \ B_{\varepsilon}(x) \subset
     \Omega_{x_0,r}^{+} \}, 
 \end{equation}
 \begin{equation}
          \Omega_{x_0,r}^{-}=  \Omega \setminus \Omega_{x_0,r}^{+}, 
 \end{equation}
 and
 \begin{equation}
     X_{x_0,r}^{-}= X \setminus  X_{x_0,r}^{+}.
 \end{equation}
 If, on the contrary,
 $\mathcal{H}^2 \big(\partial B_{r}(x_0) \cap \big ( \cup_{x \in X}
 B_{\varepsilon}(x) \big) \big)>0 $, then we set
\begin{equation}
  X_{x_0,r}^{+}=\left\{x \in X \ :  \mathcal{H}^2 (\partial
    B_{\varepsilon}(x) \cap B_{r}(x_0)) > \mathcal{H}^2 (\partial
    B_{\varepsilon}(x) \cap B_{r}^{c}(x_0)) \right\}, 
 \end{equation} 
 \begin{equation}
          X_{x_0,r}^{-}=X \setminus  X_{x_0,r}^{+},
 \end{equation}
 \begin{equation}
      \Omega_{x_0,r}^{+}= \left ( \left(B_{r}(x_0) \cap \Omega \right)
        \cup \left (\cup_{x \in  X_{x_0,r}^{+}} B_{\varepsilon}(x)
        \right)  \right) \setminus \left( \cup_{x\in  X_{x_0,r}^{-}}
        B_{\varepsilon}(x) \right) 
 \end{equation}
 \begin{equation}
        \Omega_{x_0,r}^{-}= \Omega \setminus    \Omega_{x_0,r}^{+}. 
 \end{equation}
 For these spherical cuts we have the following result.

 \begin{lemma}
 \label{Cutting_perimeter}
 Let $\eps, m > 0$, $N \in \mathbb{N}$, and let $(\Omega, X)$ be a
 classical minimizer of $E_\eps$ over $\mathcal{A}_{m,N,\eps}$. Then
 if $x_0 \in \R^3$ and $r>0$, we have
\begin{equation}
  \label{PP4}
  P ( \Omega_{x_0,r}^{+}) +  P ( \Omega_{x_0,r}^{-})  \leq P(\Omega)+4
  \mathcal{H}^2 ( \Omega \cap \partial B_{r}(x_0)). 
\end{equation}
 \end{lemma}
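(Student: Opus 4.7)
The plan is to split into two cases following the definition of the spherical cut. When $\mathcal H^2(\partial B_r(x_0)\cap\bigcup_{x\in X} B_\eps(x))=0$, the cut sets $\Omega^\pm_{x_0,r}$ coincide with the naive truncations $A:=\Omega\cap B_r(x_0)$ and $B:=\Omega\setminus B_r(x_0)$, and the classical sphere-cut identity $P(A)+P(B)=P(\Omega)+2\mathcal H^2(\Omega\cap\partial B_r(x_0))$ already implies \eqref{PP4} with constant $2$ in place of $4$.

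When the sphere straddles some solvation balls, let $S$ denote the (finite) set of indices $i$ for which $B_\eps(x_i)$ is split by $\partial B_r(x_0)$. The first step is to compare $\Omega^\pm_{x_0,r}$ to $A$ and $B$: for each $i\in S$ the construction absorbs the whole of $B_\eps(x_i)$ into $\Omega^+$ (if $i\in X^+$) or into $\Omega^-$ (if $i\in X^-$), and removes the corresponding region from the other side. The key observation for the bookkeeping is that the spherical disk $B_\eps(x_i)\cap\partial B_r(x_0)$ belongs to both $\partial^* A$ and $\partial^*(B_\eps(x_i)\cap B_r^c(x_0))$ with opposite orientations and therefore cancels when these sets are merged. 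A direct computation with measure-theoretic boundaries then shows that each such modification changes $P(A)+P(B)$ by exactly $+2(\sigma_i-d_i)$, where $d_i:=\mathcal H^2(B_\eps(x_i)\cap\partial B_r(x_0))$ is the spherical disk area and $\sigma_i$ is the area of the cap of $\partial B_\eps(x_i)$ lying on the \emph{minority} side (the one opposite to the majority-rule assignment). Summing over $i\in S$ and invoking the standard sphere-cut identity yields
\[
P(\Omega^+)+P(\Omega^-)=P(\Omega)+2\mathcal H^2(\Omega\cap\partial B_r(x_0))+2\sum_{i\in S}(\sigma_i-d_i).
\]

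The main obstacle, which is also what forces the constant $4$, is the purely geometric inequality $\sigma_i\leq 2d_i$ for every split ball. I would prove it by explicit parametrization: placing $x_0=0$ and $x_i=De_1$ with $D=|x_i-x_0|$, the intersection $\partial B_\eps(x_i)\cap\partial B_r(x_0)$ is a circle lying in the plane $\{x_1=a\}$ with $a=(D^2+r^2-\eps^2)/(2D)$. Elementary spherical-cap formulas give $d_i=2\pi r(r-a)$ and $\sigma_i=2\pi\eps h$, where $h=\min(a+\eps-D,\,\eps+D-a)$. Case-splitting on whether $D^2+\eps^2<r^2$ (so $i\in X^+$ and $\sigma_i$ is the cap away from $x_0$) or $D^2+\eps^2>r^2$ (so $i\in X^-$ and $\sigma_i$ is the cap toward $x_0$), the inequality $\sigma_i\leq 2d_i$ reduces in either case to
\[
(r+\eps)(2r-\eps)^2 \geq (r-\eps)(2r+\eps)^2,
\]
which follows from the identity $(r+\eps)(2r-\eps)^2-(r-\eps)(2r+\eps)^2=2\eps^3\geq 0$.

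To conclude, since the balls $B_\eps(x_i)$, $i\in S$, are pairwise disjoint and contained in $\Omega$, the spherical disks $B_\eps(x_i)\cap\partial B_r(x_0)$ are pairwise disjoint subsets of $\Omega\cap\partial B_r(x_0)$, whence $\sum_{i\in S}d_i\leq\mathcal H^2(\Omega\cap\partial B_r(x_0))$. Combining this with $\sigma_i\leq 2d_i$ gives
\[
P(\Omega^+)+P(\Omega^-)\leq P(\Omega)+2\mathcal H^2(\Omega\cap\partial B_r(x_0))+2\sum_{i\in S}d_i\leq P(\Omega)+4\mathcal H^2(\Omega\cap\partial B_r(x_0)),
\]
which is \eqref{PP4}. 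The classical-minimizer hypothesis is only used, through Theorem \ref{exgen}, to guarantee the $C^{1,1}$ regularity of $\partial\Omega$ needed to run the measure-theoretic boundary computations without pathology; the estimate itself is purely geometric.
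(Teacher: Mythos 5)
Your argument is correct and follows the same basic strategy as the paper: identify the exact correction to the naive sphere-cut identity coming from the straddled solvation balls, and reduce the claim to the pointwise estimate $\sigma_i \leq 2d_i$ (minority cap of $\partial B_\eps(x_i)$ versus the disk $B_\eps(x_i)\cap\partial B_r(x_0)$), then sum using disjointness of the $B_\eps(x_i)$. The bookkeeping step --- that each split ball changes $P(A)+P(B)$ by exactly $2(\sigma_i - d_i)$ --- matches the paper's explicit expressions for $P(\Omega^\pm_{x_0,r})$.

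The one place you genuinely diverge is the proof of $\sigma_i \leq 2d_i$. You place the configuration in coordinates, compute both cap areas exactly in terms of $r,\eps,D$, and drive the estimate to the algebraic identity $(r+\eps)(2r-\eps)^2-(r-\eps)(2r+\eps)^2=2\eps^3$. This works, though your reduction to that single inequality is tailored to the regime $r>\eps$; in the degenerate regimes ($r\leq\eps$, or $2r\leq\eps$) the case $i\in X^-$ needs a short separate check (which does go through, since the relevant linear-in-$D$ inequality holds at both endpoints of the admissible interval). The paper's route is leaner and avoids $r$ and $D$ entirely: it bounds $d_i$ from below by the \emph{flat} disk area $\pi a_x^2$, writes $\sigma_i = 2\pi\eps h_x$, and uses $a_x^2 = h_x(2\eps - h_x)$ to get $\sigma_i/d_i \leq 2\eps/(2\eps - h_x)\leq 2$ since the minority cap has $h_x\leq\eps$. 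That observation uses only the geometry of the small ball $B_\eps(x_i)$ and is uniform in $r$ and $D$, so it sidesteps the case analysis. Your final remark that the minimizer hypothesis enters only via regularity of $\partial\Omega$ is accurate; the estimate is purely geometric.
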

 
 \begin{proof}
   By construction, we have
   \begin{align}
     \label{eq:PpOm}
     P ( \Omega_{x_0,r}^{+}) = \mathcal H^2(\partial \Omega \cap
     B_r(x_0)) + \mathcal H^2(\partial B_r(x_0) \cap \Omega) - \sum_{x
     \in X} \mathcal H^2(\partial B_r(x_0) \cap B_\eps(x)) \notag \\
     + \sum_{x \in X_{x_0,r}^+} \mathcal H^2(\partial B_\eps(x) \cap
     B_r^c(x_0)) +  \sum_{x \in X_{x_0,r}^-} \mathcal H^2(\partial
     B_\eps(x) \cap B_r(x_0)), 
   \end{align}
   and
   \begin{align}
     \label{eq:PmOm}
     P ( \Omega_{x_0,r}^{-}) = \mathcal H^2(\partial \Omega \cap
     B_r^c(x_0)) + \mathcal H^2(\partial B_r(x_0) \cap \Omega) - \sum_{x
     \in X} \mathcal H^2(\partial B_r(x_0) \cap B_\eps(x)) \notag \\
     + \sum_{x \in X_{x_0,r}^+} \mathcal H^2(\partial B_\eps(x) \cap
     B_r^c(x_0)) +  \sum_{x \in X_{x_0,r}^-} \mathcal H^2(\partial
     B_\eps(x) \cap B_r(x_0)).
   \end{align}
   Observe that for each $x \in X_{x_0,r}^+$ the set
   $\partial B_\eps(x) \cap B_r^c(x_0)$ is either empty or a spherical
   cap with the base radius $a_x \in (0, \eps]$ and height
   $h_x \in (0, \eps]$. Therefore, we have
   $\mathcal H^2( \partial B_\eps(x) \cap B_r^c(x_0) ) = 2 \pi \eps
   h_x$ and
   $\mathcal H^2(\partial B_r(x_0) \cap B_\eps(x)) \geq \pi
   a_x^2$. Noting that $(\eps - h_x)^2 + a_x^2 = \eps^2$, we then
   conclude that
   \begin{align}
     \frac{\mathcal{H}^2 (\partial B_{\varepsilon}(x) \cap
     B_{r}^{c}(x_0))}{\mathcal{H}^2 (\partial B_{r}(x_0) \cap
     B_{\varepsilon}(x))} \leq \frac{2 \varepsilon }{2 \varepsilon -h_x}
     \leq 2.
   \end{align}
   By a similar argument, for every $x \in X_{x_0,r}^-$ we also have
   \begin{align}
     \frac{\mathcal{H}^2 (\partial B_{\varepsilon}(x) \cap
     B_{r}(x_0))}{\mathcal{H}^2 (\partial B_{r}(x_0) \cap
     B_{\varepsilon}(x))} 
     \leq 2.
   \end{align}
   Thus, we obtain
   \begin{align}
     P ( \Omega_{x_0,r}^{+}) +  P ( \Omega_{x_0,r}^{-}) \leq P(\Omega)
     + 2 \mathcal H^2(\partial B_r(x_0) \cap \Omega) + 2 \sum_{x \in X}
     \mathcal H^2(\partial B_r(x_0) \cap B_\eps(x)) \notag \\
     \leq  P(\Omega)
     + 4 \mathcal H^2(\partial B_r(x_0) \cap \Omega),
   \end{align}
   which is the desired inequality.   
  \end{proof}

  Next we obtain an estimate for the $L^1$ convergence of classical
  minimizers to a ball as $\eps \to 0$.
  
 \begin{lemma}
\label{Ball}
There exist universal constants $C, C^\prime, \delta_0$ such that if
$\delta < \delta_0$, $1 < N_\eps < \frac{\delta}{\varepsilon^2}$, and
$(\Omega,X)$ is a classical minimizer, then
\begin{equation}
  |\Omega \cap B_{r^{\ast} }^{c}(x_0) | < C N_\eps^{\frac{3}{2}} \varepsilon^3,
\end{equation}
for some $1 \leq r^{\ast} \leq 1+C^\prime \delta^{\frac{1}{6}}$ and
some $x_0 \in \R^3$.
\end{lemma}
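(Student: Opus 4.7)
The plan is to combine the quantitative isoperimetric inequality, which provides $L^1$-proximity of $\Omega$ to a unit ball, with a spherical-cut competitor to derive a first-order differential inequality for the outer volume $v(r):=|\Omega\cap B_r^c(x_0)|$, and then integrate it to reach the stated sharp bound.

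First I would combine Lemma \ref{lem1} with the isoperimetric inequality to estimate the isoperimetric deficit: $P(\Omega)-4\pi\leq 4\pi\eps^2 N_\eps\leq 4\pi\delta$. The Fusco--Maggi--Pratelli quantitative isoperimetric inequality then yields a point $x_0\in\R^3$ with $|\Omega\triangle B_1(x_0)|\leq C_0\sqrt{\delta}$, so in particular $v(1)\leq C_0\sqrt{\delta}$. Next, for a.e.\ $r>1$ I would build an admissible competitor from the spherical cut at $r$ around $x_0$ by rescaling the inner piece $(\Omega_{x_0,r}^+,X_{x_0,r}^+)$ uniformly by a factor $c=c(r)\geq 1$ (scaling both the set and the charge positions) to reabsorb the removed volume, and placing the $|X_{x_0,r}^-|$ ``evaporated'' charges into solitary $\eps$-balls sent off to infinity. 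The scaling is determined by $c^3|\Omega_{x_0,r}^+|+\tfrac{4\pi}{3}\eps^3|X_{x_0,r}^-|=\tfrac{4\pi}{3}$; the degenerate case $c<1$ forces $v(r)<\tfrac{4\pi}{3}\eps^3 N_\eps<CN_\eps^{3/2}\eps^3$ and the conclusion holds. In the remaining case $c\geq 1$, using $V_\eps(cX_+)=c^{-1}V_\eps(X_+)\leq V_\eps(X_+)$ and discarding the positive Coulombic cross terms, classical minimality of $(\Omega,X)$ (which agrees with the generalized minimum by Theorem \ref{exgen}) yields
\begin{equation*}
  P(\Omega)\leq c^2 P(\Omega_{x_0,r}^+)+4\pi\eps^2|X_{x_0,r}^-|.
\end{equation*}

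Combining the above with Lemma \ref{Cutting_perimeter}, the elementary bound $c^2-1\leq Cv(r)$, and $P(\Omega_{x_0,r}^+)\leq P(\Omega)+2\mathcal H^2(\Omega\cap\partial B_r(x_0))\leq 4\pi(1+\delta)+2\mathcal H^2$, after absorbing a small $v(r)\cdot\mathcal H^2$ contribution into $\mathcal H^2$ using $v(r)\leq v(1)\leq C_0\sqrt{\delta}\ll 1$, I obtain
\begin{equation*}
  P(\Omega_{x_0,r}^-)\leq C_1 v(r)+4\pi\eps^2 N_\eps-C_2 v'(r),
\end{equation*}
where $v'(r)=-\mathcal H^2(\Omega\cap\partial B_r(x_0))$ a.e.\ by the coarea formula. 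The isoperimetric inequality $P(\Omega_{x_0,r}^-)\geq(36\pi)^{1/3}v(r)^{2/3}$, combined with the absorption of $C_1 v(r)$ into the leading $v(r)^{2/3}$ term (legitimate since $v(r)^{1/3}$ is small), then produces the differential inequality
\begin{equation*}
  -C_2 v'(r)\geq c_0\, v(r)^{2/3}-4\pi\eps^2 N_\eps
\end{equation*}
for some universal $c_0>0$. Passing to $\phi(r):=v(r)^{1/3}$ converts this to $\phi'(r)\leq -c_0'$ throughout the regime $\phi(r)\geq C_\ast\eps N_\eps^{1/2}$. Starting from $\phi(1)\leq(C_0\sqrt{\delta})^{1/3}\leq C_3\delta^{1/6}$, the linear decay of $\phi$ brings it down to the threshold $C_\ast\eps N_\eps^{1/2}$ at some $r^\ast\leq 1+C'\delta^{1/6}$, at which point $v(r^\ast)=\phi(r^\ast)^3\leq C_\ast^3 N_\eps^{3/2}\eps^3$.

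The main obstacle will be the careful setup of the scaling competitor — verifying admissibility after the rescaling of the charges and their solvation balls, handling the edge case $c<1$ cleanly, and justifying the passage from the classical minimizer to the generalized competitor via Theorem \ref{exgen} — together with the estimation of $(c^2-1)P(\Omega_{x_0,r}^+)$ by $v(r)$ uniformly in $r$, which closes the loop with the initial quantitative isoperimetric bound. Once the differential inequality is in place, the linear decay of $v^{1/3}$ and the plateau at height $\sim\eps N_\eps^{1/2}$ are forced by its structure.
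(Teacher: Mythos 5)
Your proposal is correct and follows essentially the same approach as the paper: quantitative isoperimetric inequality to locate the center $x_0$ and control $v(1)$, spherical-cut-plus-rescaling competitor combined with Lemma \ref{Cutting_perimeter} and the isoperimetric inequality on the outer piece, and integration of the resulting differential inequality for $v^{1/3}$. The only difference is presentational — the paper sets up a contradiction (assuming the lower bound persists for all $r$ in the interval) whereas you run the differential inequality directly until $\phi=v^{1/3}$ crosses the threshold; both forms of the argument are the same.
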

\begin{proof}
  First note that Lemma \ref{lem1} provides an upper bound on the
  isoperimetric deficit of $\Omega$, which is given by
\begin{equation}
  \frac{P(\Omega)-4 \pi}{4 \pi} \leq N_\eps \varepsilon^2. 
\end{equation}
In turn, by the quantitative isoperimetric inequality \cite{fusco08}
this gives us an upper bound on the Fraenkel asymmetry of $\Omega$,
which tells us that there exists $x_0 \in \R^3$ and a universal
constant $C_0 >0$ such that
\begin{equation}
\label{Sharp_iso}
| \Omega \Delta B_{1}(x_0) | \leq C_0 \sqrt{N_\eps} \varepsilon  < C_0
\sqrt{\delta}. 
\end{equation}

Arguing by contradiction, assume that 
\begin{align}
  \label{eq:contraball}
  |\Omega \cap B_{r}^{c}(x_0) | \geq C N_\eps^{\frac{3}{2}}
  \varepsilon^3 
\end{align}
for all $1 \leq r \leq 1+C^\prime \delta^{\frac{1}{6}}$ and
$C, C', \delta > 0$ arbitrary, provided that
$1 < N_\eps < {\delta \over \eps^2}$.  Picking
$C^\prime= 15 C^\frac{1}{3}$, for
$1 \leq r \leq 1+15 (C \sqrt{\delta})^{\frac{1}{3}}$ we then have that
\begin{align}
  \label{eq:Omx0rN32eps}
  |\Omega_{x_0,r}^{-}| \geq C N_\eps^{\frac{3}{2}} \varepsilon^3 - {4
  \pi \over 3} N_\eps \varepsilon^3> \frac{C}{2} N_\eps^{\frac{3}{2}}
  \varepsilon^3,  
\end{align}
provided that $C$ is sufficiently large universal. To construct a
minimizing candidate we first cut $(\Omega,X)$ into
$(\Omega_{x_0,r}^{+},X_{x_0,r}^{+})$ and
$(\Omega_{x_0,r}^{-},X_{x_0,r}^{-})$ and then split off the individual
charges from $(\Omega_{x_0,r}^{-},X_{x_0,r}^{-})$ and move any
remaining mass into $\Omega_{x_0,r}^{+}$. More precisely, this
minimizing candidate is
$\left \{(c \Omega_{x_0,r}^{+}, c X_{x_0,r}^{+}), (B_{\varepsilon}(0),
  \{0\})^k \right\}$, where
\begin{equation}
  c= \sqrt[3]{\frac{|\Omega_{x_0,r}^{+}|+|\Omega_{x_0,r}^{-}|- \frac{4
        \pi}{3} k \varepsilon^3}{|\Omega_{x_0,r}^{+}|}}, 
\end{equation}
and $k = |X_{x_0,r}^{-}|$. Letting
$\delta < \delta_0< \frac{1}{C_0^2}$, from \eqref{Sharp_iso} we get
that
$|\Omega_{x_0,r}^{+}| \geq \frac{4 \pi}{3}- 1 - \frac{4 \pi}{3} N_\eps
\varepsilon^3 > 2$ for all $\eps$ sufficiently small universal, which
gives
\begin{equation}
  c \leq  \sqrt[3]{1+ \frac{|\Omega_{x_0,r}^{-}|}{2}} \leq 1+
  \frac{1}{6} |\Omega_{x_0,r}^{-}|.  
\end{equation}
Now, by cutting and re-scaling in this way we have that
\begin{multline}
  E_{\varepsilon}(c(\Omega_{x_0,r}^{+},X_{x_0,r}^{+})) +4 \pi k
  \varepsilon^2 \leq V_{\varepsilon}(X_{x_0,r}^{+})+c^2
  P(\Omega_{x_0,r}^{+}) + 4 \pi k \varepsilon^2 \\
  \leq V_{\varepsilon}(X)+P(\Omega_{x_0,r}^{+}) + |\Omega_{x_0,r}^{-}|
  P(\Omega_{x_0,r}^{+})+4 \pi k \varepsilon^2.
    \label{E_Omega+}
\end{multline}
Furthermore, from Lemma \ref{Cutting_perimeter} we have that  
\begin{equation}
  P ( \Omega_{x_0,r}^{+})   \leq P(\Omega)+4  \mathcal{H}^2 ( \Omega
  \cap \partial B_{r}(x_0))-P ( \Omega_{x_0,r}^{-}), 
\end{equation}
which together with the isoperimetric inequality applied to
$\Omega_{x_0,r}^{-}$ gives
\begin{equation}
  P ( \Omega_{x_0,r}^{+})   \leq P(\Omega)+4  \mathcal{H}^2 ( \Omega
  \cap \partial B_{r}(x_0)) -\sqrt[3]{36 \pi} |
  \Omega_{x_0,r}^{-}|^{\frac{2}{3}}. 
          \label{P_Omega+}
\end{equation}
Thus, combining \eqref{E_Omega+} and \eqref{P_Omega+} and using Lemma
\ref{lem1} we get that
\begin{multline}
  E_{\varepsilon} \left(c(\Omega_{x_0,r}^{+},X_{x_0,r}^{+}) \right) +4
  \pi k \varepsilon^2 \leq \\ E_{\varepsilon}(\Omega,X) +4
  \mathcal{H}^2 ( \Omega \cap \partial B_{r}(x_0)) -\sqrt[3]{36 \pi} |
  \Omega_{x_0,r}^{-}|^{\frac{2}{3}} +4 \pi N_\eps \varepsilon^2 +
  |\Omega_{x_0,r}^{-}| \left(P(\Omega)+4 \mathcal{H}^2 ( \Omega \cap
    \partial B_{r}(x_0)) \right) \\
  \leq E_{\varepsilon}(\Omega,X) +5 \mathcal{H}^2 ( \Omega \cap
  \partial B_{r}(x_0)) -3 | \Omega_{x_0,r}^{-}|^{\frac{2}{3}} +4 \pi
  N_\eps \varepsilon^2,
          \label{E_Omega}
\end{multline}
provided that $\delta_0$ and, hence, $| \Omega_{x_0,r}^{-}| $ is
sufficiently small universal (see \eqref{Sharp_iso}).

Finally, by \eqref{eq:Omx0rN32eps} we can pick $C$ large enough so
that
$ | \Omega_{x_0,r}^{-}|^{\frac{2}{3}} \geq 4 \pi N_\eps
\varepsilon^2$. Hence from \eqref{E_Omega} and the minimality of
$(\Omega, X)$ we obtain
\begin{equation}
  E_{\varepsilon}(\Omega,X) \leq
  E_{\varepsilon}(c(\Omega_{x_0,r}^{+},X_{x_0,r}^{+})) +4 \pi k
  \varepsilon^2 \leq E_{\varepsilon}(\Omega,X) +5 \mathcal{H}^2 (
  \Omega \cap \partial B_{r}(x_0)) -2 |
  \Omega_{x_0,r}^{-}|^{\frac{2}{3}}, 
\end{equation}
which with the help of \eqref{eq:contraball} gives us
\begin{align}
  \label{eq:H225}
  \mathcal{H}^2( \Omega \cap \partial B_{r}(x_0))
  \geq \frac{2}{5} |
  \Omega_{x_0,r}^{-}|^{\frac{2}{3}}  \geq \frac25 |\Omega \cap
  B_{r}^{c}(x_0)|^{\frac{2}{3}} - {8 \pi \over 15} N_\eps \eps^3 
  \geq \frac{1}{5} |\Omega \cap B_{r}^{c}(x_0)|^{\frac{2}{3}},
\end{align}
whenever $1 \leq r \leq 1+15 (C \sqrt{\delta})^{\frac{1}{3}}$ and $C$
large enough.  Now, letting $U(r) := |\Omega \cap B_{r}^{c}(x_0)|$ and
applying Fubini's theorem and the co-area formula, from \eqref{eq:H225}
we get that
\begin{equation}
 U^{\frac{2}{3}}(r) \leq  -5 \frac{dU(r)}{dr}
\end{equation}
for a.e. $1 \leq r \leq 1+15 (C \sqrt{\delta})^{\frac{1}{3}}$.
Integrating over this interval gives us
\begin{equation}
  0 \leq U^{\frac{1}{3}} \left( 1+15 (C \sqrt{\delta})^{\frac{1}{3}} \right) \leq
  U^{\frac{1}{3}}(1) - \left( C  \sqrt{\delta} \right)^{\frac13}. 
\end{equation}
But this is a contradiction for $C$ large enough, since from
\eqref{Sharp_iso} we know that $U(1) <C_0 \sqrt{\delta}$.
\end{proof}

Using the $L^1$ convergence from Lemma \ref{Ball} and the density
estimate from Lemma \ref{density_est}, we have that classical
minimizers converge to a ball from the outside.
\begin{lemma}
\label{Outter_Ball}
For $\delta >0$ there exists $\delta_0$ depending only on $\delta$ and
$\gamma$ such that if $1 < N_\eps < \frac{\delta_0}{\varepsilon^2}$,
and $(\Omega, X)$ is a classical minimizer then
$\Omega \subset B_{1+\delta}(\hat{x})$ for some $\hat{x} \in \R^3$.

\end{lemma}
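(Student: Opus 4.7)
The plan is to argue by contradiction, combining the $L^1$--closeness of $\Omega$ to a unit ball from Lemma~\ref{Ball} with the uniform lower density estimate from Lemma~\ref{density_est}. First I apply Lemma~\ref{Ball} with its smallness parameter taken small enough in terms of $\delta$, producing $\hat x \in \R^3$ and $r^* \in [1,\,1+C'\delta_0^{1/6}]$ with
\begin{equation*}
|\Omega \cap B_{r^*}^c(\hat x)| < C N_\eps^{3/2}\eps^3 \le C\delta_0^{3/2},
\end{equation*}
and, after further shrinking $\delta_0$ depending only on $\delta$, with $r^* \le 1+\delta/2$. I take this $\hat x$ as the desired center.

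Now assume for contradiction that there is $y^*\in\overline\Omega$ with $|y^*-\hat x|>1+\delta$. The strategy is to produce a witness point $\tilde y \in \overline\Omega \setminus \bigcup_{j=1}^{N_\eps} \overline{B_\eps(x_j)}$ with $|\tilde y - \hat x| > 1+3\delta/4$ and $\min_j|\tilde y - x_j| \ge \eps+c_0\delta$ for some universal $c_0>0$. Granted such $\tilde y$, Lemma~\ref{density_est} applied with $M=\delta_0$ at scale $r=\min(R(\delta_0),\,c_0\delta,\,\delta/8)$ yields $|\Omega \cap B_r(\tilde y)|\ge C''r^3\ge c_1\delta^3$, provided $\delta_0$ is small enough (with the dependence on $\gamma$ entering through the dependence $R=R(\delta_0)$ tracked in Lemma~\ref{density_est}) to ensure $R(\delta_0)\ge c_0\delta$. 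Since $r<\delta/8$ and $|\tilde y-\hat x|>1+3\delta/4 \ge r^* + \delta/8$, the ball $B_r(\tilde y)$ lies in $B^c_{r^*}(\hat x)$, giving
\begin{equation*}
c_1\delta^3\le|\Omega\cap B_{r^*}^c(\hat x)|<C\delta_0^{3/2},
\end{equation*}
which is a contradiction once $\delta_0<(c_1/C)^{2/3}\delta^2$.

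The main obstacle is the construction of $\tilde y$. If $y^*$ does not lie in any closed solvation ball, a small displacement of $y^*$ along $\partial\Omega$ (allowed by the $C^{1,1}$ regularity from Theorem~\ref{exgen}) provides $\tilde y$ with the desired properties, since one can move inward along $\partial \Omega$ by a controlled amount $c_0\delta$ without losing much distance to $\hat x$. The genuine difficulty is the case $y^*\in \partial B_\eps(x_{j^*})\cap\partial\Omega$, which forces $|x_{j^*}-\hat x|>1+\delta-\eps$. Here I would exploit the $C^{1,1}$ regularity of $\partial\Omega$ together with the packing bound $|x_i-x_j|\ge 2\eps$ for $i\neq j$ (so only a universal number of solvation balls can be tangent to $B_\eps(x_{j^*})$) and the connectedness of $\Omega$ (Theorem~\ref{exgen}) to push along $\partial\Omega$ away from the solvation cap into a smooth boundary region at distance at least $\eps+c_0\delta$ from every charge, while keeping $|\tilde y-\hat x|>1+3\delta/4$. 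Executing this geometric pushing argument while preserving the quantitative bounds, possibly via a covering argument using the smallness of the total solvation volume $N_\eps\eps^3\le \delta_0\eps$, is the technically delicate step.
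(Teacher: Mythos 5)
Your approach has a genuine gap at precisely the step you flag as ``technically delicate,'' and I don't think it can be repaired with the tools you propose. The existence of a point $\tilde y$ with $|\tilde y - \hat x| > 1 + 3\delta/4$ \emph{and} $\min_j |\tilde y - x_j| \geq \eps + c_0\delta$ is exactly what a pathological competitor would violate: imagine a thin tentacle of width $O(\eps)$ emanating from the main ball, lined with a chain of solvation balls $B_\eps(x_j)$ spaced at distance $\sim 2\eps$. Every point on such a tentacle is within $O(\eps)$ of some charge center, so no macroscopic charge-free region exists there, and ``pushing along $\partial\Omega$'' simply walks you past one solvation cap onto the next without ever escaping to distance $c_0\delta$ from the charge set — unless you travel all the way back into the main ball, at which point you lose the lower bound $|\tilde y - \hat x| > 1 + 3\delta/4$. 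The packing bound $|x_i - x_j| \geq 2\eps$ controls only how many balls are tangent to a \emph{single} $B_\eps(x_{j^*})$, not how long a chain can be, and the total solvation volume $N_\eps \eps^3 \lesssim \delta_0 \eps$ being small does not help either: a tentacle of length $\delta$ and width $\eps$ has volume only $\sim \eps^2\delta$, entirely consistent with that bound.

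The reason such a tentacle cannot be ruled out by Lemma~\ref{Ball} and Lemma~\ref{density_est} alone is that removing it and redistributing the charges into $k$ disjoint solvation balls changes the perimeter by $\pm O(k\eps^2)$ at leading order — a wash. What tips the balance is the Coulombic energy gain from separating the charges, and this is exactly the ingredient your argument omits. The paper's proof therefore has an intermediate step you cannot skip: it compares $E_\eps(\Omega,X)$ with a spherical cut at radius $r \in S^c$ (where the solvation balls dominate the local volume), obtaining an estimate \eqref{V_cut} in which the Coulomb interaction $\gamma\eps^3 k_{x_0}(r)(N_\eps - k_{x_0}(r))$ between the two cut pieces produces the differential inequality \eqref{eq:monolowerU}. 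Combined with the perimeter argument on $S$ (which controls $|S|$), this yields the improved bound $N_\eps \lesssim 1/(\delta^2\gamma^2)$, which in turn sharpens the Lemma~\ref{Ball} estimate to $|\Omega\cap B^c_{1+\delta/2}(\hat x)| \lesssim \eps^3/(\delta^3\gamma^3)$. Only then does a density/slicing argument at scale $\eps$ (not $\delta$) — for which a suitable base point always exists, since either the slice contains a charge trivially, or it is charge-free and Lemma~\ref{density_est} applies at scale $O(\eps)$ — produce a contradiction via the lower bound $\gtrsim \eps^2\delta$ on the same quantity. In short: your plan replaces the Coulombic cutting argument with a geometric ``push along the boundary,'' but that substitution fails because the geometric obstruction (a charge-lined tentacle) is precisely the configuration that only the Coulomb term can exclude.
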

\begin{proof}
  Without loss of generality, we may assume that $\delta$ is
  sufficiently small universal. From Lemma \ref{Ball}, the constant
  $\delta_0>0$ can be picked so that
  $|\Omega \cap B_{1+\frac{\delta}{2}}^c(\hat{x})| <C
  N_\eps^{\frac{3}{2}} \varepsilon^{3} < C \delta_0^{\frac{3}{2}}$,
  where $C>0$ is a universal constant and $\hat{x} \in \R^3$. Now let
  $L= \frac{1}{6} \left( \sup_{x \in \Omega } |x-\hat{x}|-1 -
    \frac{\delta}{2} \right)$, and, arguing by contradiction, assume
  that $L>\frac{\delta}{12} $.

  For $r > 0$ and $y \in \R^3$, define $k_{y}(r)$ to be the number of
  charges inside of $\Omega_{y,r}^+$. We claim that there exists
  $x_0 \in \partial \Omega$ such that
  $B_{L}(x_0) \cap B_{1+\frac{\delta}{2}}(\hat{x})= \varnothing$ and
  $k_{x_0}(L) \leq \frac{N_\eps}{3}$. Indeed, let
    $x_4^\star \in \partial \Omega$ satisfy
    $|x_4^\star- \hat{x}| = \sup_{x \in \Omega} |x-\hat{x} |=1 + 6L +
    \frac{\delta}{2}$ (from the definition of $L$). Now pick
    $x_0^\star \in \partial B_{1 + \frac{\delta}{2}}(\hat{x})$ to
    satisfy \begin{equation} |x_0^\star- x_4^\star| = \inf_{x \in B_{1
          + \frac{\delta}{2}}(\hat{x})} |x-x_4^\star |= 6 L,
    \end{equation} 
    and for $x \in \mathbb R^3$ define a family of parallel planes
    $P(x)$ passing through $x$ orthogonally to $x_4^\star-x_0^\star$.
    Finally, define the three points
    \begin{equation}
      x_i^\star := x_0^\star + \frac{x_4^\star-x_0^\star}{6} + (i-1) 
      \frac{x_4^\star-x_0^\star}{3},
    \end{equation}
    for $i=1,2,3$. Note that for $i,j=1,2,3$ with $i \neq j$ we have
    that
    \begin{equation}
      \label{L_dist}
      \mathrm{dist}(P(x_{i}^\star),P( x_j^\star)) \geq \left |
        \frac{x_4^\star-x_0^\star}{3}\right| = 2L, 
    \end{equation}
    and
    \begin{equation}
      \label{LL_dist}
      \mathrm{dist}(P(x_{i}^\star),B_{1 + \frac{\delta}{2}}(\hat{x})) \geq
      \mathrm{dist}(x_{i}^\star,B_{1 +  \frac{\delta}{2}} (\hat{x})) \geq
      \left |   \frac{x_4^\star-x_0^\star}{6}\right| = L.
    \end{equation}
    Since $\Omega$ is bounded and connected, there exist
    $\hat{x}_i \in P(x_i^\star) \cap \partial \Omega$ for
    $i=1,2,3$. Thus, from \eqref{L_dist} we have that
    $B_{L}(\hat{x}_i) $ are pairwise disjoint for $i=1,2,3$, which
    implies that $\Omega_{\hat{x}_i ,L}^{+}$ are pairwise disjoint. It
    follows that
    $k_{L}(\hat{x}_{i^\star}) \leq \frac{N_{\varepsilon}}{3}$ for some
    $i^\star=1,2,3$. Lastly, with $x_0=\hat{x}_{i^\star}$, and taking
    into account that
    $ B_{L}(x_0) \cap B_{1 + \frac{\delta}{2}}(\hat{x}) = \varnothing$
    by \eqref{LL_dist} , the claim is proved.

  Now, define $U(r) := |\Omega \cap B_{r}(x_0) |$. Then since $U(r)$
  is a continuous monotone increasing function and $k_{x_0}(r)$ is a
  lower semi-continuous piecewise-constant function with a finite
  number of jumps, we have that
  $S := \left \{ r \in [0, L]: U(r) \geq (4 \pi k_{x_0}(r)
    \varepsilon^2)^{\frac{3}{2}} \right \}=[a_1,b_1] \cup [a_2,b_2]
  \cup \ldots \cup [a_{q},b_{q}]$.  By cutting and rescaling we will
  show that the set $S$ is small whenever $\delta$ is small. Let
  $r \in S$. Create a minimizing candidate
  $ \left \{ \left(c \Omega_{x_0,r}^-, c X_{x_0,r}^- \right),
    (B_{\varepsilon}^{1}(0), \{0\})^{k_{x_0} (r)} \right \}$, where
\begin{equation}
  c= \sqrt[3]{ \frac{ \frac{4 \pi}{3}-\frac{4 \pi}{3} k_{x_0}(r)
      \varepsilon^3}{\frac{4 \pi}{3}- |\Omega_{x_0,r}^+ |}} \leq
  \sqrt[3]{ \frac{ \frac{4 \pi}{3}}{\frac{4 \pi}{3}- |\Omega_{x_0,r}^+
      |}} \leq
  1+ \frac{3}{2 \pi} |\Omega_{x_0,r}^+ |, 
    \label{c_b}
\end{equation}
where from Lemma \ref{Ball}, $\delta_0$ is chosen so that
$|\Omega_{x_0,r}^+ | <C \delta_0^\frac{3}{2}$ is small universal.
Then by the minimality of $(\Omega, X)$ we have
\begin{multline}
  P(\Omega \cap B_{r}(x_0)) + P(\Omega \cap B_{r}^c(x_0))- 2
  \mathcal{H}^2( \partial B_{r}(x_0) \cap \Omega)+ V_{\varepsilon}(X)
  \leq P(\Omega)+V_{\varepsilon}(X)= E_{\varepsilon}(\Omega,X)
  \\
  \leq c^2 P(\Omega_{x_0,r}^-) + 4 \pi k_{x_0}(r) \varepsilon^2
  +V_{\varepsilon}(X).
        \label{1}
\end{multline}
Furthermore, from the argument in the proof of Lemma
\ref{Cutting_perimeter} we have that
$P(\Omega_{x_0,r}^-) < P(\Omega \cap B_{r}^c(x_0)) +2 \mathcal{H}^2(
\Omega \cap \partial B_{r}(x_0))$, and from \eqref{1} and \eqref{c_b}
we get that
\begin{multline}
  P(\Omega \cap B_{r}(x_0)) + P(\Omega \cap B_{r}^c(x_0))- 2
  \mathcal{H}^2( \Omega \cap \partial B_{r}(x_0)) \\ \leq \left (1+
    \frac{3}{2 \pi} |\Omega_{x_0,r}^+ | \right)^2 \left(P(\Omega \cap
    B_{r}^c(x_0)) +2 \mathcal{H}^2( \Omega \cap \partial B_{r}(x_0))
  \right) + 4 \pi k_{x_0}(r) \varepsilon^2
  \\
  \leq \left(1+2 |\Omega_{x_0,r}^+ | \right) \left (P(\Omega \cap
    B_{r}^c(x_0)) +2 \mathcal{H}^2( \Omega \cap \partial B_{r}(x_0))
  \right) + 4 \pi k_{x_0}(r) \varepsilon^2.
\end{multline}
Since
$|\Omega_{x_0,r}^+ | \leq U(r)+ \frac{4 \pi}{3} k_{x_0}(r)
\varepsilon^3$, we get that
\begin{multline}
  P(\Omega \cap B_{r}(x_0)) + P(\Omega \cap B_{r}^c(x_0))- 2
  \mathcal{H}^2( \Omega \cap \partial B_{r}(x_0))
  \\
  \leq \left (1+2U(r)+ \frac{8 \pi}{3} k_{x_0}(r) \varepsilon^3
  \right) \left(P(\Omega \cap B_{r}^c(x_0)) +2 \mathcal{H}^2( \Omega
    \cap \partial B_{r}(x_0)) \right) + 4 \pi k_{x_0}(r) \varepsilon^2
  ,
\end{multline}
and using the assumption that $r \in S$ gives
\begin{multline}
  \label{eq:isopkeps0}
  P(\Omega \cap B_{r}(x_0)) + P(\Omega \cap B_{r}^c(x_0))- 2
  \mathcal{H}^2( \Omega \cap \partial B_{r}(x_0))
  \\
  \leq (1+3 U(r)) \left(P(\Omega \cap B_{r}^c(x_0)) +2 \mathcal{H}^2(
     \Omega \cap \partial B_{r}(x_0)) \right) + 4 \pi k_{x_0}(r)
  \varepsilon^2 \\ \leq P(\Omega \cap B_{r}^c(x_0)) +2 \mathcal{H}^2(
  \partial B_{r}(x_0) \cap \Omega) + 3 U(r) P(\Omega) + 9 U(r)
  \mathcal{H}^2(  \Omega \cap \partial B_{r}(x_0))+ 4 \pi k_{x_0}(r)
  \varepsilon^2
  \\
  \leq P(\Omega \cap B_{r}^c(x_0)) + 3 \mathcal{H}^2( \Omega \cap
  \partial B_{r}(x_0)) +15 \pi U(r)+ 4 \pi k_{x_0}(r) \varepsilon^2 ,
\end{multline}
after possibly decreasing the value of $\delta_0$.

We now apply the isoperimetric inequality to $\Omega \cap B_{r}(x_0)$
in \eqref{eq:isopkeps0} to obtain
\begin{equation}
  \sqrt[3]{36 \pi} U^{\frac{2}{3}}(r)- 15 \pi U(r) -4 \pi k_{x_0}(r)
  \varepsilon^2 \leq 5 \mathcal{H}^2(  \Omega \cap \partial B_{r}(x_0)). 
\end{equation}
Since $U(r) < C \delta_0^{\frac{3}{2}}$, we can pick $\delta_0$ to give   
\begin{equation}
  \frac{\sqrt[3]{36 \pi}}{2} U^{\frac{2}{3}} (r) -4 \pi k_{x_0}(r)
  \varepsilon^2 \leq 5 \mathcal{H}^2(  \Omega \cap \partial B_{r}(x_0)). 
\end{equation}
Finally, since $r \in S$ we have that
$ \frac{\sqrt[3]{36 \pi}}{4} U^{\frac{2}{3}}(r) \geq
U^{\frac{2}{3}}(r) \geq 4 \pi k_{x_0}(r) \varepsilon^2$. Thus
\begin{equation}
  U^{\frac{2}{3}}(r) \leq 5 \mathcal{H}^2( \Omega \cap \partial
  B_{r}(x_0)) \qquad \forall r \in S. 
\end{equation}
Noting that
$d U(r) / dr = \mathcal{H}^2( \Omega \cap \partial B_{r}(x_0))$ for
a.e. $r$ and integrating this expression for $r \in [a_i,b_i]$ then
gives us that
\begin{equation}
  U^{\frac{1}{3}}(b_i)- U^{\frac{1}{3}}(a_i) \geq \frac{1}{15}(b_i-a_i).
\end{equation}
However, since by monotonicity of $U(r)$ we have $U(a_i) \geq
U(b_{i-1})$, it holds that
\begin{equation}
    U^{\frac{1}{3}}(b_q) \geq \frac{1}{15} \sum_{i=1}^{q} (b_i-a_i).
\end{equation}
At the same time, since we also have
$U^{\frac{1}{3}}(b_q) < C \sqrt{\delta_0}$ for some $C > 0$ universal,
we obtain that
\begin{equation}
  \sum_{i=1}^{q} (b_i-a_i) \leq 15 C \sqrt{\delta_0} \leq
  \frac{\delta}{24}< \frac{L}{2}, 
     \label{S_bbd}
\end{equation}
whenever $\delta_0 \leq \left( \frac{\delta}{360 C} \right)^2$, where
$C>0$ is a universal constant. In particular, the set $S$ has a small
measure controlled by $\delta$, as claimed. 

Now let $r<L$ and $r \in S^c$. Then from Lemma \ref{Cutting_perimeter}
and a comparison of the energy of $(\Omega, X)$ with that of
$\{ (\Omega_{x_0, r}^+, X_{x_0, r}^+), (\Omega_{x_0, r}^-, X_{x_0,
  r}^-)\}$ we get
\begin{multline}
  P ( \Omega_{x_0,r}^{+}) + P ( \Omega_{x_0,r}^{-}) -4 \mathcal{H}^2 (
  \Omega \cap \partial B_{r}(x_0))+V_{\varepsilon}(X) \leq
  P(\Omega)+V_{\varepsilon}(X)=E_{\varepsilon}(\Omega,X)  \\
  \leq P ( \Omega_{x_0,r}^{+}) + P ( \Omega_{x_0,r}^{-})
  +V_{\varepsilon}(X)- \frac{\gamma \varepsilon^3}{ (3+ 12 L)}
  k_{x_0}(r) (N_\eps-k_{x_0}(r)),
           \label{V_cut}
\end{multline}
since from the definition of $L$ the diameter of $\Omega$ is less or
equal than $2+ \delta+ 12 L < 3+12L $. Thus, \eqref{V_cut} implies
that
\begin{equation}
  \frac{\gamma \varepsilon^3}{(3+ 12 L)} k_{x_0}(r)
  (N_\eps-k_{x_0}(r)) \leq 4  \mathcal{H}^2 ( \Omega \cap \partial
  B_{r}(x_0)). 
\end{equation}
However, we chose $x_0$ and $L$ so that
$k_{x_0}(L) \leq \frac{N_\eps}{3}$, which gives us that
\begin{equation}
  \frac{ \gamma \varepsilon^3 N_\eps}{1+L} k_{x_0}(r)  \leq C^\prime
  \mathcal{H}^2 ( \Omega \cap \partial B_{r}(x_0)). 
\end{equation}
for some new universal constant $C^\prime>0$ that will change from
line to line in the remainder of the proof.  Furthermore, since
$r \in S^c$,
$U(r) < \left(4 \pi k_{x_0}(r) \varepsilon^2 \right)^{\frac{3}{2}}$,
which implies that
$ \frac{U^{\frac{2}{3}}(r)}{4 \pi \varepsilon^2} \leq
k_{x_0}(r)$. Thus
\begin{equation}
  \frac{\gamma \varepsilon N_\eps U^{\frac{2}{3}}(r)}{1+L  } \leq
  C^\prime \mathcal{H}^2 ( \Omega \cap \partial B_{r}(x_0)). 
\end{equation}
Integrating this expression from $[b_i,a_{i+1}]$, with $a_{q+1}:=L$,
gives us that
\begin{equation}
  C^\prime \left( U^{\frac{1}{3}}(a_{i+1})-U^{\frac{1}{3}}(b_{i})
  \right) \geq  \frac{\gamma \varepsilon N_\eps}{1+L }(a_{i+1}-b_i), 
\end{equation}
which again by monotonicity of $U(r)$ implies that
\begin{equation}
  \label{eq:monolowerU}
  U^{\frac{1}{3}}(L) \geq  \frac{\gamma \varepsilon N_\eps}{C^\prime
    (1+L)  } \sum_{i=1}^{q} (a_{i+1}-b_i).
\end{equation}
From \eqref{S_bbd}, we have that
$\sum_{i=1}^{q} (a_{i+1}-b_i) \geq \frac{L}{2}$, which gives that
\begin{equation}
  U^{\frac{1}{3}}(L) \geq \frac{\gamma \varepsilon N_\eps L}{C^\prime
    (1+L)  }. 
\end{equation}
However, from Lemma \ref{Ball} we have that
$U(L) <C N_\eps^{\frac{3}{2}} \varepsilon^{3}$, which implies that
\begin{equation}
  N_\eps \leq C' \left  (\frac{1+L}{\gamma L} \right)^2.
\end{equation}
Since $L> \frac{\delta}{12}$, this gives that
$N_\eps \leq \frac{C'}{\delta^2 \gamma^2}$.  Then by Lemma \ref{Ball}
we have
\begin{equation}
  |\Omega \cap B_{1+\frac{\delta}{2}}^c(\hat{x})| \leq  \frac{C' 
    \varepsilon^{3}}{\delta^3 \gamma^3} 
\end{equation}

Finally, to arrive at a contradiction observe that the set
$\Omega \cap B_r^c(\hat x)$ must have a connected component whose
diameter exceeds $\frac12 \delta$. A slicing argument at scale $\eps$
then gives
\begin{equation}
  |\Omega \cap B_{1+\frac{\delta}{2}}^c(\hat{x})| \geq C
  \varepsilon^2 \delta ,
\end{equation}
for some universal constant $C > 0$. Indeed, if a slice contains a
charge then it trivially contains a volume of at least of order
$\eps^3$. If, however, the slice does not contain any charges, then it
still contains at least that much volume by Lemma
\ref{density_est}. Together, the above two inequalities give a
contradiction for $ \varepsilon < C \delta^4 \gamma^3$, with $C > 0$
universal.
\end{proof}

\subsection{Existence results}
\label{sec:Existence-Results}

We now proceed to proving our existence and non-existence results for
$\eps \ll 1$. We begin by adapting the arguments in the proof of Lemma
\ref{Outter_Ball} to show that for not too large values of $N_\eps$
the diameter of all but the first component of a generalized minimizer
must be small.

\begin{lemma}
\label{Min_Component} 
There exist universal constants $C,\delta >0 $ such that for
$1 < N_\eps < \frac{\delta}{\varepsilon^2}$, if
$\{ (\Omega_1,X_1),... (\Omega_k,X_k) \}$ is a generalized minimizer
and $i>1$, then
$\mathrm{diam} (\Omega_i) \leq C \max \left( r_i, {\eps \over
    \gamma^3} \right)$, where
$r_i := \left( {3 \over 4 \pi} |\Omega_i| \right)^\frac13$.
\end{lemma}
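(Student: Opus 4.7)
Proof plan. The proof adapts the strategy of Lemma \ref{Outter_Ball} to each small component $\Omega_i$ of a generalized minimizer. Arguing by contradiction, suppose that for some $i > 1$ we have $D := \mathrm{diam}(\Omega_i) > C_0 \max(r_i, \eps/\gamma^3)$ for a large universal constant $C_0$ to be determined. Choose $x_0, y_0 \in \overline{\Omega_i}$ with $|x_0 - y_0| = D$, translate so that $x_0 = 0$, and, swapping $x_0$ and $y_0$ if necessary, assume $|X_i \cap B_{D/2}(x_0)| \leq N_i/2$. For $r \in [0, L]$ with $L := D/3$, set $U(r) := |\Omega_i \cap B_r(x_0)|$, $k(r) := |X_{x_0, r}^+|$, and partition $[0, L] = S \cup S^c$, with $S := \{r \in [0,L] : U(r) \geq (4\pi k(r) \eps^2)^{3/2}\}$.

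On the volume-dense regime $S$, we test against the competitor obtained by excising $\Omega_{x_0, r}^+$ from $\Omega_i$, rescaling the remaining components of the generalized minimizer so as to recover the total volume, and sending the extracted $k(r)$ charges to infinity as isolated $\eps$-balls. Combining the isoperimetric inequality for $\Omega_{x_0, r}^+$ and the cutting estimate of Lemma \ref{Cutting_perimeter} with minimality yields, exactly as on $S$ in the proof of Lemma \ref{Outter_Ball}, the differential inequality $U^{2/3}(r) \leq C_1 \mathcal{H}^2(\Omega_i \cap \partial B_r(x_0))$. The co-area identity $dU/dr = \mathcal{H}^2(\Omega_i \cap \partial B_r(x_0))$ then gives $|S| \leq C_2 U^{1/3}(L) \leq C_3 r_i$. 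Moreover, since $k$ vanishes for $r$ less than the distance $\rho_0$ from $x_0$ to the nearest charge of $X_i$, the inclusion $[0, \rho_0] \subset S$ produces $\rho_0 \leq C_3 r_i$.

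On the charge-dense regime $S^c$, we test instead against the generalized competitor in which $(\Omega_{x_0, r}^\pm, X_{x_0, r}^\pm)$ are treated as two separate components. Using Lemma \ref{Cutting_perimeter} and $|x - y| \leq D$ for any two charges of $X_i$, minimality gives $\gamma \eps^3 k(r)(N_i - k(r))/D \leq 4 \mathcal{H}^2(\Omega_i \cap \partial B_r(x_0))$. Our choice of $x_0$ and the monotonicity of $k$ guarantee $1 \leq k(r) \leq N_i/2$ on $[\rho_0, L]$, so $k(r)(N_i - k(r)) \geq N_i/2$ there. Integrating this inequality over $S^c \cap [\rho_0, L]$ and invoking $U(L) \leq |\Omega_i| \leq C N_i^{3/2} \eps^3$ from Lemma \ref{Lem_New}, we arrive at $|S^c \cap [\rho_0, L]| \leq C_4 D N_i^{1/2}/\gamma$.

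Combining the two estimates yields $D/3 = L \leq C_3 r_i + C_4 D N_i^{1/2}/\gamma$. Taking the universal constant $\delta$ in the hypothesis $N_\eps < \delta/\eps^2$ sufficiently small, the last term can be absorbed on the left, giving $D \leq C(r_i + \eps)$ in the physically relevant regime. The full bound $D \leq C \max(r_i, \eps/\gamma^3)$ is then recovered by a concluding slicing argument in the spirit of the end of the proof of Lemma \ref{Outter_Ball}: any excess diameter beyond the claimed bound would force, via the uniform density estimate of Lemma \ref{density_est} applied to slices along the diameter axis, a lower bound on $|\Omega_i|$ incompatible with the volume bound of Lemma \ref{Lem_New} once $C_0$ is chosen large enough. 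The main technical challenge is tracking the constants carefully so as to capture the precise $\eps/\gamma^3$ scale appearing in the maximum; this is where the interplay between the Coulombic and perimeter gains becomes delicate.
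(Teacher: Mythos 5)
Your overall framework --- contradiction hypothesis, choice of $x_0$ with at most half the charges of $X_i$ in $B_{D/2}(x_0)$, decomposition of $[0,L]$ into $S$ and $S^c$, and the cutting estimates --- faithfully mirrors Lemma \ref{Outter_Ball}, which is indeed the route the paper takes, and the bound $|S|\le C_3 r_i$ on the volume-dense set is right. The gap is in your treatment of $S^c$: there you only use $k(r)(N_i-k(r))\ge N_i/2$, which yields the linear inequality $\gamma\eps^3 N_i/(2D)\lesssim U'(r)$ on $S^c$ and, after integration and Lemma \ref{Lem_New}, the bound $|S^c|\lesssim D\,N_i^{1/2}/\gamma$. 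Combining with $|S|\le C_3 r_i$ gives $D/3\le C_3 r_i+ C_4 D N_i^{1/2}/\gamma$, and you assert the last term can be absorbed by ``taking $\delta$ sufficiently small'' in $N_\eps<\delta/\eps^2$. This does not follow: all one can say is $N_i^{1/2}/\gamma<\sqrt{\delta}/(\gamma\eps)$, which is not small when $\gamma\eps$ is small, and no universal choice of $\delta$ enforces $N_i\ll\gamma^2$. When $N_i\gtrsim\gamma^2$ the argument stalls, and the slicing step you appeal to at the end --- which only yields $D\lesssim|\Omega_i|/\eps^2\lesssim N_i^{3/2}\eps$ --- does not recover $D\lesssim\eps/\gamma^3$ without first knowing $N_i\lesssim\gamma^{-2}$. (The interim conclusion $D\le C(r_i+\eps)$ you state is also strictly weaker than the claimed $C\max(r_i,\eps/\gamma^3)$ once $\gamma>1$, which signals the same difficulty.)

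The missing ingredient is the defining inequality of $S^c$, namely $k(r)>U^{2/3}(r)/(4\pi\eps^2)$, which you set up but then discard in favor of $k(r)\ge 1$. Feeding this stronger lower bound into the cutting estimate, exactly as in Lemma \ref{Outter_Ball}, gives the differential inequality $\gamma\eps|X_i|\,U^{2/3}(r)/\mathrm{diam}(\Omega_i)\lesssim U'(r)$ on $S^c$, whose integration yields $U^{1/3}(L)\gtrsim\big(\gamma\eps|X_i|/\mathrm{diam}(\Omega_i)\big)\,|S^c|$. Since the contradiction hypothesis forces $|S|\ll L$ and hence $|S^c|\gtrsim L\sim\mathrm{diam}(\Omega_i)$, the diameters cancel and one obtains $r_i\gtrsim U^{1/3}(L)\gtrsim\gamma\eps|X_i|$; combined with $r_i\lesssim|X_i|^{1/2}\eps$ from Lemma \ref{Lem_New}, this forces $|X_i|\lesssim\gamma^{-2}$ and hence $|\Omega_i|\lesssim\eps^3/\gamma^3$. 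Only then does slicing at scale $\eps$, via Lemma \ref{density_est}, produce $\mathrm{diam}(\Omega_i)\lesssim|\Omega_i|/\eps^2\lesssim\eps/\gamma^3$, which contradicts $\mathrm{diam}(\Omega_i)>C\,\eps/\gamma^3$ for $C$ large. Your proof needs to be repaired by restoring this use of the $S^c$-defining inequality before the absorption and slicing steps can be made to work.
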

\begin{proof}
  Let $i>1$. First note that Lemma \ref{Lem_New} provides an upper
  bound on $| \Omega_i|$, which implies smallness of $| \Omega_i|$ for
  $\delta$ sufficiently small universal. If
  $L_i := \mathrm{diam} (\Omega_i)$, then arguing by contradiction we
  may assume that
  $L_i > C \max \left( r_i, {\eps \over \gamma^3} \right)$, where
  $C>0$ is arbitrary.  Arguing exactly as in the proof of Lemma
  \ref{Outter_Ball} for the components $\Omega_i$, we may then obtain
  the estimate
  \begin{equation}
    L_i < \frac{C^\prime \varepsilon}{\gamma^3},
  \end{equation}
  for some $C' > 0$ universal: we can first find a point
  $x_0 \in \partial \Omega_i$ such that $(\Omega_i)_{x_0,r}^+$
  contains only a universally small fraction of the total number of
  charges $|X_i|$, then by cutting and rescaling we get an analog of
  the estimate in \eqref{S_bbd} in which the right-hand side is
  instead bounded by a universal multiple of $|\Omega_i|^\frac13$, and
  finally by cutting and separating the pieces we get an analog of the
  estimate in \eqref{eq:monolowerU} with $C' \gamma \eps |X_i| / L_i$
  multiplying the sum in the right-hand side instead. This provides a
  contradiction by choosing $C > C'$.
\end{proof}

Our next lemma gives a precise characterization of the generalized
minimizers when $\gamma$ is sufficiently large universal.

\begin{lemma}
\label{Min_class}
There exist universal constants $\delta, \gamma_0>0 $ such that for
$1 < N_\eps < \frac{\delta}{\varepsilon^2}$ and $\gamma> \gamma_0$, if
$\{ (\Omega_1,X_1),... (\Omega_k,X_k) \}$ is a generalized minimizer
then up to translations it has the form
$\{ (\Omega_1,X_1), (B_{\varepsilon}(0), \{0\})^{k-1} \}$.
\end{lemma}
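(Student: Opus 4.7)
The plan is to argue by contradiction. Suppose some component $(\Omega_i, X_i)$ with $i \geq 2$ contains $n_i := |X_i| \geq 2$ charges. I will ``evaporate'' every charge of $\Omega_i$ into its own solvation ball and show that this strictly decreases the sum of component energies, contradicting the characterization of the generalized minimizer as achieving $\inf_{\mathcal A_\eps} E_\eps$. Combined with Lemma \ref{2} for the case $n_i=1$, this will yield the claimed form. Concretely, I replace $(\Omega_i, X_i)$ by $n_i$ copies of $(B_\eps(0),\{0\})$ and absorb the excess volume $v := |\Omega_i| - \tfrac{4\pi n_i}{3}\eps^3 \geq 0$ into a dilation of $(\Omega_1, X_1)$ by the factor $c := ((|\Omega_1|+v)/|\Omega_1|)^{1/3} \geq 1$. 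Arranging the new pieces at mutual distance $L\to\infty$ produces a configuration in $\mathcal A_\eps$ whose energy converges to the new sum, so a negative change $\Delta E$ in that sum contradicts minimality.

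Since $V_\eps(cX_1) = V_\eps(X_1)/c \leq V_\eps(X_1)$ for $c\geq 1$, the change in the sum is bounded by
\begin{align*}
\Delta E \leq (c^2-1) P(\Omega_1) + 4\pi n_i \eps^2 - P(\Omega_i) - V_\eps(X_i).
\end{align*}
Using $(1+x)^{2/3} \leq 1 + \tfrac{2}{3}x$, together with $|\Omega_1| \geq \pi$ from Lemma \ref{Lemma_2} and $P(\Omega_1) \leq 5\pi$ from Lemma \ref{lem1} (both valid for $\delta$ universally small), I get $(c^2-1)P(\Omega_1) \leq C_1 v$. The decisive estimate is a lower bound on $V_\eps(X_i)$: by Lemma \ref{Min_Component} combined with Lemma \ref{Lem_New}, $\mathrm{diam}(\Omega_i) \leq C_2 n_i^{1/2}\eps$, provided $\gamma_0$ is large enough that $\eps/\gamma^3 \leq n_i^{1/2}\eps$ whenever $n_i\geq 2$. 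Since all $n_i$ charges of $X_i$ lie in $\Omega_i$,
\begin{align*}
V_\eps(X_i) \geq \gamma \eps^3 \binom{n_i}{2} \big/ \mathrm{diam}(\Omega_i) \geq c_3\, \gamma\, n_i^{3/2} \eps^2.
\end{align*}

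Combining with $v \leq |\Omega_i| \leq C_4 n_i^{3/2}\eps^3$ from Lemma \ref{Lem_New} and discarding $-P(\Omega_i) \leq 0$, I obtain
\begin{align*}
\Delta E \leq C_5 n_i^{3/2}\eps^3 + n_i \eps^2 \bigl(4\pi - c_3 \gamma n_i^{1/2}\bigr).
\end{align*}
For $n_i\geq 2$ the bracket is at most $4\pi - c_3\gamma\sqrt{2}$, which is $\leq -4\pi$ once $\gamma_0$ is chosen so that $c_3\gamma_0\sqrt{2} > 8\pi$. The remainder satisfies $C_5 n_i^{3/2}\eps^3 \leq C_5 \sqrt{N_\eps \eps^2}\, n_i\eps^2 \leq C_5\sqrt{\delta}\, n_i\eps^2$ by the hypothesis $N_\eps \leq \delta/\eps^2$, so it is dominated by $4\pi n_i\eps^2$ once $\delta$ is a small universal constant. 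Hence $\Delta E < 0$, the desired contradiction, so $|X_i|=1$ for every $i\geq 2$, and Lemma \ref{2} then forces $\Omega_i$ to be a translate of $B_\eps(0)$.

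The main obstacle is securing the tight diameter bound $\mathrm{diam}(\Omega_i) \lesssim n_i^{1/2}\eps$; without it the Coulombic lower bound $V_\eps(X_i) \gtrsim \gamma n_i^{3/2}\eps^2$ would be too weak to beat the $4\pi n_i\eps^2$ penalty of creating $n_i$ new solvation balls. This is where the large-$\gamma$ hypothesis enters: the contraction estimate in Lemma \ref{Min_Component} requires the Coulombic repulsion to dominate the perimeter preference for a round shape, and this requires $\gamma$ to exceed a universal threshold.
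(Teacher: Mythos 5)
Your proof is correct and takes essentially the same approach as the paper's: use Lemma \ref{Min_Component} (available once $\gamma > \gamma_0$) to bound $\mathrm{diam}(\Omega_i) \lesssim \sqrt{|X_i|}\,\eps$, then lower-bound $V_\eps(X_i)$ via this diameter and compare against an evaporated competitor to force $\gamma \lesssim 1$, a contradiction. The only cosmetic difference is that the paper evaporates a single charge from $\Omega_i$ (so total volume is preserved automatically and no rescaling is needed), whereas you evaporate all $|X_i|$ charges and absorb the excess volume by dilating $\Omega_1$; both give the same bound.
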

\begin{proof}
  Without loss of generality, assume that $k>1$ and consider
  $(\Omega_i,X_i)$ for $i>1$. Arguing by contradiction, assume that
  $|X_i|>1$ (the case of $| X_i| \leq 1 $ is taken care by Lemma
  \ref{2}). First note that from the definition of generalized
  minimizers and Lemma \ref{Lem_New} we have that
  $c \eps^3 < | \Omega_i|< C |X_i|^{\frac{3}{2}} \varepsilon^3 $ for
  some universal $c, C>0$. Therefore, from Lemma \ref{Min_Component}
  there exists $\gamma_0>0$ such that if $\gamma> \gamma_0$ then
  $ \mathrm{diam} (\Omega_i) \leq C |\Omega_i|^\frac13$ for some
  universal $C > 0$. Thus
  \begin{equation}
    \mathrm{diam} (\Omega_i) \leq C \varepsilon \sqrt{|X_i|}.
  \end{equation}
   
  Now construct a minimizing candidate by cutting one charge at
  $x_0 \in X_i$ together with the ball $B_\eps(x_0)$ from $\Omega_i$
  and adding a new component $(B_\eps(0), \{0\})$, i.e., consider a
  competitor
  $\left \{ (\Omega_1,X_1), \ldots, (\Omega_i \setminus
    B_{\varepsilon}(x_0), X_i \setminus x_0), \ldots, (\Omega_k,X_k),
    (B_{\varepsilon}(0), \{0\}) \right\}$. Comparing the energies then
  yields
\begin{multline}
  8 \pi \varepsilon^2 - \frac{\varepsilon^2 \gamma (|X_i|-1)}{C
    \sqrt{|X_i|} } \geq 8 \pi \varepsilon^2 - \frac{\varepsilon^3
    \gamma (|X_i|-1)}{\mathrm{diam} (\Omega_i)}
  \\
  \geq E_\eps(\Omega_i \backslash B_\eps(x_0), X_i \backslash x_0) +
  E_\eps(B_\eps(0), \{0\}) - E_\eps(\Omega_i, X_i) \geq 0.
\end{multline}
Since $|X_i|\geq 2$, this gives $\gamma \leq 16 \pi C$, contradicting
the assumption on $\gamma$. Thus, for $i>1$ we have that $|X_i|=1$.
\end{proof}

\begin{proof}[Proof of Theorem \ref{t:exist}]
  First note that from Lemma \ref{Min_class}, without loss of
  generality we may assume that our generalized minimizer takes the
  form $\{ (\Omega_1,X_1), (B_{\varepsilon}(0), \{0\})^{k-1}
  \}$. Arguing by contradiction, assume that $k > 1$. We will
  construct a competitor by bringing one of the isolated charges into
  $(\Omega_1,X_1)$ and reducing the total energy. Since
  $(\Omega_1,X_1)$ is a classical minimizer of $E_\eps$ in the
  admissible class $\mathcal{A}_{|\Omega_1|,|X_1|,\eps}$, by
  considering a ball with $|X_1|$ approximately hexagonally packed
  charges, from \cite[Theorem C]{Wagner92} we obtain
\begin{equation}
  E_{\varepsilon}( \Omega_1,X_1) \leq \sqrt[3]{36 \pi}
  |\Omega_1|^{\frac{2}{3}} + \frac{\gamma \varepsilon^3 }{2
    \left(\sqrt[3]{\frac{3}{4 \pi} |\Omega_1|}- \varepsilon
    \right)}|X_1|^2. 
\end{equation}
Note that in view of Lemma \ref{Lemma_2} the distance between the
charges in this construction is at least of order
$\frac{1}{\sqrt{N_\eps}}> \sqrt{\gamma \varepsilon} \gg \varepsilon$,
for $N_\eps< \frac{1}{\gamma \varepsilon}$, $\gamma > 1$ and $\eps$
small enough universal.  Thus, letting
$X_1=\{x_1,x_2, \ldots, x_{|X_1|}\}$, by isoperimetric inequality we
get that
\begin{equation}
  V_{\varepsilon}(X_1) = \frac{\gamma \varepsilon^3}{2}
  \sum_{i=1}^{|X_1|} \sum_{j \neq 
    i}\frac{1}{|x_i-x_j|} \leq  \frac{\gamma 
    \varepsilon^3 }{2 \left(\sqrt[3]{\frac{3}{4 \pi} |\Omega_1|}-
      \varepsilon \right)}|X_1|^2 \leq \gamma \varepsilon^3 |X_1|^2, 
\end{equation}
in view of
\begin{align}
  \label{eq:Om1vol}
  \sqrt[3]{\frac{3}{4 \pi} |\Omega_1|}- \varepsilon \geq \frac{1}{2},
\end{align}
for all $\eps$ sufficiently small universal. This implies that there
exists $x_{i^\ast} \in X_1$ such that
\begin{equation}
\label{V_bdd}
     \sum_{j \neq i^{\ast}} \frac{1}{|x_{i^{\ast}}-x_j|} \leq 2 |X_1|.
   \end{equation}
   
   Consider
   $d :=\frac{1}{2} \min\limits_{j \neq i^{\ast}}
   |x_{i^{\ast}}-x_j|$. First note that arguing as in
   \eqref{eq:evapor} we have that
\begin{equation}
\label{Charge_sep}
\frac{ \gamma \varepsilon^3 }{2 d} \leq \gamma \varepsilon^3  \sum_{j
  \neq i^{\ast}} \frac{1}{|x_{i^{\ast}}-x_j|} \leq 8 \pi
\varepsilon^2, 
\end{equation}
which implies that 
\begin{equation}
\label{ch_dis}
    d \geq \frac{\gamma \varepsilon}{16 \pi } \geq 4 \varepsilon ,
\end{equation}
whenever $\gamma_0 \geq 64 \pi$. In addition, \eqref{eq:evapor}
implies $|x_i -x_j| > c \gamma \eps$ for $i \neq j$ and some universal
$c>0$, which proves the statement about charge separation.

Let $x_0 \in \Omega_1 \cap \partial B_{d}(x_{i^{\ast}})$, which exists
since by Theorem \ref{exgen} the set $\Omega_1$ is connected.  Now our
hope is to place a charge inside of $\Omega_1$ at $x_0$ and lower the
energy. Using \eqref{ch_dis}, we have that $x_0$ is sufficiently far
away from all the charges:
\begin{equation}
\label{x_0_dis}
    |x_0-x_j| \geq 4 \varepsilon \qquad \forall x_j \in X_1.
\end{equation}
Therefore, by Lemma \ref{density_est} we have that
\begin{equation}
\label{VV_max}
\frac{4 \pi}{3} \varepsilon^3 \geq  | \Omega_1 \cap
B_{\varepsilon}(x_0) |> C \varepsilon^3, 
\end{equation}
for some universal constant $C > 0$. Thus, for $\eps$ small enough
universal we can create a new minimizing candidate
$\{ c(\Omega_1 \cup B_{\varepsilon}(x_0), X_1 \cup \{x_0 \}),
(B_{\varepsilon}(0), \{0\})^{k-2} \}$, where
\begin{equation}
  c= \sqrt[3]{\frac{|\Omega_1|+ \frac{4 \pi}{3}
      \varepsilon^3}{|\Omega_1|-| \Omega_1 \cap B_{\varepsilon}(x_0)
      |+\frac{4 \pi}{3} \varepsilon^3}} \leq
  \sqrt[3]{\frac{1}{1- 8 \varepsilon^3}} < 1+3
  \varepsilon^3,
     \label{C_bbd}
\end{equation}
in which the first inequality is due to \eqref{eq:Om1vol}. Thus,
\eqref{VV_max} with the isoperimetric inequality and \eqref{C_bbd}
gives us the following upper bound on the perimeter of
$c(\Omega_1 \cup B_{\varepsilon}(x_0))$:
\begin{multline}
\label{P_bb}
P(c(\Omega_1 \cup B_{\varepsilon}(x_0))) = c^2 P(\Omega_1 \cup
B_{\varepsilon}(x_0)) \leq c^2 \Big(P(\Omega_1) +4 \pi
\varepsilon^2 - P(\Omega_1 \cap B_{\varepsilon}(x_0)) \Big) \\
\leq (1+10 \varepsilon^3) \left(P(\Omega_1) +4 \pi \varepsilon^2 -
  \sqrt[3]{36 \pi C^2 } \varepsilon^2 \right) \\
\leq P(\Omega_1) +4 \pi \varepsilon^2 - \sqrt[3]{36 \pi C^2 }
\varepsilon^2 + 40 \pi \varepsilon^3 (1+N_\eps \varepsilon^2) \\
\leq P(\Omega_1) +4 \pi \varepsilon^2 - C' \varepsilon^2,
\end{multline}
for $\eps$ small enough universal, where $C' > 0$ is a universal
constant and in the third inequality we used Lemma \ref{lem1}.

Lastly, we will obtain an upper bound on
$V_{\varepsilon}(c(X_1 \cup \{x_0 \}) ) \leq V_{\varepsilon}(X_1 \cup
\{x_0 \}).$ To do this, first note that from the definition of $d$ we
have
\begin{equation}
\label{Last1}
|x_0-x_j| \geq \frac12 |x_{i^{\ast}}-x_j|
\end{equation}
for all $j \neq i^{\ast}$, while
\begin{equation}
\label{Last2}
|x_0-x_{i^\ast}| =  \frac12 |x_{j^\ast} - x_{i^\ast}|
  \end{equation}
  for some $j^\ast \not= i^\ast$.  This gives us that
\begin{multline}
  V_{\varepsilon}(X_1 \cup \{x_0 \}) = V_{\varepsilon}(X_1 )+ \gamma
  \varepsilon^3 \sum_{x_j \in X_1}
  \frac{1}{|x_0-x_j|} \\
  \leq V_{\varepsilon}(X_1 )+ \frac{2 \gamma
    \varepsilon^3}{|x_{j^\ast}-x_{i^\ast}|}+ 2 \gamma \varepsilon^3
  \sum_{j \not = i^\ast}
  \frac{1}{|x_{i^\ast}-x_j|} \\
  \leq V_{\varepsilon}(X_1 ) + 4 \gamma \varepsilon^3 \sum_{j \not =
    i^\ast} \frac{1}{|x_{i^\ast}-x_j|}.
\end{multline}
Now using \eqref{V_bdd}, this gives that
\begin{equation}
\label{V_BB}
V_{\varepsilon}(X_1 \cup \{x_0 \}) \leq  V_{\varepsilon}(X_1 )+  8
\gamma \varepsilon^3 |X_1|. 
\end{equation}
Finally, combining the bound on the perimeter of
$c(\Omega_1 \cup B_{\eps}(x_0))$ given in \eqref{P_bb} with the bound
on $V_{\varepsilon}(c(X_1 \cup \{x_0 \})$ given in \eqref{V_BB}, we
obtain
\begin{multline}
  E_{\varepsilon}\left(c(\Omega_1 \cup B_{\varepsilon}(x_0)),
    c(X_1 \cup \{x_0 \}) \right) + 4 \pi (k-2) \varepsilon^2 \\
  \leq P(\Omega_1) +4 \pi(k-1) \varepsilon^2 - C' \varepsilon^2+
  V_{\varepsilon}(X_1 )+ 8 \gamma \varepsilon^3 |X_1|
  \\=E_{\varepsilon}(\Omega_1,X_1)+4 \pi (k-1) \varepsilon^2 -C'
  \varepsilon^2 + 8 \gamma \varepsilon^3 |X_1| < \sum_{j=1}^k
  E_{\varepsilon}(\Omega_j,X_j)
\end{multline}
whenever $|X_1|<N_\eps<\frac{C' }{ 8 \gamma \varepsilon}$, a
contradiction. Thus, generalized minimizers that are not classical
minimizers cannot exist for these values of $N_\eps$, with $\eps$
sufficiently small and $\gamma$ sufficiently large universal, and the
existence statement of the theorem holds.

To conclude the proof of Theorem \ref{t:exist}, let
$X = \cup_{i=1}^{N_\eps} \{ x_i \}$ be the minimizing set of the
positions of the charges. 
To prove that each $B_\eps(x_i)$ touches $\partial \Omega$,
suppose that, to the contrary, there exists $1 \leq i^* \leq N_\eps$
such that $B_\eps(x_{i^*}) \Subset \Omega$. Therefore, $x_{i^*}$ is a
local minimizer of $v_{i^*}(x) := \sum_{j \not= i^*} |x - x_j|^{-1}$,
since otherwise it would be possible to lower the energy by slightly
displacing the ball $B_\eps(x_{i^*}) \subset \Omega$ without touching
the other balls $B_\eps(x_j)$ with $j \not= i^*$. However, $v_{i^*}$
is a harmonic function in some neighborhood of $x_{i^*}$ and must,
therefore, be constant there, which is impossible as $v_{i^*}$ is a
real analytic function in $\R^3 \backslash \{X \backslash x_{i^*}\}$
that goes to infinity at $X \backslash x_{i^*}$.
\end{proof}

\begin{proof}[Proof of Theorem \ref{t:non}]
  Assume that $(\Omega,X)$ is a classical minimizer. First note that
  for a fixed $\gamma>0$, from Lemma \ref{Outter_Ball} we have that
  $\varepsilon_0,\delta>0$ can be chosen to make
  $\Omega \subset B_2(x)$ for some $x \in \R$. Thus, Lemma \ref{lem1},
  and \cite[Theorem 2]{Wagner90} imply that there exists a universal
  constant $C_0>0$ such that
\begin{equation}
  4 \pi +4 \pi N_\eps \varepsilon^2 \geq E_{\varepsilon}(\Omega,X) \geq 4
  \pi + \frac{\gamma \varepsilon^3 }{2} \left(\frac{N_\eps^2}{2}-C_0
    N_\eps^{\frac{3}{2}} \right) \geq 4 \pi + \frac18 \gamma N_\eps^2
  \varepsilon^3,  
\end{equation}
whenever $N_\eps > \frac{C}{\gamma \varepsilon}$ is large enough
universal. Thus
\begin{equation}
  N_\eps \leq \frac{32 \pi}{\varepsilon \gamma},
\end{equation}
a contradiction.


\end{proof}

\subsection{Convergence}

\begin {proof}[Proof of Theorem \ref{t:ball}]
  The fact that $\Omega_n \subset B_{1+\delta}(0)$ for any
  $\delta > 0$ and all $n \in \mathbb N$ large enough, after suitable
  translations, follows from Lemma \ref{Outter_Ball}. Now, for $N$
  distinct points $x_i \in \R^3$ define
  \begin{align}
    \label{eq:FN}
    F_N(\mu) :=
    \begin{cases}
      {2 \over N^2} \sum_{i=1}^{N-1} \sum_{j=i+1}^{N} {1 \over |x_i -
        x_j|} & \text{if} \ \mu = {1 \over N} \sum_{i=1}^N
      \delta_{x_i}, \\
      +\infty & \text{otherwise},
    \end{cases}
  \end{align}
  Then by \cite[Proposition 2.8]{serfaty-coulomb} we have that
  $\Gamma - \lim_{N \to \infty} F_N = F_\infty$ with respect to the
  weak convergence of probability measures in $\R^3$, where
  \begin{align}
    \label{eq:Finf}
    F_\infty(\mu) :=
    \int_{\R^3} \int_{\R^3} {d \mu(x) \, d \mu(y)
    \over |x - y|}.
  \end{align} 
  Therefore, for
  $\mu_n := {1 \over N_n} \sum_{x_i \in X_n} \delta_{x_i}$ we have
  that, upon suitable translations and extraction of subsequences,
  $\mu_n \to \mu_\infty$ in the sense of measures as $n \to \infty$,
  where $\mu_\infty$ is a probability measure supported on
  $\overline{B}_1(0)$, and
  $\liminf_{n \to \infty} F_{N_n}(\mu_n) \geq
  F_\infty(\mu_\infty)$. At the same time, testing the energy with
  $\Omega = B_1(0)$ and uniformly distributed $N_n$ points supported
  on $\partial B_{1-\eps_n}(0)$ (the existence of the latter follows
  from the construction of the recovery sequence for the above
  $\Gamma$-convergence, together with a scaling argument and the fact
  that $N_n \ll \eps_n^{-2}$, or an explicit construction in
  \cite{Wagner92}), with the help of the isoperimetric inequality we
  obtain
  \begin{align}
    4 \pi + \tfrac12 \gamma \eps_n^3 N_n^2 \left(
    F_{\infty}(\tfrac{1}{4 \pi} \mathcal 
    H^2\lfloor_{\partial \Omega}) + o_n(1) \right) 
    \geq
    E_{\eps_n}(\Omega_n, X_n) \notag \\
    \geq 4 \pi + \tfrac12 \gamma \eps_n^3
    N_n^2 \left( F_\infty(\mu_\infty) - o_n(1) \right). 
  \end{align}
  Thus
  $F_\infty( \tfrac{1}{4 \pi} \mathcal H^2\lfloor_{\partial \Omega})
  \geq F_\infty(\mu_\infty)$. However,
  $\mu = \tfrac{1}{4 \pi} \mathcal H^2\lfloor_{\partial \Omega}$ is
  the unique minimizer of $F_\infty$ among all probability measures
  supported on $\overline{B}_1(0)$. Hence
  $\mu_\infty = \tfrac{1}{4 \pi} \mathcal H^2\lfloor_{\partial
    \Omega}$ and the limit is in fact a full limit.
  \end{proof}

  \section{Case of two charges} \label{s:two}

  In this section, we give an explicit characterization of the
  minimizers in the simplest non-trivial case of $N = 2$ point
  charges. Note that when $N = 1$, the minimizer of $E_\eps$ is always
  a unit ball with the charge located anywhere inside.
  
\subsection{Existence results}
For $N=2$ and $X=\{x_1,x_2\}$, the energy in \eqref{eq:Eeps} becomes
simply 
\begin{equation}
\label{En_N2}
  E_\eps(\Omega, X) = P(\Omega) +\frac{ \gamma \eps^3}{|x_2-x_1|}.
\end{equation}
In this case, the energy of a generalized minimizer that is not
classical is known explicitly and satisfies the estimate below. 
\begin{lemma}
\label{generalized_en}
There exists a universal constant $\varepsilon_0>0$ such that if
$\varepsilon<\varepsilon_0$ and a classical minimizer of the energy in
\eqref{En_N2} does not exist, then the generalized minimizer has the
form $\{(\Omega_1,\{x_1\}),(\Omega_2,\{x_2\}) \}$ with
$x_1, x_2 \in \R^3$, and
\begin{equation}
  4 \pi (1+ \varepsilon^2 - \varepsilon^3)   <
  E_{\varepsilon}(\Omega_1, \{x_1\})+E_{\varepsilon}(\Omega_2, \{x_2\}
  )  < 4 \pi (1+ \varepsilon^2). 
\end{equation}
\end{lemma}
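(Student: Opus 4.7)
The plan is to use the structural results already established for generalized minimizers in order to determine the precise form of the minimizer in the case $N_\varepsilon = 2$, and then to extract the energy bounds from an elementary scalar inequality.

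First, by Theorem \ref{exgen} a generalized minimizer $\{(\Omega_k, X_k)\}_{k=1}^K$ exists; the assumption that no classical minimizer exists forces $K \geq 2$. Since $N_\varepsilon = 2$ certainly satisfies $1 < N_\varepsilon < \delta/\varepsilon^2$ for $\varepsilon$ sufficiently small universal, Lemma \ref{2} applies and guarantees that each component carries at least one charge. Combined with $\sum_{k=1}^K N_k = 2$, this forces $K = 2$ and $N_1 = N_2 = 1$, so the generalized minimizer has exactly the form $\{(\Omega_1, \{x_1\}), (\Omega_2, \{x_2\})\}$ as claimed.

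Next I identify the two components. For a single-charge component the Coulombic term in \eqref{En_N2} vanishes, so $E_\varepsilon(\Omega_k, \{x_k\}) = P(\Omega_k)$, and $(\Omega_k, \{x_k\})$ is a perimeter minimizer among sets of its prescribed volume containing $B_\varepsilon(x_k)$. Lemma \ref{2} applied with $i=2$ gives $|\Omega_2| = \frac{4\pi}{3}\varepsilon^3$, so $\Omega_2 = B_\varepsilon(x_2)$. The volume constraint then forces $|\Omega_1| = \frac{4\pi}{3}(1-\varepsilon^3)$, and since the corresponding radius $r_1 := (1-\varepsilon^3)^{1/3}$ strictly exceeds $\varepsilon$ for $\varepsilon$ small, the solvation inclusion $B_\varepsilon(x_1) \subset \Omega_1$ is not active at the isoperimetric minimizer; by the isoperimetric inequality $\Omega_1$ is therefore (up to translation) a ball of radius $r_1$.

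With the components identified, the total energy is
\begin{equation}
E_\varepsilon(\Omega_1, \{x_1\}) + E_\varepsilon(\Omega_2, \{x_2\}) = 4\pi (1-\varepsilon^3)^{2/3} + 4\pi \varepsilon^2,
\end{equation}
and both inequalities in the statement follow from the elementary estimate
\begin{equation}
1 - t \;<\; (1-t)^{2/3} \;<\; 1, \qquad t \in (0,1),
\end{equation}
applied with $t = \varepsilon^3$.

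The substantive part of the argument is the structural step: using Theorem \ref{exgen} together with Lemma \ref{2} to conclude that $K=2$, $N_1 = N_2 = 1$, and that the smaller component is forced to be exactly $B_\varepsilon(x_2)$. Once that rigidity is in place the remaining reduction to perimeter minimization and the scalar bound are routine and present no real obstacle.
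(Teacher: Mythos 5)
Your proposal is correct and follows essentially the same route as the paper: use Lemma \ref{2} to force $K=2$ with $N_1=N_2=1$, note that single-charge components have vanishing Coulombic energy and are hence isoperimetric minimizers (balls), apply Lemma \ref{2} again to pin down $|\Omega_2| = \frac{4\pi}{3}\varepsilon^3$, compute the energy $4\pi(\varepsilon^2 + (1-\varepsilon^3)^{2/3})$, and conclude via the elementary bound $1-t < (1-t)^{2/3} < 1$. The paper is terser (it just asserts each component is a ball), but your justification of that step, and of why the obstacle constraint for $\Omega_1$ is inactive, is exactly what is implicit there.
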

\begin{proof}
  From Lemma \ref{2}, we have that all the components of a generalized
  minimizer have at least one charge. Hence in the absence of
  classical minimizers the generalized minimizer consists of precisely
  two components, each of which has exactly one charge. Thus each
  component of the generalized minimizer is a ball, and again by Lemma
  \ref{2} we have $|\Omega_2|= \frac{4\pi}{3} \varepsilon^3$. Then
  \begin{align}
    E_{\varepsilon}(\Omega_1, \{x_1\})+E_{\varepsilon}(\Omega_2, \{x_2\} )
  = 4 \pi \left( \varepsilon^2+(1- \varepsilon^3)^{\frac{2}{3}}
  \right),
  \end{align}
  and the statement follows.
\end{proof}
Using the density estimates from Lemma \ref{density_est}, we have that
if $(\Omega,X)$ is a minimizer to \eqref{En_N2} then it is contained
inside a ball of radius close to one.

\begin{lemma}\label{N2bbd}
There exist universal constants $\varepsilon_0, C >0$ such that if
$\varepsilon < \varepsilon_0$ and $(\Omega,X)$ is a minimizer to
\eqref{En_N2} then
$\Omega \subset B_{1+ C \sqrt[3]{\varepsilon}}(x_0)$ for some
$x_0 \in \mathbb{R}^3$.
\end{lemma}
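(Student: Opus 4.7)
The strategy is to upgrade an $L^1$-closeness estimate of $\Omega$ to a unit ball, obtained via the quantitative isoperimetric inequality, into an $L^\infty$-type bound using the density estimate of Lemma \ref{density_est} applied along a connected path in $\overline\Omega$ reaching the farthest point of $\Omega$ from the ball's center.

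First, by Lemma \ref{generalized_en} (or by directly testing the energy against a unit ball containing two antipodally placed charges), the minimizer satisfies $E_\varepsilon(\Omega,X) \leq 4\pi(1+\varepsilon^2)$; since the Coulombic term is non-negative, $P(\Omega) \leq 4\pi(1+\varepsilon^2)$, so the isoperimetric deficit is of order $\varepsilon^2$. The quantitative isoperimetric inequality \cite{fusco08} then provides $x_0 \in \R^3$ with $|\Omega \setminus B_1(x_0)| \leq |\Omega \Delta B_1(x_0)| \leq C_0 \varepsilon$ for a universal $C_0 > 0$.

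Now suppose, by way of contradiction, that there exists $y^* \in \overline{\Omega}$ with $|y^* - x_0| = 1+L$ for $L \geq C\varepsilon^{1/3}$, where $C$ is a large universal constant to be chosen. The connectedness of $\Omega$ from Theorem \ref{exgen} together with its $C^{1,1}$ regularity ensures that $\overline{\Omega}$ is path-connected, so the intermediate value theorem applied to $x \mapsto |x-x_0|$ along a path from the bulk of $\Omega$ to $y^*$ yields, for every $s \in (1, 1+L)$, a point $p_s \in \overline{\Omega} \cap \partial B_s(x_0)$. Writing $s_j := |x_j - x_0|$ for $j=1,2$ and noting that $|p - x_j| \geq |s-s_j|$ for any $p \in \partial B_s(x_0)$, I set $r := \min(L/10, R_*)$, where $R_*$ is the universal threshold from Lemma \ref{density_est} for $M=1$ (admissible since $N_\varepsilon = 2 < 1/\varepsilon^2$ for $\varepsilon$ small). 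For $s$ in the ``good set'' $G := (1+r, 1+L-r) \setminus \bigcup_{j=1}^2 (s_j - \varepsilon - r, s_j + \varepsilon + r)$, the point $p_s$ lies at distance greater than $\varepsilon+r$ from both charges, so Lemma \ref{density_est} applies at $p_s$ with radius $r$.

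Since $|G| \geq L - 6r - 4\varepsilon$, the set $G$ is non-empty whenever $L$ exceeds a fixed multiple of $\max(\varepsilon, R_*)$. Picking $s^* \in G$, Lemma \ref{density_est} gives $|\Omega \cap B_r(p_{s^*})| \geq c_1 r^3$ for some universal $c_1 > 0$, and since $|p_{s^*} - x_0| - r > 1$ we have $B_r(p_{s^*}) \subset B_1^c(x_0)$, hence $c_1 r^3 \leq C_0 \varepsilon$. If $r = R_*$, i.e.\ $L > 10 R_*$, this is impossible for $\varepsilon$ sufficiently small universal; otherwise $r = L/10$ and we obtain $L \leq (1000 C_0 / c_1)^{1/3} \varepsilon^{1/3}$, contradicting $L \geq C \varepsilon^{1/3}$ for $C$ large enough universal. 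The residual regime $L \lesssim \varepsilon$ is handled trivially since $\varepsilon \leq \varepsilon^{1/3}$ for $\varepsilon < 1$. The main subtlety lies in applying the density estimate uniformly in $s$ despite the presence of the charges; since only two of them can obstruct radii $s$ with $|s - s_j| \leq \varepsilon + r$, the total obstructed length is $\mathcal{O}(\varepsilon + r)$, which leaves room for a good $s^*$ provided $L$ is not too small, while the truncation $r = \min(L/10, R_*)$ supplies the a priori diameter bound needed to control the applicability of Lemma \ref{density_est}.
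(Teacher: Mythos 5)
Your proof is correct and follows essentially the same approach as the paper: the quantitative isoperimetric inequality gives the $L^1$-closeness bound \eqref{volume_Oball}, and Lemma \ref{density_est} combined with connectedness upgrades this to the $L^\infty$-type containment. Your ``good set'' argument selecting a radius $s^*$ at which $p_{s^*}$ is far from both charges carefully fills in the application of the density estimate that the paper's terse proof compresses into a single sentence.
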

\begin{proof}
  Let $(\Omega,X)$ be a minimizer. Then from Lemma
  \ref{generalized_en} we have that
\begin{equation}
    P(\Omega) < E_{\varepsilon} (\Omega,X) < 4 \pi (1+\varepsilon^2).
\end{equation}
By quantitative isoperimetric inequality \cite{fusco08}, this gives us
a bound on the Fraenkel asymmetry of $\Omega$, namely, that there
exists $x_0 \in \mathbb{R}^3$ and a universal constant $C_0 > 0$ such
that
\begin{equation}
\label{volume_Oball}
     | \Omega \Delta B_{1}(x_0) | < C_0 \varepsilon.
\end{equation}
Now assume that the claim of the lemma is false, i.e., that $\Omega$
is not contained in $B_{1+ C \sqrt[3]{\varepsilon}}(x_0)$ for
arbitrary $C > 0$ and $\eps$ small enough. Then Lemma
\ref{density_est} tells us that there exist universal constants
$C_1 > 0$ and $\varepsilon_0 > 0$ such that when
$\varepsilon < \varepsilon_0$ we have
\begin{equation}
     | \Omega \Delta B_{1}(x_0) | > C_1 C^3 \varepsilon,
\end{equation}
which contradicts \eqref{volume_Oball} for a suitable choice of $C$.
\end{proof}

From the above lemmas, we have the following existence/non-existence
result for classical minimizers.
\begin{lemma}
\label{Exist_MinN2}
There exist universal constants $\varepsilon_0, C>0$ such that if
$\varepsilon < \varepsilon_0$ and
$\gamma < \frac{8 \pi}{\varepsilon}-C$ then there exists a minimizer
$(\Omega,X)$ to \eqref{En_N2} among all $(\Omega, X)$ admissible, and
if
$\gamma > \frac{8 \pi}{\varepsilon}+
\frac{C}{\varepsilon^{\frac{2}{3}}}$ then there is no minimizer.
\end{lemma}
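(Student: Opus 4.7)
The overall plan is to exploit two opposing comparisons. On the one hand, by Theorem \ref{exgen} a generalized minimizer always exists, and by Lemma \ref{generalized_en} a non-classical one must have total energy exactly $4\pi\bigl(\varepsilon^2+(1-\varepsilon^3)^{2/3}\bigr)$. So to rule out a split configuration, it suffices to exhibit one admissible single-component competitor with strictly smaller energy. On the other hand, assuming a classical minimizer exists, Lemma \ref{N2bbd} confines it to a ball of radius $1+O(\varepsilon^{1/3})$, which forces the two charges to be close and so gives a lower bound on the Coulombic contribution. The threshold $\gamma\sim 8\pi/\varepsilon$ arises by matching these two bounds.

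For the existence statement, I would take the competitor
\begin{equation*}
\Omega=B_1(0),\qquad X=\{(1-\varepsilon)e_1,\,-(1-\varepsilon)e_1\},
\end{equation*}
which lies in $\mathcal{A}_\varepsilon$ for $\varepsilon$ small (the two balls $B_\varepsilon(\pm(1-\varepsilon)e_1)$ are internally tangent to $\partial B_1(0)$ and disjoint). Its energy is exactly $4\pi+\frac{\gamma\varepsilon^3}{2(1-\varepsilon)}$. Using $(1-\varepsilon^3)^{2/3}=1-\tfrac23\varepsilon^3+O(\varepsilon^6)$, the inequality
\begin{equation*}
4\pi+\frac{\gamma\varepsilon^3}{2(1-\varepsilon)}<4\pi\bigl(\varepsilon^2+(1-\varepsilon^3)^{2/3}\bigr)
\end{equation*}
rearranges to $\gamma<\frac{8\pi}{\varepsilon}-\frac{40\pi}{3}+O(\varepsilon)$, so $\gamma<\frac{8\pi}{\varepsilon}-C$ for some universal $C>0$ is sufficient. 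Combining with Lemma \ref{generalized_en}, this rules out a split generalized minimizer; by Theorem \ref{exgen}, the generalized minimizer is classical, giving the desired minimizer.

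For non-existence, suppose toward contradiction that $(\Omega,X)$ is a minimizer. By Lemma \ref{N2bbd}, $\Omega\subset B_{1+C'\varepsilon^{1/3}}(x_0)$ for some universal $C'>0$, so $|x_1-x_2|\le 2+2C'\varepsilon^{1/3}$. Combining the isoperimetric bound $P(\Omega)\ge 4\pi$ with the Coulombic lower bound yields
\begin{equation*}
E_\varepsilon(\Omega,X)\ge 4\pi+\frac{\gamma\varepsilon^3}{2+2C'\varepsilon^{1/3}}.
\end{equation*}
On the other hand, by minimality and the fact that the energy of two far-apart balls of radii $(1-\varepsilon^3)^{1/3}$ and $\varepsilon$ approaches $4\pi\bigl(\varepsilon^2+(1-\varepsilon^3)^{2/3}\bigr)$ from above, we also have $E_\varepsilon(\Omega,X)\le 4\pi\bigl(\varepsilon^2+(1-\varepsilon^3)^{2/3}\bigr)$. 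Expanding the right-hand side and rearranging forces $\gamma\le \frac{8\pi}{\varepsilon}+\frac{8\pi C'}{\varepsilon^{2/3}}+O(1)$, which contradicts the hypothesis $\gamma>\frac{8\pi}{\varepsilon}+\frac{C}{\varepsilon^{2/3}}$ as soon as $C$ is chosen sufficiently large relative to $C'$.

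The analysis is short because Lemmas \ref{generalized_en} and \ref{N2bbd} carry the technical weight. The only delicate point is that the gap $\frac{C}{\varepsilon^{2/3}}$ in the threshold precisely reflects the $O(\varepsilon^{1/3})$ slack in the containment from Lemma \ref{N2bbd}; any improvement here (such as $O(\varepsilon)$-closeness of $\Omega$ to a ball) would narrow the window, and is exactly the improvement later achieved in Theorem \ref{t:2} via the more refined unduloid analysis.
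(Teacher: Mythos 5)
Your proof is correct and follows essentially the same route as the paper's: the existence half tests the ball competitor $(B_1(0),\{\pm(1-\varepsilon)e_1\})$ against the generalized-minimizer energy from Lemma \ref{generalized_en}, and the non-existence half combines the containment $\Omega\subset B_{1+C'\varepsilon^{1/3}}(x_0)$ from Lemma \ref{N2bbd} with the same upper bound, deriving a contradiction for $\gamma$ above the threshold. The constants and expansions you computed agree with the paper's up to notation.
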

\begin{proof}
  If $\gamma < \frac{8 \pi}{\varepsilon}-C$ and $\eps \ll 1$, then
  consider an admissible test configuration in the form of a ball with
  two charges at the opposite extremes,
  $(\Omega, X) = \big (B_{1}(0), \{ -(1 - \varepsilon) e_1, (1-
  \varepsilon) e_1 \} \big)$, for which we have
\begin{equation}
    E_{\varepsilon}(\Omega, X) = 4 \pi + \frac{\gamma \varepsilon^3}{2
      -  2 \varepsilon} < 4 \pi + \frac{8 \pi \varepsilon^2- C
      \varepsilon^3}{2 -  2 \varepsilon} . 
    \label{E_bbd}
\end{equation}
Picking $C>16 \pi$, from \eqref{E_bbd} we then get that
\begin{equation}
  E_{\varepsilon}(\Omega, X) < 4 \pi + 4 \pi \varepsilon^2 - \frac{8
    \pi \varepsilon^3}{2- 2 \varepsilon} <  4 \pi + 4 \pi
  \varepsilon^2 -  4 \pi \varepsilon^3. 
          \label{E_bbd1}
\end{equation}
Thus from Theorem \ref{exgen} and Lemma \ref{generalized_en}, we have
that the energy in \eqref{En_N2} must have a classical minimizer.

To prove non-existence, note that when
$\gamma > \frac{8 \pi}{\varepsilon}+
\frac{C}{\varepsilon^{\frac{2}{3}}}$, from Lemma \ref{N2bbd} we have
that for $(\Omega,X)$ admissible and $\eps \ll 1$ there exists a
universal constant $C_0 > 0$ such that
\begin{equation}
  E_{\varepsilon}(\Omega,X) \geq 4 \pi + \frac{\gamma \varepsilon^3}{2
    + C_0 \sqrt[3]{\varepsilon}} > 4 \pi + \frac{8 \pi \varepsilon^2
    +C \varepsilon^{\frac{7}{3}}}{2 + C_0 \sqrt[3]{\varepsilon}}> 4
  \pi + 4 \pi \varepsilon^2, 
\end{equation}
whenever $C> 4 \pi C_0$ and $\eps$ is sufficiently small. However,
Lemma \ref{generalized_en} implies that $(\Omega,X)$ cannot be a
minimizer and the Lemma is proved.
\end{proof}

In the case $N=2$, we can use rotational symmetry of the problem about
the axis passing through the two charges to explicitly solve for
global minimizers. Without loss of generality, let $X = \{x_1, x_2\}$,
with $x_1$ and $x_2$ located on the $x$-axis. Furthermore, let
$ \mathcal T \Omega$ denote the Schwarz symmetrization of $\Omega$
with respect to the $x$-axis, i.e. let
\begin{equation}
     \mathcal T \Omega = \{ (x, y,z) \in \mathbb R^3 : (y,z) \in A_x^\ast  \},
\end{equation}
where $A_x= \{(y,z) \in \mathbb R^2 : (x,y,z) \in \Omega \}$ and
$A_x^\ast= B_{r}(0) \in \mathbb R^2$ such that
$ \mathcal{L}^2 (A_x^\ast) = \mathcal{L}^2( A_x)$ denotes the two
dimensional symmetric rearrangement of $A_x$.
\begin{lemma}
\label{rotational}
Let $N=2$, let $x_1, x_2 \in \R \times (0,0)$ and let
$(\Omega, \{x_1,x_2\}) \in \mathcal A_{m,N,\rho} $ be a minimizer of
$E_{\rho,\lambda,N}$.  Then $\Omega = \mathcal T \Omega$.
\end{lemma}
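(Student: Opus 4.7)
The plan is to argue by a direct symmetrization comparison: apply the Schwarz symmetrization $\mathcal{T}$ with respect to the $x$-axis to $\Omega$ and show that $(\mathcal{T}\Omega, \{x_1, x_2\})$ is an admissible competitor with energy no larger than that of $(\Omega, \{x_1, x_2\})$. The Coulombic term $\lambda/|x_2 - x_1|$ is automatically preserved, since $x_1$ and $x_2$ lie on the rotation axis and are unaffected by $\mathcal{T}$. The perimeter term obeys $P(\mathcal{T}\Omega) \leq P(\Omega)$ by the standard P\'olya--Szeg\H{o}-type inequality for Schwarz symmetrization of sets of finite perimeter.

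To verify admissibility of $(\mathcal{T}\Omega, \{x_1, x_2\})$, I would check three things. First, $|\mathcal{T}\Omega| = |\Omega| = m$ by construction. Second, the balls $B_\rho(x_i)$ themselves are unchanged, so their mutual disjointness is retained. The only non-trivial point is that $B_\rho(x_i) \subset \mathcal{T}\Omega$. Writing $x_i = (a_i,0,0)$, the slice $A_t = \{(y,z) : (t,y,z) \in \Omega\}$ contains the $2$D disk of radius $\sqrt{\rho^2 - (t-a_i)^2}$ centered at the origin whenever $|t - a_i| < \rho$. Since $A_t^*$ is the open disk centered at the origin of the same planar area as $A_t$, it must contain this on-axis disk as well; hence $B_\rho(x_i) \subset \mathcal{T}\Omega$ in the measure-theoretic sense.

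With admissibility in hand, the minimality of $(\Omega, \{x_1, x_2\})$ forces $P(\mathcal{T}\Omega) = P(\Omega)$. The main obstacle is then extracting geometric rigidity from this equality. I would invoke the characterization of the equality case for Schwarz symmetrization of sets of finite perimeter (Cianchi--Fusco-type rigidity, in the spirit of Brothers--Ziemer for functions): this yields that for a.e. $t$, the slice $A_t$ agrees with $A_t^*$ up to a translation $c(t) \in \R^2$. The constraint $B_\rho(x_i) \subset \Omega$ with $x_i$ on-axis forces $c(t) = 0$ on a neighborhood of each $a_i$, and the $C^{1,1}$-regularity and connectedness of $\partial \Omega$ established in Theorem \ref{exgen} then allow this alignment to propagate globally. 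Concluding that $c \equiv 0$ gives $\Omega = \mathcal{T}\Omega$ as claimed.
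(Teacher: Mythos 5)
Your setup—checking that $(\mathcal T\Omega, \{x_1,x_2\})$ is admissible, observing that the Coulomb term is unchanged and the perimeter drops, and invoking the Schwarz-symmetrization rigidity theorem—matches the paper up to the citation (the paper uses Barchiesi--Cagnetti--Fusco). The gap is in your final paragraph, and it is a genuine one, not just a missing detail.

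First, the claim that ``the constraint $B_\rho(x_i)\subset\Omega$ with $x_i$ on-axis forces $c(t)=0$ on a neighborhood of each $a_i$'' is false. After rigidity, each slice $A_t$ is a disk of radius $r(t)$ centered at $c(t)$. The constraint says this disk contains the on-axis disk of radius $\rho_i(t):=\sqrt{\rho^2-(t-a_i)^2}$, which only yields $|c(t)|\le r(t)-\rho_i(t)$; since $r(t)$ is typically much larger than $\rho_i(t)$ (e.g.\ near $t=a_i$ for a drop of volume $m\gg\rho^3$), this does not pin down $c(t)$ at all. Consequently, the ``propagation'' step built on it has nothing to propagate.

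Second, even granting the stronger form of rigidity (a single constant translation $v$ in the $yz$-plane, which is what the paper extracts after verifying the non-degeneracy hypotheses via analyticity of the free boundary and connectedness), the conclusion $v=0$ still requires real work, and your proposal does not supply it. The paper's argument is: if $\Omega=\mathcal T\Omega+v$ with $v\ne 0$, then since both $\Omega$ and $\mathcal T\Omega$ contain $B_\rho(x_i)$ and $\mathcal T\Omega$ has disk slices centered at the origin, one gets $B_\rho(x_i)+\lambda v\subset\mathcal T\Omega$ for all $\lambda\in[-1,1]$. This forces either strict containment of $B_\rho(x_i)$ in $\mathcal T\Omega$ (in which case one could move the two balls slightly farther apart and lower the Coulomb energy, contradicting minimality) or contact only at the poles of $B_\rho(x_i)$ on the $x$-axis (which forces $\partial\Omega$ to degenerate there, contradicting the analyticity of the constant-mean-curvature free boundary). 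Your proof sketch contains neither the swept-ball observation nor either branch of this dichotomy, so the key step of the lemma is missing.
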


\begin{proof}
  Note that from Fubini's theorem we have that
  $|\mathcal T \Omega| =| \Omega|=m $. Furthermore,
  $|\mathcal T \Omega \cap B_{\rho}(x_1)| =|\mathcal T \Omega \cap
  B_{\rho}(x_2)| = | B_{\rho}(0) |$. To see this, note that
  $| \Omega \cap B_{\rho}(x_1)|=| B_{\rho}(x_1) |$ implies that
  $ \mathcal{L}^2 (A_x) \geq \mathcal{L}^2( \{(y,z) \in \mathbb R^2 :
  (x,y,z) \in B_{\rho}(x_1) \}) $ for almost every $x \in \mathbb R
  $. Since $\{(y,z) \in \mathbb R^2 : (x,y,z) \in B_{\rho}(x_1) \}$ is
  also a ball in $\mathbb R^2$, we have that
  $\{(y,z) \in \mathbb R^2 : (x,y,z) \in B_{\rho}(x_1) \} \subset
  A_x^{\ast}$ for almost every $x \in \mathbb R $. Thus, by Fubini’s
  theorem $|\mathcal T \Omega \cap B_{\rho}(x_1)| =| B_{\rho}(x_1) |$,
  which gives that $(\mathcal T \Omega, \{x_1,x_2\})$ is also
  admissible.

  By \cite[Theorem 1.1]{barchiesi13}, we have that
  $P(\mathcal T \Omega) \leq P(\Omega)$, so $\mathcal T \Omega$ is
  also a minimizer, and by minimality of the energy this inequality is
  in fact an equality. Therefore, by \cite[Theorem 1.2]{barchiesi13}
  the sets $\Omega$ and $\mathcal T \Omega$ are equal up to a
  translation in the $yz$-plane. The latter follows from the fact that
  as a minimizer the set $\mathcal T \Omega$ is open and connected,
  and away from $B_\rho(x_1)$ and $B_\rho(x_2)$ the set $\Omega$ is a
  local volume-constrained minimizer of the perimeter, implying that
  $\partial \Omega$ is analytic \cite{maggi} and, hence, that the
  non-degeneracy assumptions of \cite[Theorem 1.2]{barchiesi13} are
  satisfied.
  
  Finally, assume by contradiction that
  $\Omega = \mathcal T \Omega + v$ for some vector $v\ne 0$ contained
  in the $yz$-plane.  Since $\mathcal T \Omega$ contains the two balls
  $B_\rho(x_{1,2})$, it follows that $\Omega$ also contains the
  translated balls $B_\rho(x_{1,2})+ \lambda v$, for all
  $\lambda\in [-1,1]$. Therefore, each $\partial B_\rho(x_{1,2})$
  could touch $\partial (\mathcal T \Omega)$ only at a point lying on
  the $x$-axis. But that is also impossible, since in that case
  $\partial \Omega$ would be flat near those points, contradicting
  once again the analyticity of $\partial \Omega$. Thus,
  $B_\rho(x_{1,2})$ are both strictly contained in
  $\mathcal T \Omega$. However, the latter contradicts the minimizing
  property of $(\mathcal T \Omega, \{x_1, x_2\})$, since one could
  reduce the energy by moving $B_\rho(x_{1,2})$ slightly further apart
  while still keeping them in $\mathcal T \Omega$.
\end{proof}

Since, according to Lemma \ref{rotational}, every minimizer to
\eqref{En_N2} coincides with its Schwarz symmetrization around the
axis connecting the two charges, it can be defined with the help of a
profile function $\varphi : \R \to [0, \infty)$, defining $\Omega$, up
to a rotation, as
\begin{align}
  \label{eq:prof}
  \Omega = \left\{ (x, y, z) \in \R^3: 0 < \sqrt{y^2 + z^2} <
  \varphi(x) \right\}.
\end{align}
By Theorem \ref{exgen}, the function $\varphi$ is of class $C^{1,1}$
on the set $\{x \in \R : \varphi(x) > 0 \}$. Furthermore, the support
of $\varphi$ is a single bounded interval, and $\varphi$ is smooth
  and solves the constant mean curvature equation outside the set of
  charges \cite[Section 3.6]{delaunay41,oprea}:
  \begin{align}
    \label{eq:H}
    -{\varphi''(x) \over (1 + {\varphi'}^2(x))^{3/2}} + {1 \over \varphi(x) (1
    + {\varphi'}^2(x))^{1/2}} = 2 H \quad \text{if} \quad (x -
    x_{1,2})^2 + \varphi^2(x) > \eps^2. 
  \end{align}
  Here $H \in \R$ is the mean curvature of
  $\partial \Omega \setminus (\partial B_{\varepsilon}(x_1) \cup
  \partial B_{\varepsilon}(x_2))$ defined as the average of the
  principal curvatures, positive if $\Omega$ is convex.

\begin{lemma}
\label{unduloid_arc}
There exists a universal constant $\varepsilon_0 > 0$ such that if
$\varepsilon<\varepsilon_0$, $N=2$,
$(\Omega, \{x_1,x_2\}) \in \mathcal A_{\frac{4 \pi}{3},N, \varepsilon}
$, and $(\Omega, \{x_1,x_2\})$ is a minimizer to \eqref{En_N2}, then
its free surface
$\partial \Omega \setminus (\partial B_{\varepsilon}(x_1) \cup
\partial B_{\varepsilon}(x_2))$ is a single section of an unduloid.
\end{lemma}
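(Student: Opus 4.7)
The plan is to combine a first-variation analysis yielding the constant-mean-curvature condition on the free surface, the rotational symmetry established in Lemma \ref{rotational}, and the classical Delaunay classification, and then to argue connectedness.

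First I would derive the CMC condition. For any vector field $\xi \in C^\infty_c(\R^3 \setminus (\overline{B}_\varepsilon(x_1) \cup \overline{B}_\varepsilon(x_2));\R^3)$, the flow $\Phi_t$ generated by $\xi$ produces an admissible competitor $(\Phi_t(\Omega),X) \in \mathcal{A}_{4\pi/3,2,\varepsilon}$ for $|t|$ small, possibly after a small volume-correcting dilation concentrated away from the two balls. Since $X$ is fixed under this flow, the Coulombic term in \eqref{En_N2} is unchanged, so minimality of $(\Omega,X)$ forces $\Omega$ to be a local volume-constrained minimizer of the perimeter in the class of sets containing $\overline{B}_\varepsilon(x_1) \cup \overline{B}_\varepsilon(x_2)$. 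The standard Euler--Lagrange argument with a Lagrange multiplier for the volume constraint then yields that $\partial \Omega$ has constant mean curvature $H$ on the free surface $\Gamma := \partial \Omega \setminus (\partial B_\varepsilon(x_1) \cup \partial B_\varepsilon(x_2))$, with $H > 0$ since $\Omega$ has finite positive volume. Elliptic regularity for the CMC equation upgrades $\Gamma$ from $C^{1,1}$ (Theorem \ref{exgen}) to real analytic.

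By Lemma \ref{rotational}, $\Gamma$ is an analytic CMC surface of revolution about the $x$-axis, hence by Delaunay's theorem a piece of a sphere, cylinder, unduloid, or nodoid (the catenoid case $H=0$ being excluded). Since $\Omega$ is given as the rotation of the graph of the single-valued function $\varphi$ from \eqref{eq:prof}, the generating profile of $\Gamma$ must itself be a graph over the $x$-axis, ruling out the nodoid, whose profile is multi-valued; the sphere and cylinder are limiting degenerate unduloid cases. Connectedness of $\Gamma$ follows because $\Omega$, being bounded, open, connected (Theorem \ref{exgen}), and rotationally symmetric with $\varphi$ supported on a single bounded interval, has boundary topologically a $2$-sphere; removing the two disjoint closed axisymmetric contact sets $\partial B_\varepsilon(x_i) \cap \partial \Omega$ leaves a connected annular surface, namely $\Gamma$. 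A connected analytic CMC surface of revolution with given $H$ lies on a single Delaunay unduloid up to rigid motion, so $\Gamma$ is a single section of an unduloid.

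The main obstacle is the first step: carefully constructing volume-preserving admissible variations that respect both the inclusion $\overline{B}_\varepsilon(x_i) \subset \Omega$ and the non-overlap constraint, and verifying that the resulting CMC Euler--Lagrange equation holds on all of $\Gamma$ with a single Lagrange multiplier, not merely on components disjoint from the contact sets. Once this is settled, identification of $\Gamma$ as a single unduloid arc is routine from the Delaunay classification together with the graph structure of $\varphi$.
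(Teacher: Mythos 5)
Your high-level strategy (first variation $\Rightarrow$ CMC, rotational symmetry from Lemma \ref{rotational}, then Delaunay) matches the paper, but you have misidentified where the genuine work lies. The CMC step you flag as the "main obstacle" is in fact routine: away from the obstacles $\partial B_\varepsilon(x_{1,2})$ the set $\Omega$ is a local volume-constrained perimeter minimizer, and the CMC condition with a single Lagrange multiplier on all of $\Gamma$ is standard. The non-routine parts are exactly the ones you call "routine", and each has a gap.

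First, your dismissal of the nodoid is too quick. You rule it out because "its profile is multi-valued", but an arc of a nodary curve that stays strictly below its vertical-tangent point \emph{is} a single-valued graph over the axis, so the graph structure of $\varphi$ alone does not exclude it. The paper closes this by deriving the tangency condition \eqref{height}, which shows that the arc meets each obstacle at a unique height $h\le\varepsilon$; since any graph-arc of a nodary curve stays below the vertical-tangent height (which is at most $h$), the whole profile would stay below $\varepsilon$, forcing $\Omega$ into a thin tube that violates the volume constraint combined with the diameter bound of Lemma \ref{N2bbd}. Second, you fold the sphere ($C_0=0$) into "degenerate unduloid cases", but the tangency condition then forces $h=0$, i.e.\ the balls touch $\partial\Omega$ only at the poles; the paper must and does exclude this by a first-variation argument (pushing the charges apart gains $O(\delta)$ Coulombic energy at an $O(\delta^2)$ perimeter cost), since the subsequent analysis needs $a>0$ strictly. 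Third, your connectedness argument ("remove two disjoint closed contact sets from a topological $2$-sphere and get an annulus") presupposes that each contact set $\partial B_\varepsilon(x_i)\cap\partial\Omega$ is a single spherical cap. A priori it could be an annular band or a disjoint union thereof, in which case removal disconnects the surface. The paper's argument for a single arc is different and essential: uniqueness of the contact height $h$ from \eqref{height}, together with the mean-curvature constraint $H=\frac{1}{a_1+c_1}=1+o(1)$ forced by the volume and Lemma \ref{N2bbd} on the arc with $c_1=1+o(1)$, makes it geometrically impossible for a second catenary arc (which would have to contact the same charge twice at height $h\le\varepsilon$) to also have maximum $\approx 1$. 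Without these quantitative ingredients the topological shortcut does not go through.
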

\begin{proof}
  By Lemma \ref{rotational}, every minimizer $(\Omega, \{x_1,x_2\})$
  to \eqref{En_N2} is rotationally symmetric, and away from the
  obstacles $\partial B_\eps(x_{1,2})$ the surface $\partial \Omega$
  satisfies \eqref{eq:H} as a local minimizer of the perimeter
  \cite{maggi}. Since by the volume constraint we have
    $\varphi(x_0) > \eps$ for all $\eps$ small enough universal, where
    $x_0$ is the point of maximum of $\varphi$, there holds $H >
    0$. Furthermore, we have $H \leq M$ for all $\eps$ small enough
    universal and some universal constant $M > 0$. To see the latter,
    suppose to the contrary that $H > M$ for any $M > 0$ and some
    $\eps$ sufficiently small. Then for all
    $\varphi(x) \geq 1 / M > \eps$ in a neighborhood of $x_0$ we have
    by \eqref{eq:H}:
    \begin{align}
      \label{eq:HH}
      -{\varphi''(x) \over (1 + {\varphi'}^2(x))^{3/2}} \geq M.
    \end{align}
    In particular, $\varphi$ takes the value
    $\varphi(x_1) = 1/M > \eps$ at some $x_1 < x_0$ for $\eps$
    sufficiently small and is concave on $[x_1, x_0]$. We now multiply
    \eqref{eq:HH} by $\varphi'(x) \geq 0$ and integrate over
    $[x_1, x_0]$ to obtain
  \begin{align}
    \label{eq:HHH}
    1 - {1 \over \sqrt{1 + {\varphi'}^2(x_1)}} \geq M
    \varphi(x_0) - 1,
  \end{align}
  which yields $\varphi(x_0) \leq 2 / M$, again contradicting the
  volume constraint for large enough $M$.

  In view of $H > 0$, the profile function $\varphi$ must satisfy
  \cite[Section 3.6]{delaunay41,oprea}
\begin{equation}
   \varphi' = \pm \sqrt{\frac{  \varphi^2} {( H \varphi^2+C_0 )^2}-1}
   \label{Euler-Lagrange}
 \end{equation}
 for some $C_0 \in \R$ away from the set of charges. Furthermore, when
 the free surface
 $\partial \Omega \backslash (\partial B_\eps(x_1) \cup \partial
 B_\eps(x_2))$ touches the obstacle, say,
 $\partial B_\varepsilon(x_1)$ at height $h$ (distance from the
 $x$-axis), then from the $C^{1,1}$ regularity of the minimizers the
 tangency condition at the point of contact gives
\begin{equation}
    C_0= \frac{ \pm 1- H \varepsilon }{\varepsilon} h^2.
    \label{height} 
\end{equation}
Rewriting this equation by solving for the positive height of contact
$h$, gives that $h$ is unique for fixed $C_0$, $H$, and $\varepsilon$,
whenever $\eps$ small enough universal. This tells us that a segment
of $\varphi$ satisfying \eqref{Euler-Lagrange} must leave and connect
to the obstacles at the same height.

If $C_0 <0$, then \eqref{Euler-Lagrange} gives that the graph of
$\varphi$ is an arc of a nodary curve. However, this is impossible. To
see why, first notice that a nodary curve can only touch the upper
  hemisphere of $B_\eps(x_1)$. Hence the value of $\varphi$ at which
  the nodary has a vertical slope satisfies
  $\varphi^2 = -C_0 / H < h^2$. At the same time, since $H$ is
  uniformly bounded, from \eqref{height} with $C_0 < 0$ and all $\eps$
  sufficiently small universal we obtain
  \begin{align}
    \label{eq:hvarphi}
    h^2 = -{ \eps C_0 \over 1 + \eps H} < -{C_0 \over H},
  \end{align}
  a contradiction. 
Thus, $C_0 \geq 0$.

\par If $C_0=0$, we get that $\Omega$ is a ball of radius one, which
is the limit case of $C_0 >0$ and for which the charges would touch
the boundary of the ball from inside at the diametrically opposite
points. However, it is not difficult to see that this is impossible,
using a first variation type argument by displacing the charges
further away by distance $0 < \delta \ll 1$ and gaining $O(\delta)$ in
the Coulombic energy, while losing only $O(\delta^2)$ in the
perimeter. Hence $C_0 >0$, and we get that the graph of $\varphi$ is
an arc of an elliptic catenary creating a corresponding unduloid
section as its surface of revolution. Up to translations we will
characterize an unduloid by its minimum height $a$ and its maximum
height $c$. Thus, $\Omega$ is the graph of $\varphi$ that consists of
arcs of elliptic catenary curves.
\par
To show that $\varphi$ contains only one section of an elliptic
catenary arc, note that the maximum height $c_1$ of at least one
elliptic catenary arc contained in $\varphi$
satisfies \begin{equation} c_1=1 + o(1),
\end{equation}
where $o(1)$ is with respect to $\varepsilon \ll 1$. This follows
directly from our volume constraint and Lemma
\ref{N2bbd}. Furthermore, let $a_1 \leq \varepsilon$ denote the
minimum height of this elliptic catenary.  Thus, the mean curvature of
the unduloid formed by this elliptic catenary arc is given by
\begin{equation}
H_1= \frac{1}{a_1+c_1}.
\end{equation}
Since the mean curvature of the free surface is constant, this implies
that
\begin{equation}
\label{MC_1}
 H= 1+o(1).
\end{equation}

Assume that the graph of $\varphi$ contains more than one elliptic
catenary arc, then at least one arc must contact the same charge at
two distinct points. Furthermore, \eqref{height} implies that both
contact points happen at the same height $0<h< \varepsilon$. Now let
$a_2$ and $c_2$ denote the minimum and maximum of this elliptic
catenary, then for $\varepsilon$ sufficiently small \eqref{MC_1} gives
us that $c_2= 1+ o(1)$. However, for sufficiently small $\varepsilon$
this is impossible, since the elliptic catenary contacts the same
charge at two distinct points.
\end{proof}

Here we state the following parametrization for an elliptic catenary,
which is obtained from \cite{hadzhilazova07}.
\begin{lemma}
\label{parametrization}
Up to translations, one period of an elliptic catenary with minimum
height $a$ and maximum height $c$ has the following parametrization:
\begin{equation}
\label{x_parametrization}
x(t)= a F \left(\frac{t}{2}-\frac{\pi}{4} , \frac{c^2-a^2}{c^2}
\right) + c  E\left(\frac{t}{2}-\frac{\pi}{4} , \frac{c^2-a^2}{c^2}
\right) , 
\end{equation}
and 
\begin{equation}
\label{z_parametrization}
    z(t)= \sqrt{\frac{c^2-a^2}{2} \sin{(t}) + \frac{c^2+a^2}{2}},
\end{equation}
where $-\frac{\pi}{2} \leq t \leq \frac{3 \pi}{2}$, $F(u,k)$ is the
elliptic integral of the first kind, and $E(u,k)$ is the elliptic
integral of the second kind which are defined as,
\begin{equation}
    F(u,k) := \int_{0}^{u}  \frac{1}{\sqrt{1- k \sin^2(\theta)}} d \theta,
    \label{EllipF}
\end{equation}and
\begin{equation}
    \label{EllipE}
    E(u,k) :=\int_{0}^{u} \sqrt{1- k \sin^2(\theta)} d \theta.
\end{equation}
\end{lemma}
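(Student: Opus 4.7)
The plan is to verify directly that the parametrization $(x(t),z(t))$ satisfies the ODE characterizing the elliptic catenary that arises from the Euler--Lagrange equation \eqref{Euler-Lagrange} in Lemma \ref{unduloid_arc}, with the mean curvature and integration constant determined by the requirement that $z$ oscillates between $a$ and $c$. I would first pin down $H$ and $C_0$: at the turning points of the profile, $\varphi'=0$ forces $z = Hz^2 + C_0$ at both $z=a$ and $z=c$, and solving this linear system gives $H = 1/(a+c)$ and $C_0 = ac/(a+c)$. Substituting back into \eqref{Euler-Lagrange} and reciprocating, after factoring $(a+c)^2 z^2 - (z^2+ac)^2 = (c^2-z^2)(z^2-a^2)$, one obtains the target ODE
\begin{equation*}
\frac{dx}{dz} = \pm \frac{z^2 + ac}{\sqrt{(c^2-z^2)(z^2-a^2)}}.
\end{equation*}

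Next, I would plug in $z(t)$ from \eqref{z_parametrization} and record the two crucial identities $c^2 - z^2 = \tfrac12 (c^2-a^2)(1-\sin t)$ and $z^2 - a^2 = \tfrac12 (c^2-a^2)(1+\sin t)$, so that $\sqrt{(c^2-z^2)(z^2-a^2)} = \tfrac12 (c^2-a^2)|\cos t|$ and, differentiating $z^2(t)$, $dz/dt = (c^2-a^2)\cos t/(4z)$. For $x(t)$, using the half-angle identity $\sin^2(t/2 - \pi/4) = (1-\sin t)/2$ together with $k = (c^2-a^2)/c^2$, a short computation gives $1 - k\sin^2(t/2 - \pi/4) = z^2/c^2$. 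The derivatives of the elliptic integrals $F$ and $E$ with respect to the amplitude then produce
\begin{equation*}
\frac{dx}{dt} = \frac{a}{2}\cdot \frac{c}{z} + \frac{c}{2}\cdot \frac{z}{c} = \frac{z^2 + ac}{2z}.
\end{equation*}

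Dividing $dx/dt$ by $dz/dt$ yields $dx/dz = 2(z^2+ac)/((c^2-a^2)\cos t)$, which matches the ODE in view of the factorization above, verifying that the parametrization traces out an arc of the elliptic catenary. Finally, I would check the parameter range: at $t=-\pi/2$ and $t=3\pi/2$ one has $\sin t = -1$, so $z=a$ (a minimum of the profile), while at $t=\pi/2$ one gets $z=c$ (the maximum); monotonicity of $z^2$ on $(-\pi/2,\pi/2)$ and $(\pi/2, 3\pi/2)$ confirms that this range covers exactly one full period from one minimum to the next.

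The bulk of the work is a symbolic verification, and the only mildly delicate point is the consistent choice of sign when taking square roots (in particular for $\sqrt{1 - k\sin^2(\cdot)}$ and for $\cos t$ on the two monotone halves of the period). This is handled by observing that the positive determination $\sqrt{1-k\sin^2 u} = z/c$ holds uniformly because $z \geq a > 0$, and that the sign of $dx/dz$ flips at the turning points $t = \pm \pi/2$ in the same way as the sign of $\cos t$, so the single parametrization covers both monotone branches of the undulary arc without case splitting beyond this observation.
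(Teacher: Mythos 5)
Your verification is correct, and it takes a genuinely different route from the paper: the paper states Lemma~\ref{parametrization} with no proof at all, simply attributing the formulas to \cite{hadzhilazova07}, whereas you give a self-contained check. Your argument is sound at each step. Determining $H = 1/(a+c)$ and $C_0 = ac/(a+c)$ from the two turning conditions $z = Hz^2 + C_0$ at $z = a, c$, and the factorization $(a+c)^2 z^2 - (z^2+ac)^2 = (c^2-z^2)(z^2-a^2)$, correctly reduce \eqref{Euler-Lagrange} to $dx/dz = \pm (z^2+ac)/\sqrt{(c^2-z^2)(z^2-a^2)}$. The algebraic identities $c^2 - z^2 = \tfrac12(c^2-a^2)(1-\sin t)$, $z^2 - a^2 = \tfrac12(c^2-a^2)(1+\sin t)$, $\sin^2(t/2 - \pi/4) = (1-\sin t)/2$, and hence $1 - k\sin^2(t/2-\pi/4) = z^2/c^2$ all check out, and together with $\partial_u F = (1-k\sin^2 u)^{-1/2}$, $\partial_u E = (1-k\sin^2 u)^{1/2}$ and the chain-rule factor of $1/2$ they give $dx/dt = (z^2+ac)/(2z)$. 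Dividing by $dz/dt = (c^2-a^2)\cos t/(4z)$ recovers the ODE with the correct sign, and your observation that the sign of $\cos t$ handles both monotone branches simultaneously is exactly what makes the single formula valid over the whole period $[-\pi/2, 3\pi/2]$. Your approach is more informative than the paper's bare citation, since it makes the lemma independently checkable; the trade-off is length, and a minor presentational note is that it is worth remarking explicitly that $dx/dt > 0$ throughout, so $x(t)$ is strictly increasing and the parametrization is a genuine graph over the $x$-axis after inversion, as the profile function $\varphi$ in \eqref{eq:prof} requires.
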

From the parametrization given in Lemma \ref{parametrization} and
Lemmas \ref{unduloid_arc} and \ref{N2bbd} we have the following
result.
\begin{lemma}
\label{One_ArcN2}
There exists a universal constant $\varepsilon_0>0$ such that if
$\varepsilon < \varepsilon_0$ and if $(\Omega,\{x_1, x_2\})$ is a
minimizer to \eqref{En_N2}, then the profile function of the free
surface
$\partial \Omega \setminus (\partial B_{\varepsilon}(x_1) \cup
\partial B_{\varepsilon}(x_2))$ is a graph of a single arc of an
elliptic catenary that has exactly one maximum.
\end{lemma}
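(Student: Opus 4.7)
By Lemmas \ref{rotational} and \ref{unduloid_arc}, the free surface is the graph of a profile function $\varphi$ whose revolution is a single arc of an elliptic catenary, so by Lemma \ref{parametrization} the free surface corresponds to some parameter interval $[t_1,t_2]$ on the curve $(x(t),z(t))$ with minimum radius $a$ and maximum radius $c$. Since the tangency condition \eqref{height} uniquely determines the contact height on each obstacle for given $C_0,H,\varepsilon$, the two endpoints satisfy $z(t_1)=z(t_2)=h$ with $a\le h\le\min(c,\varepsilon)$. The goal is to show that $[t_1,t_2]$ contains exactly one value $t^{\ast}$ with $z(t^{\ast})=c$.

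First I collect the asymptotics as $\varepsilon\to 0$. From the proof of Lemma \ref{unduloid_arc} we already have $H=1+o(1)$ and $c=1+o(1)$, and the classical unduloid identity $H(a+c)=1$ forces $a=o(1)$; together with $a\le h\le\varepsilon$ this yields the sharper bound $a\le\varepsilon$. The modulus $k=(c^2-a^2)/c^2$ then satisfies $1-k=O(\varepsilon^2)$, so the complete elliptic integrals $K(k):=F(\pi/2,k)$ and $E_*(k):=E(\pi/2,k)$ obey $E_*(k)\to 1$ and $K(k)=O(\log(1/\varepsilon))$, whence $aK(k)=O(\varepsilon\log(1/\varepsilon))=o(1)$.

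Next I rule out the case of no maximum. The only critical points of $z$ on the elliptic catenary are alternating maxima at $t=\pi/2+2\pi n$ (with $z=c$) and minima at $t=-\pi/2+2\pi n$ (with $z=a$), and $z(t_1)=z(t_2)$ with $t_1<t_2$ forces $[t_1,t_2]$ to contain at least one such critical point. If it contained only minima, then $\max_{[t_1,t_2]}z=h\le\varepsilon$, so $\varphi\le\varepsilon$ outside the obstacle balls, which combined with Lemma \ref{N2bbd} would give
\begin{align*}
|\Omega|\le\pi\varepsilon^2(2+C\varepsilon^{1/3})+\tfrac{8\pi}{3}\varepsilon^3\ll\tfrac{4\pi}{3},
\end{align*}
contradicting the mass constraint.

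Finally, suppose $[t_1,t_2]$ contains two maxima $\tau_1<\tau_2=\tau_1+2\pi$. From $\sin t_i=(2h^2-c^2-a^2)/(c^2-a^2)\to-1$ the endpoints approach the minima adjacent to $\tau_1$ and $\tau_2$ from outside, i.e.\ $t_1\to\tau_1-\pi$ and $t_2\to\tau_2+\pi$. Computing $x(t_2)-x(t_1)$ from the parametrization in Lemma \ref{parametrization}, this extent decomposes as a min-to-max distance $aK(k)+cE_*(k)\to 1$, plus one full-period distance $2(aK(k)+cE_*(k))\to 2$, plus another max-to-min distance $\to 1$, for a total $x$-extent of at least $4-o(1)$. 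This contradicts the diameter bound $\mathrm{diam}(\Omega)\le 2+C\varepsilon^{1/3}$ from Lemma \ref{N2bbd}, and so at most one maximum can occur.

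The main obstacle is the asymptotic analysis of $K(k)$ near $k=1$: although $K(k)$ diverges logarithmically, the product $aK(k)$ tends to zero thanks to $a\le\varepsilon$, and this is precisely what makes the limiting $x$-distance between consecutive maxima equal exactly $2$, so that the one-maximum profile ($x$-extent $\to 2$) is compatible with, while the two-maximum profile ($x$-extent $\to 4$) contradicts, the diameter bound of Lemma \ref{N2bbd}.
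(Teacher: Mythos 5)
Your proposal is correct and follows essentially the same strategy as the paper's proof: using Lemma \ref{N2bbd} plus the unduloid asymptotics $a\to 0$, $c\to 1$ (so $aK(k)\to 0$) to rule out $M=0$ via the volume constraint and $M\ge 2$ via the diameter bound. The paper writes a single formula bounding $\mathrm{diam}(\Omega)\ge 2M-o(1)$ for general $M$, whereas you argue the case of two consecutive maxima, but this gives the same lower bound $4-o(1)$ and covers all $M\ge 2$.
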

\begin{proof}
  Let $\varphi$ be the profile function defined in
  \eqref{eq:prof}. Then by Lemma \ref{unduloid_arc} the graph of
  $\varphi$ must consist of a single arc of an elliptic
  catenary. Thus, we must show that $\varphi$ has exactly one
  maximum. To do this, define $M$ to be the number of maximum points
  of $\varphi$. First, note that $M$ is nonzero, since if $M=0$ then
  Lemma \ref{N2bbd} implies that we cannot satisfy our volume
  constraint.
  
  Now assume that $M>1$, and let $h$ be the height of contact between
  $\varphi$ and the charges. Using the parametrization given in Lemma
  \ref{parametrization}, we can find a lower bound on the distance
  between the two contact points, which gives us that
\begin{align}
   \mathrm{diam} (\Omega) \geq 2 \left ( a F\left(\frac{t_0}{2}-\frac{\pi}{4}
  , \frac{c^2-a^2}{c^2} \right) + c
  E\left(\frac{t_0}{2}-\frac{\pi}{4} , \frac{c^2-a^2}{c^2} \right)
  \right) \notag \\
  +2(M-1)  \left ( a F\left(\frac{\pi}{2} , \frac{c^2-a^2}{c^2}
  \right) + c  E\left(\frac{\pi}{2}, \frac{c^2-a^2}{c^2} \right)
  \right), 
        \label{D_OmgaN2}
\end{align}
where
\begin{equation}
    t_0= \pi- \arcsin \left(\frac{2h^2-(c^2+a^2)}{c^2-a^2} \right).
\end{equation}
However, as $\varepsilon \xrightarrow[]{} 0$ we have that
$a \xrightarrow[]{} 0$ and $c \xrightarrow[]{} 1$. Thus,
\eqref{D_OmgaN2} implies that
\begin{equation}
  \mathrm{diam} (\Omega) \geq 2 M- o(1),
\end{equation}
and we have that $\varepsilon_0$ can be chosen small enough universal
to provide a contradiction to Lemma \ref{N2bbd}.
\end{proof}
Lastly, we state some expansions for elliptic integrals.

\begin{lemma}
  Let $F(u,k)$ and $E(u,k)$ be the incomplete elliptic integrals of
  the first and second kind as defined in \eqref{EllipF} and
  \eqref{EllipE}, then
\begin{equation}
\label{F_comp}
F \left(\frac{\pi}{2},z  \right)= - \frac{1}{2} \log{(1-z)} + \BigO{1},
\end{equation}

\begin{equation}
\label{E_comp}
E \left(\frac{\pi}{2},z   \right)= 1 +\frac{z-1}{4} \log{(1-z)} + \BigO{(z-1) },
\end{equation}
 \begin{equation}
  \label{F_ex}
    F(\arcsin{(u)},z)= \arctanh{(u)}- \BigO{ \frac{z-1}{u-1}},
\end{equation}
\begin{equation}
\label{E_ex}
    E(\arcsin{(u)},z)=u- \frac{z-1}{2}\arctanh{(u)}+\BigO{z-1, \frac{(z-1)^2}{u-1}},
\end{equation}
\begin{equation}
\label{F_h_big}
 F(\arcsin{(u)},z)= - \frac{1}{2} \log{(1-z)} + \BigO{1, \frac{\sqrt{ 1-u}}{\sqrt{1-z}}},
\end{equation}
and 
\begin{equation}
\label{E_h_big}
 E(\arcsin{(u)},z)=1 + \frac{z-1}{4}  \log{(1-z)}  +
 \BigO{(z-1),\sqrt{(1-u) (1-zu^2)} }. 
\end{equation}
\end{lemma}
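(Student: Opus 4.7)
\end{lemma}

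\begin{proof}
The plan is to prove the six expansions by organizing them into three regimes according to the relative sizes of the two small parameters $1-z$ and $1-u$, and extracting the leading singular behaviour in each.

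For the complete integrals \eqref{F_comp}--\eqref{E_comp}, I would substitute $\phi = \pi/2 - \theta$ to rewrite $1 - z \sin^2\theta = (1-z) + z\sin^2\phi$. Splitting the integral at some fixed small $\delta$ and using $\sin^2\phi = \phi^2 + \BigO{\phi^4}$ on $[0,\delta]$, the contribution of $[0,\delta]$ is
\[
\int_0^\delta \frac{d\phi}{\sqrt{(1-z)+\phi^2}} + \BigO{1} = \sinh^{-1}\!\left(\tfrac{\delta}{\sqrt{1-z}}\right) + \BigO{1} = -\tfrac{1}{2}\log(1-z) + \BigO{1},
\]
while the contribution from $[\delta,\pi/2]$ is bounded, giving \eqref{F_comp}. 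For \eqref{E_comp} I would first rationalize to obtain
\[
E(\pi/2, z) - 1 = (1-z)\int_0^{\pi/2} \frac{\sin^2\theta \, d\theta}{\sqrt{1-z\sin^2\theta}+\cos\theta},
\]
and apply the same splitting: the logarithmic divergence from $\theta$ near $\pi/2$, multiplied by $(1-z)$, gives the announced $\tfrac{z-1}{4}\log(1-z)$.

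For the intermediate regime \eqref{F_ex}--\eqref{E_ex} with $u$ bounded away from $1$, I would pass to $t = \sin\theta$ and rationalize $\sqrt{1-t^2}-\sqrt{1-zt^2}$ to obtain
\[
F(\arcsin u, z) - \arctanh u = (z-1) \int_0^u \frac{t^2 \, dt}{(1-t^2)\sqrt{1-zt^2}\bigl(\sqrt{1-t^2}+\sqrt{1-zt^2}\bigr)}.
\]
A crude pointwise bound on the integrand together with $\int_0^u (1-t^2)^{-3/2}\, dt = u/\sqrt{1-u^2}$ yields the error $\BigO{(1-z)/(1-u)}$. For $E(\arcsin u, z)$, Taylor-expand $\sqrt{1-zt^2} = \sqrt{1-t^2} - \tfrac{(1-z)t^2}{2\sqrt{1-t^2}} + (\text{remainder})$ and integrate termwise; the leading correction gives $-\tfrac{z-1}{2}\arctanh u$ and the remainder is controlled by the same rationalization trick.

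Finally, for \eqref{F_h_big}--\eqref{E_h_big}, where $u$ is allowed to approach $1$, I would write
\[
F(\arcsin u, z) = F(\pi/2, z) - \int_0^{\arccos u} \frac{d\phi}{\sqrt{(1-z)+z\sin^2\phi}},
\]
apply the first-regime estimate to $F(\pi/2,z)$, and reduce the second integral (via $\sin\phi\approx\phi$) to $\sinh^{-1}(\arccos u/\sqrt{1-z})$; combined with $\arccos u \sim \sqrt{2(1-u)}$ and $\sinh^{-1}(x) = \BigO{1+x}$ this yields the error $\BigO{1,\sqrt{1-u}/\sqrt{1-z}}$. The analog for $E(\arcsin u, z)$ is similar; the awkward residual $\sqrt{(1-u)(1-zu^2)}$ in \eqref{E_h_big} arises naturally from expanding $\sqrt{1-zt^2}$ about $t=u$ (rather than about $z=1$), where $1-zu^2 = (1-z) + z(1-u^2)$ carries the joint contribution of both small parameters. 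The principal obstacle is precisely this last regime: careful bookkeeping of both $1-u$ and $1-z$ and their ratio is required to identify the correct mixed error term and to show it is uniform. Everything else is routine once the regime decomposition and the rationalization identity are in place.
\end{proof}
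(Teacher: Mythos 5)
The paper states this lemma without proof; these are asymptotics for the incomplete elliptic integrals near the logarithmic singularity at $z\to 1^-$, and the authors take them as given, so there is no argument in the source for you to match. Your three-regime decomposition plus rationalization is a sound route to all six estimates, and the two identities you isolate --- $F(\arcsin u,z)=F(\pi/2,z)-\int_0^{\arccos u}\bigl((1-z)+z\sin^2\phi\bigr)^{-1/2}d\phi$ and the rationalized form of $F(\arcsin u,z)-\arctanh u$ --- are exactly the right tools. However, a few intermediate claims are off and would produce wrong constants or exponents if executed literally. For \eqref{F_ex} you invoke $\int_0^u(1-t^2)^{-3/2}\,dt$, but in the relevant regime $1-z\ll 1-u$ each of the three denominator factors in your rationalized integrand is comparable to $\sqrt{1-t^2}$, so the integrand is of order $(1-t^2)^{-2}$ near $t=1$, and it is $\int_0^u(1-t^2)^{-2}\,dt=\BigO{(1-u)^{-1}}$ that yields the stated $\BigO{(z-1)/(u-1)}$. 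For \eqref{E_ex} the Taylor term carries the wrong sign: $\sqrt{1-zt^2}=\sqrt{1-t^2}+\tfrac{(1-z)t^2}{2\sqrt{1-t^2}}+\cdots$, after which $\tfrac{1-z}{2}\int_0^u\tfrac{t^2\,dt}{1-t^2}=\tfrac{1-z}{2}(\arctanh u-u)$ gives the claimed $-\tfrac{z-1}{2}\arctanh u$ modulo an $\BigO{z-1}$ term. And for \eqref{E_comp} the factor $\tfrac14$ (rather than $\tfrac12$) is not ``the same splitting'': it requires rationalizing the denominator $\sqrt{(1-z)+\phi^2}+\phi$ once more, after which the $\sinh^{-1}$ in the antiderivative carries a prefactor $\tfrac12(1-z)$, halving the naive logarithm before the overall $(1-z)$ is stripped off. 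Your sketch elides this cancellation. None of these slips changes the final formulas, but the details do not close as written.
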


Here and below we are using the notation $\BigO{a, b}$, etc., to
denote the quantities that are bounded by universal multiples of
$\max( |a|, |b| )$ for $|a|, |b| \ll 1$.


\subsection{Classification of cases}

Let $(\Omega^\ast,X) \in \mathcal A_{\frac{4 \pi}{3},2,\varepsilon}$
be a minimizer to \eqref{En_N2}, then from Lemma
\ref{One_ArcN2}, we have that $\Omega^\ast$ falls into one of three cases. In case 1, we have that the free
surface of $\Omega^\ast$ has the profile function whose graph is an
arc of an elliptic catenary that does not attain its minimum value
$a$. In case 2, we have that this elliptic catenary arc attains its
minimum value $a$ at exactly one point. Lastly, in case 3, this
elliptic catenary arc obtains its minimum value $a$ at exactly two
points. These three alternatives are illustrated in Figure
\ref{N2_cases_plot}.  Thus, from here on we only need to consider test
configurations
$(\Omega,X) \in \mathcal A_{\frac{4 \pi}{3},2,\varepsilon}$, which
fall into one of the three cases defined above. Furthermore, without
loss of generality we define the maximum of the profile function to be
located at $(0,c)$, which is consistent with the parametrization given
in Lemma
\ref{parametrization} and fixes translations along the $x$-axis.
\par 

In case 1, the unduloid arc joining the two charges does not attain
its minimum and the minimizer is symmetric about the $z$-axis. In this
case, we have that our minimizer is of the form
$\left (\Omega^\ast, \{ (-\frac{L}{2},0,0),(\frac{L}{2},0,0) \}
\right)$, where $L$ is the distance between the charges. For a case
1 test configuration we have the following lemma, which implicitly
expresses the energy as a function of the contact height $h$ (see Figure
\ref{schematic_case1} for a schematic of a case 1 test
configuration).

\begin{figure}
    \centering
 
\includegraphics[width=15cm]{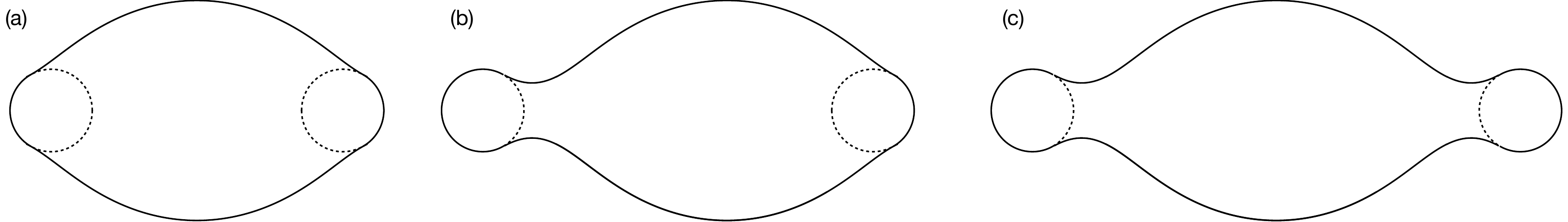}

\caption{Three possibilities of the energy minimizing candidates for
  $N = 2$: (a) case 1, (b) case 2, (c) case 3.}
 \label{N2_cases_plot}

\end{figure}

\begin{lemma}
\label{case1}
Let $(\Omega, X) \in \mathcal A_{\frac{4 \pi}{3},2,\varepsilon}$ be a
case 1 test configuration with contact height $h$. Then
\begin{align}
  E_{\varepsilon} \left(\Omega, X \right)=
  \hat{E_\varepsilon}(h) =4 \pi \left ( (a+c) c E \left
  (\frac{t_0}{2}- \frac{\pi}{4},   \frac{c^2-a^2}{c^2} \right) +
  \varepsilon \left(\varepsilon - \sqrt{\varepsilon^2-h^2} \right)
  \right) \notag \\
  +\frac{\gamma \varepsilon^3}{2 \left(a
  F\left(\frac{t_0}{2}-\frac{\pi}{4} , \frac{c^2-a^2}{c^2} \right) + c
  E\left(\frac{t_0}{2}-\frac{\pi}{4} , \frac{c^2-a^2}{c^2} \right)  -
  \sqrt{\varepsilon^2-h^2}\right)}, 
            \label{Energy_fun}
\end{align}
where
\begin{equation}
\label{a_equation1}
    a= \frac{c h^2- \varepsilon h^2}{c \varepsilon -h^2},
\end{equation}
\begin{equation}
  \label{t0_equation1}
     t_0= \pi- \arcsin \left (\frac{2h^2-(c^2+a^2)}{c^2-a^2} \right),
\end{equation}
and $c$ is given implicitly by 
\begin{align}
  2=  2 \varepsilon^3-\sqrt{\varepsilon^2-h^2}(2 \varepsilon^2+h^2)+
  \frac{h(c^2-a^2)}{2}\sqrt{1-\left(\frac{2 h^2-c^2-a^2}{c^2-a^2}
  \right)^2}- \notag \\ 
  a^2 c F \left( \frac{t_0}{2} - \frac{\pi}{4},\frac{c^2-a^2}{c^2}
  \right)+ (2c(a^2+c^2)+3ac^2) E \left( \frac{t_0}{2} -
  \frac{\pi}{4},\frac{c^2-a^2}{c^2} \right). 
    \label{V}
\end{align}
\end{lemma}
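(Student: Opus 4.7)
The plan is to exploit the rotational symmetry together with the explicit unduloid parametrization of Lemma~\ref{parametrization} to express each piece of the energy and the volume in closed form in terms of elliptic integrals. By Lemma~\ref{rotational} and Lemma~\ref{One_ArcN2}, in Case~1 the set $\Omega$ is rotationally symmetric about the axis through the two charges and its free surface is the surface of revolution of a single elliptic catenary arc with a unique maximum at height $c$. After translating so that this maximum sits at $(x,z) = (0,c)$, the arc can be written as $(x(t), z(t))$ for $t$ in an interval symmetric about $t = \pi/2$. Solving $z(t_0) = h$ via \eqref{z_parametrization} and choosing the branch $t_0 \in (\pi/2, 3\pi/2)$ that corresponds to the right contact point yields \eqref{t0_equation1} directly.

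Equation \eqref{a_equation1} would come from the Euler--Lagrange equation and the tangency condition. Evaluating \eqref{Euler-Lagrange} at the two critical heights $z = a$ and $z = c$ shows that $a$ and $c$ are the two roots of $Hr^2 - r + C_0 = 0$, so $H = 1/(a+c)$ and $C_0 = ac/(a+c)$. Substituting these into \eqref{height} with the plus sign, corresponding to the unduloid branch kept in the proof of Lemma~\ref{unduloid_arc}, and solving for $a$ gives \eqref{a_equation1}. The distance $L$ between the charges follows from the same $C^{1,1}$ tangency: matching the tangent directions of the free surface and of $\partial B_\varepsilon(x_2)$ at the right contact point gives $|\varphi'(x(t_0))| = \sqrt{\varepsilon^2 - h^2}/h$, which places the ball center along the inward normal at horizontal offset $\sqrt{\varepsilon^2 - h^2}$ toward the origin; hence $x_2 = x(t_0) - \sqrt{\varepsilon^2 - h^2}$ and $L = 2 x_2$.

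For the perimeter I would first establish the elementary identity $x'(t)^2 + z'(t)^2 = (a+c)^2/4$. Setting $s = t/2 - \pi/4$, one computes $z^2 = c^2\cos^2 s + a^2\sin^2 s$, $x' = (ac + z^2)/(2z)$ and $z' = -(c^2-a^2)\sin s \cos s/(2z)$, and the identity reduces to the algebraic factorization
\begin{equation*}
(ac + z^2)^2 + (c^2 - a^2)^2 \sin^2 s \cos^2 s = (a+c)^2 z^2.
\end{equation*}
Consequently the unduloid area equals $(a+c)\pi \int_{\pi - t_0}^{t_0} z\,dt$, which, via the substitution $s = t/2 - \pi/4$ and the identity $\sqrt{1 - k^2 \sin^2 s} = z/c$ with $k^2 = (c^2-a^2)/c^2$, evaluates to $4\pi c(a+c) E(s_0, k^2)$, where $s_0 = t_0/2 - \pi/4$. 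Adding the two spherical caps of height $\varepsilon - \sqrt{\varepsilon^2 - h^2}$ and appending the Coulombic term $\gamma\varepsilon^3/L$ produces \eqref{Energy_fun}.

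The main obstacle is the volume computation leading to \eqref{V}. Writing $|\Omega| = 2\pi \int_0^{x(t_0)} z^2\,dx + 2 V_{\mathrm{cap}}$ with $V_{\mathrm{cap}} = \pi\big(2\varepsilon^3 - (2\varepsilon^2 + h^2)\sqrt{\varepsilon^2 - h^2}\big)/3$, and changing variables via $dx = (ac + z^2)/(2z)\,dt$, the unduloid part reduces to $\pi \int_{\pi/2}^{t_0} (acz + z^3)\,dt$. The linear term is another $E$-integral, but the $z^3 = z(c^2\cos^2 s + a^2\sin^2 s)$ integral is not itself standard: reducing it to elliptic integrals requires an integration by parts in $s$ against $\sin s\cos s = \tfrac12 (d/ds)\sin^2 s$, which generates the boundary term $\tfrac12 h(c^2 - a^2)\sqrt{1 - \sin^2 t_0}$ at $s = s_0$ and leaves residual integrals of $(\sin^2 s)/\sqrt{1 - k^2\sin^2 s}$ and $\sqrt{1 - k^2\sin^2 s}$, both expressible through $F(s_0, k^2)$ and $E(s_0, k^2)$. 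Imposing $|\Omega| = 4\pi/3$ and multiplying by $3/\pi$ then yields \eqref{V}. The delicate point is tracking the cancellations that collapse $\int(c^2\cos^2 s + a^2\sin^2 s)^{3/2}\,ds$, combined with the cap contribution, onto precisely the coefficients $-a^2 c$ in front of $F$ and $2c(a^2 + c^2) + 3ac^2$ in front of $E$ stated in \eqref{V}.
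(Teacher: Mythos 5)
Your argument is correct and follows the same overall strategy as the paper's proof: exploit the rotational symmetry and the unduloid parametrization of Lemma~\ref{parametrization}, use the tangency with $\partial B_\varepsilon(x_i)$ to determine $a$, $t_0$ and the separation $L$, and impose the volume constraint. The only substantive difference is that the paper invokes \cite{hadzhilazova07} for the closed-form surface area and volume of an unduloid section, whereas you derive these formulas directly. Your derivation is sound: the arclength identity $(x')^2 + (z')^2 = (a+c)^2/4$, the observation $z^3 = c^3(1-k^2\sin^2 s)^{3/2}$ with $k^2 = (c^2-a^2)/c^2$, and the reduction of $\int_0^{s_0}(1-k^2\sin^2 s)^{3/2}\,ds$ to $E$, $F$ and a boundary term all check out and reproduce exactly the coefficients $-a^2 c$ and $2c(a^2+c^2)+3ac^2$ in \eqref{V}, with the boundary term $\tfrac12 h(c^2-a^2)|\cos t_0|$ matching the square-root factor there via $\sin t_0 = (2h^2-c^2-a^2)/(c^2-a^2)$. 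Your tangency computation (unit normal passes through the axis at distance $\varepsilon$, giving $|\varphi'| = \sqrt{\varepsilon^2-h^2}/h$ and the horizontal offset $\sqrt{\varepsilon^2-h^2}$) is the same geometric fact encoded in \eqref{height}, and substituting $H = 1/(a+c)$, $C_0 = ac/(a+c)$ into \eqref{height} with the plus sign gives \eqref{a_equation1} as you state. So the two proofs are essentially the same in structure; yours is more self-contained at the cost of carrying out the elliptic-integral reductions explicitly.
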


\begin{figure}
    \centering
     
\includegraphics[width=8cm]{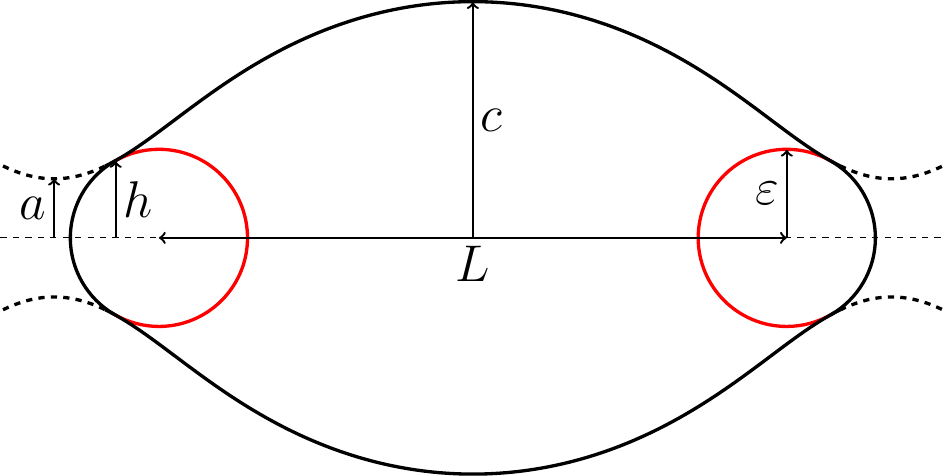}
\caption{The schematics of the cross-section of the case 1 candidate
  for a minimizer.}
\label{schematic_case1}
\end{figure}

\begin{proof}
  Let the unduloid section that joins the two charges have minimum
  height $a$ and maximum height $c$. Since the charges contact the
  unduloid section at height $h$, we can use the parameterization
  given in \eqref{z_parametrization} to find that the contact between
  the unduloid and the right charge happens when $t = t_0$, with $t_0$
  defined in \eqref{t0_equation1}.  Then from
  \eqref{x_parametrization} we obtain that
\begin{equation}
\label{L}
L = 2 \left ( a F\left(\frac{t_0}{2}-\frac{\pi}{4} ,
    \frac{c^2-a^2}{c^2} \right) + c
  E\left(\frac{t_0}{2}-\frac{\pi}{4} , \frac{c^2-a^2}{c^2} \right) -
  \sqrt{\varepsilon^2-h^2}\right), 
\end{equation}
where $L$ is the distance between the charges.  Now, since our unduloid
has mean curvature $H= \frac{1}{a+c} $, from \eqref{Euler-Lagrange} we
have that each monotone arc of our elliptic catenary is given by the
equation,
\begin{equation}
       \varphi' = \pm \sqrt{\frac{ (a+c)^2  \varphi^2} {(
           \varphi^2+ac )^2}-1}. 
\end{equation}
Thus, our tangency condition between the charges and the elliptic
catenary implies \eqref{a_equation1}. In addition, calculating the
volume of the unduloid section, which is given in
\cite{hadzhilazova07}, and accounting for the volume of the excess
charges gives \eqref{V}. Finally, \eqref{L} explains the interaction
energy given in \eqref{Energy_fun}, and the perimeter term is derived
directly from accounting for the surface area of the unduloid section
(given in \cite{hadzhilazova07}) and the surface area over the
charges.
\end{proof}

We now proceed to cases 2 and 3.

\begin{lemma}
\label{case2}
Let $(\Omega, X) \in \mathcal A_{\frac{4 \pi}{3},2,\varepsilon}$ be a
case 2 test configuration with contact height $h$. Then
\begin{align}
  E_{\varepsilon} \left(\Omega, X \right)= \hat{E_\varepsilon}(h) =4
  \pi c(a+c) E \left( \frac{\pi}{2},{\frac{c^2-a^2}{c^2}} \right ) + 
  4 \pi \varepsilon^2 \notag \\
  +\lambda \varepsilon^3  \left ( 2a F \left(
  \frac{\pi}{2},\frac{c^2-a^2}{c^2} \right ) + 2 c E \left
  (\frac{\pi}{2}, \frac{c^2-a^2}{c^2} \right)  \right)^{-1}  , 
            \label{Energy_fun_case2}
\end{align}
where $a$ is given by \eqref{a_equation1}
and $c$ is given implicitly by
\begin{align}
  2 = ( 2(a^2+c^2)c+3a c^2) E \left(\frac{\pi }{2},
  {\frac{c^2-a^2}{c^2}}  \right)  -a^2 c F \left(\frac{\pi
  }{2},{\frac{c^2-a^2}{c^2}} \right) + 2 \varepsilon^3 . 
    \label{V_case2}
\end{align}
\end{lemma}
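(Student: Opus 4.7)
The plan is to proceed exactly in parallel with the proof of Lemma \ref{case1}, modifying only those pieces that reflect the distinct geometric configuration of case 2. By Lemma \ref{unduloid_arc} the free surface of $\Omega$ is a section of a single unduloid, and by Lemma \ref{rotational} $\Omega$ is axisymmetric about the line joining the two charges; combining the left--right reflective symmetry of the energy with the uniqueness asserted in Theorem \ref{t:2}, $\Omega$ is also symmetric under reflection about the midplane separating $x_1$ and $x_2$. Hence in case 2 the single interior point at which the free-surface arc attains the minimum value $a$ of the elliptic catenary lies on this midplane. The tangency condition \eqref{height} at each contact with the charge balls is derived exactly as in Lemma \ref{case1} and yields the same relation \eqref{a_equation1} between $a$, $c$, $h$, and $\varepsilon$.

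The first substantive step is to identify the portion of the parameter interval of Lemma \ref{parametrization} occupied by the free-surface arc in case 2. Whereas in case 1 the relevant range is a proper subinterval of one half-period containing a single maximum of the elliptic catenary, in case 2 it covers one complete period, with the contact points with the two charge balls sitting at symmetric values of $t$ matched through the condition that the catenary attains the height $h$ there. The effect of this change is that the incomplete elliptic integrals $F(t_0/2-\pi/4, k)$ and $E(t_0/2-\pi/4, k)$ of Lemma \ref{case1} are replaced by the complete elliptic integrals $F(\pi/2, k)$ and $E(\pi/2, k)$ with $k = (c^2-a^2)/c^2$ that appear in \eqref{Energy_fun_case2} and \eqref{V_case2}.

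Once the parameter range is fixed, I would evaluate in turn each of the three quantities entering the energy and the volume constraint. The distance $|x_2 - x_1|$, read off from \eqref{x_parametrization} at the endpoints of the parameter interval, evaluates to $2aF(\pi/2, k) + 2cE(\pi/2, k)$, which gives the Coulombic term in \eqref{Energy_fun_case2}. The perimeter decomposes as the unduloid surface area over one full period plus the exposed portion of the two charge spheres: by the Delaunay formulas of \cite{hadzhilazova07} the former equals $4\pi c(a+c)E(\pi/2, k)$, while in the case 2 configuration the latter computes to $4\pi \varepsilon^2$ by direct integration of the spherical cap areas on each ball. The enclosed volume admits an analogous decomposition, and combining the Delaunay formula for the unduloid volume over one period with the constraint $|\Omega| = 4\pi/3$ produces \eqref{V_case2}.

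The principal obstacle is the careful geometric identification of case 2, i.e., verifying that the free-surface arc indeed covers exactly one complete period of the unduloid as described above and that the contact of the catenary with each obstacle $\partial B_\varepsilon(x_i)$ occurs at the symmetric values of $t$ consistent with the unique interior attainment of $a$. Once this is in place, the remaining steps are routine manipulations of the elliptic-integral parametrization of Lemma \ref{parametrization} and standard Delaunay formulas from \cite{hadzhilazova07}, carried out in direct analogy with the computations in the proof of Lemma \ref{case1}.
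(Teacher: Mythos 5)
The tangency condition and the replacement of the incomplete elliptic integrals by complete ones are correct, and the final formulas you reach agree with the paper; but the geometric picture you use to get there is wrong, and the reasoning you give for it would, if carried out consistently, contradict the lemma itself.

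You assert that in case 2 the configuration is symmetric about the midplane between the two charges, placing the single interior minimum of the profile on that midplane, and you support this by appealing to the uniqueness in Theorem \ref{t:2}. This is circular: Theorem \ref{t:2} is proved only after Lemmas \ref{case1}--\ref{case3} (via Propositions \ref{Case2_non}, \ref{Case3_non} and Theorem \ref{Theorem_main2}), and Lemma \ref{case2} is a statement about all case 2 test configurations, not just minimizers, so there is no uniqueness to invoke. More importantly, the symmetry claim is geometrically impossible. By Lemma \ref{One_ArcN2} the profile of a case 2 configuration has exactly one maximum, and by definition of case 2 it attains the minimum $a$ exactly once. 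A midplane-symmetric profile with the minimum at the midplane would be of the form $h \to c \to a \to c \to h$, which has two maxima; placing the maximum at the midplane gives $h\to a\to c\to a\to h$, which has two minima (that is case 3); and $h\to a\to h$ never reaches a height near $c\approx 1$, violating the volume constraint. Hence no symmetric case 2 configuration exists.

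The actual case 2 picture is asymmetric: the catenary arc spans exactly one full period, passing through one minimum and one maximum, with the unduloid tangent to the \emph{inner} side of one charge ball and the \emph{outer} side of the other. This is what produces the formulas: the period length $2aF(\pi/2,k)+2cE(\pi/2,k)$ equals $|x_2-x_1|$ because the two axial offsets $\pm\sqrt{\varepsilon^2-h^2}$ to the ball centers cancel (in case 1 they add, giving the $-2\sqrt{\varepsilon^2-h^2}$ correction in \eqref{L}); and the exposed spherical caps have heights $\varepsilon-\sqrt{\varepsilon^2-h^2}$ and $\varepsilon+\sqrt{\varepsilon^2-h^2}$, whose areas sum to $4\pi\varepsilon^2$. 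Your symmetric picture would instead force two equal caps, whose combined area is $4\pi\varepsilon^2$ only when $h=\varepsilon$, and would also fail to account for the absence of the $\sqrt{\varepsilon^2-h^2}$ correction in the distance. So while you land on the right expressions, the derivation as written does not support them and the key geometric identification of case 2 is missing.
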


\begin{proof}
  Note that the equation for $a$ comes from the tangency condition, as
  in Lemma \ref{case1}. Furthermore, \eqref{Energy_fun_case2} and
  \eqref{V_case2} follow directly from \cite{hadzhilazova07}.
\end{proof}

\begin{lemma}
\label{case3}
Let $(\Omega, X) \in \mathcal A_{\frac{4 \pi}{3},2,\varepsilon}$ be a
case 3 test configuration with contact height $h$. Then
\begin{equation}
    \begin{aligned}
      E_{\varepsilon} \left(\Omega,X \right)= \hat{E_\varepsilon}(h)
      =8 \pi c(a+c) E \left ( \frac{\pi}{2},{\frac{c^2-a^2}{c^2}}
      \right ) \\ - 4 \pi c(a+c) E \left ( \frac{t_0}{2}-
        \frac{\pi}{4}, \frac{c^2-a^2}{c^2} \right ) +
      4 \pi \varepsilon (\varepsilon +\sqrt{\varepsilon ^2-h^2}) \\
      +\lambda \varepsilon^3 \left ( 2 \bigg (\sqrt{\varepsilon^2-h^2}
        +2a F \bigg( \frac{\pi}{2},\frac{c^2-a^2}{c^2} \bigg ) + 2 c E
        \bigg (\frac{\pi}{2}, \frac{c^2-a^2}{c^2} \bigg) - \right. \\
      \left. a F \bigg(\frac{t_0}{2}- \frac{\pi}{4} ,
        {\frac{c^2-a^2}{c^2}} \bigg) -c E \bigg( \frac{t_0}{2}-
        \frac{\pi}{4}, {\frac{c^2-a^2}{c^2}}\bigg) \bigg )
      \right)^{-1},
            \label{Energy_fun_case3}
\end{aligned}
\end{equation}
where the equations for $a$ and $t_0$ are given by \eqref{a_equation1}
and \eqref{t0_equation1}, respectively, and $c$ is given implicitly by
\begin{equation}
    \begin{aligned}
 2 = ( 2(a^2+c^2)c+3a c^2) \left (2 E \left(
     {\frac{\pi}{2}},{\frac{c^2-a^2}{c^2}} \right) - E \left(
     \frac{t_0}{2} - \frac{\pi }{4}, {\frac{c^2-a^2}{c^2}}  \right)
 \right) \\ -a^2 c \left (2 F \left(
     {\frac{\pi}{2}},{\frac{c^2-a^2}{c^2}} \right) -F
   \left(\frac{t_0}{2} - \frac{\pi }{4},{\frac{c^2-a^2}{c^2}} \right)
 \right) \\-\frac{(c^2-a^2)h}{2} \sqrt{1-\left(\frac{2
       h^2-c^2-a^2}{c^2-a^2} \right)^2} +2
 \varepsilon^3+\sqrt{\varepsilon^2-h^2}(2 \varepsilon^2+h^2) . 
 \label{V_case3}
\end{aligned}
\end{equation}

\end{lemma}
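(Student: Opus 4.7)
My plan is to parallel the derivation of Lemma \ref{case2}, extending it to accommodate the more elaborate case 3 geometry. By Lemma \ref{One_ArcN2}, the free surface is parametrized by a single elliptic catenary arc with exactly one maximum; what distinguishes case 3 is that this arc attains its minimum value $a$ at two distinct points. Placing the maximum at $(0, c)$, corresponding to $t = \pi/2$ in the parametrization of Lemma \ref{parametrization}, the arc therefore spans a full geometric period from the left minimum at $t = -\pi/2$ through the maximum to the right minimum at $t = 3\pi/2$, plus two symmetric ``tail'' pieces extending from each minimum outward until each meets the respective charge sphere tangentially at height $h$. The tangency condition between the unduloid and each sphere $\partial B_\eps(x_i)$ reproduces \eqref{a_equation1} exactly as in Lemma \ref{case1}, and the $t$-value at which the arc descending from the maximum first reaches height $h$ is given by \eqref{t0_equation1}.

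The core step is to evaluate the surface area, $x$-length, and volume of each tail. By the reflection symmetry of the elliptic catenary about its minima, the right tail (from $t = 3\pi/2$ out to $t_R = 2\pi + \beta$, where $\beta = \arcsin((2h^2 - c^2 - a^2)/(c^2-a^2))$) is congruent, as a rotationally generated surface, to the segment from $t = t_0$ to $t = 3\pi/2$ inside the central bulge, which descends from $h$ down to $a$. Invoking the closed-form formulas of \cite{hadzhilazova07} for the surface area, $x$-length, and volume of an unduloid segment in terms of the incomplete elliptic integrals $F$ and $E$, each tail contributes a half-bulge quantity minus the analogous quantity evaluated at the argument $t_0/2 - \pi/4$. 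Thus the surface area of each tail is $2\pi c(a+c)\bigl[E(\pi/2, (c^2-a^2)/c^2) - E(t_0/2 - \pi/4, (c^2-a^2)/c^2)\bigr]$, with the $x$-length given by the same expression with $aF + cE$ in place of $c(a+c) E$; the tail volume is a parallel difference involving the combination $(2(a^2+c^2)c + 3ac^2) E - a^2 c F$.

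Summing the central bulge and twice each tail then gives the unduloid surface area $8\pi c(a+c) E(\pi/2, (c^2-a^2)/c^2) - 4\pi c(a+c) E(t_0/2 - \pi/4, (c^2-a^2)/c^2)$ in \eqref{Energy_fun_case3}, to which the two exposed spherical caps contribute $4\pi \eps(\eps + \sqrt{\eps^2 - h^2})$, since each contact circle sits at height $h$ and the visible cap extends to the far pole. The distance $|x_2 - x_1|$ equals the total $x$-length of the unduloid arc plus $2\sqrt{\eps^2 - h^2}$, the two inward offsets of the contacts from the sphere centers, and a short simplification identifies this with the denominator of the Coulombic term. The volume constraint \eqref{V_case3} assembles similarly: the unduloid volume produces the combinations $2E(\pi/2, \cdot) - E(t_0/2 - \pi/4, \cdot)$ and $2F(\pi/2, \cdot) - F(t_0/2 - \pi/4, \cdot)$, the spherical caps contribute $2\eps^3 + \sqrt{\eps^2 - h^2}(2\eps^2 + h^2)$, and the boundary-disk correction $-\tfrac{(c^2-a^2)h}{2}\sqrt{1 - ((2h^2 - c^2 - a^2)/(c^2-a^2))^2}$ arises from the truncation of the catenary at height $h$. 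The principal obstacle is bookkeeping rather than any single analytic estimate: because the arc passes through both minima and returns up to $h$, each $E$ and $F$ contribution must be correctly attributed to a specific segment, and the symmetries about the minima must be invoked to rewrite the outer tail integrals in terms of the reference argument $t_0/2 - \pi/4$ introduced in Lemma \ref{case1}. Once this decomposition is fixed, \eqref{Energy_fun_case3} and \eqref{V_case3} follow directly from the closed-form formulas of \cite{hadzhilazova07} with no further analytic input.
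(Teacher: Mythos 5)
Your proposal is correct and follows essentially the same route as the paper: identify the case~3 arc as a full min-to-min period plus two symmetric tail segments, apply the tangency relation \eqref{a_equation1} unchanged, and assemble the area, length, and volume from the closed-form unduloid formulas of \cite{hadzhilazova07}. The paper's own proof is a two-sentence citation to \cite{hadzhilazova07} and to the fact that the tangency condition is unchanged, so your write-up supplies the same bookkeeping the paper leaves implicit (including the reflection symmetry used to recast the tail integrals in terms of the argument $t_0/2-\pi/4$, the large-cap formula $4\pi\eps(\eps+\sqrt{\eps^2-h^2})$, and the cap volume simplification), all of which I checked and found accurate.
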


\begin{proof}
  Note that for case 3 candidates our tangency condition between
  the charges and the elliptic catenary remains unchanged.
  Furthermore, the distance between charges and the volume and surface
  area of the unduloid section follow directly from
  \cite{hadzhilazova07}.
\end{proof}

In the remainder of the paper we carry out an asymptotic expansion of
the energy for the three cases considered above and characterize the
global energy minimizers for $\eps$ small.

\begin{proposition}
\label{Theorem_case1}
There exists a universal $\varepsilon_0>0$ such that if
$\eps < \eps_0$ and
$(\Omega, X)\in \mathcal A_{\frac{4 \pi}{3},2,\varepsilon}$ is a case
1 test configuration with contact height $h \leq \varepsilon$ then
 \begin{equation}
 \label{E_case1_hb}
 E_{\varepsilon} \left(\Omega,X \right)> 4 \pi-\frac{2 \pi h^4
 }{\varepsilon^2}\log{ \left( \frac{h^2}{ \varepsilon} \right) }+
 \BigO{\varepsilon^2} , 
\end{equation}
whenever $h> \frac{\varepsilon}{2}$. Furthermore, we have
\begin{equation}
\label{Case_one_exE}
E_{\varepsilon} \left(\Omega,X\right)=4 \pi + \frac{\gamma
  \varepsilon^3}{2}(1+\varepsilon+ \varepsilon^2)- \left (\frac{2\pi
    h^4}{\varepsilon^2}-\frac{\gamma h^2 \varepsilon^2}{2} \right)
\log{( h)}+\BigO{\gamma \varepsilon^6,(\gamma+1) h^2 \varepsilon^2,
  \frac{h^4}{\varepsilon^2}}, 
\end{equation}
whenever $h \leq \frac{\varepsilon}{2}$.

\end{proposition}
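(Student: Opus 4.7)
The plan is to analyze the explicit energy expression given by Lemma \ref{case1} using the elliptic-integral expansions \eqref{F_comp}--\eqref{E_h_big}. The scheme is the same in both regimes: determine $a$ via the tangency relation \eqref{a_equation1}, extract $c$ from the volume constraint \eqref{V}, and expand the perimeter and Coulomb terms in \eqref{Energy_fun} to the required order. Only the appropriate choice of elliptic-integral expansion will differ between $h > \eps/2$ and $h \leq \eps/2$.

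As a preliminary step, I will invoke Lemma \ref{N2bbd} to ensure $c = 1 + o(1)$, and deduce from \eqref{a_equation1} the scaling $a = (h^2/\eps)(1 + O(h+\eps))$ uniformly for $h \in (0,\eps]$; in particular $a \leq h \leq \eps$. Setting $z := (c^2-a^2)/c^2$ (the elliptic modulus) and $u := \sin(t_0/2 - \pi/4) = \sqrt{(c^2-h^2)/(c^2-a^2)}$ via \eqref{t0_equation1}, a direct computation gives $\sqrt{1-zu^2} = h/c$, from which one reads off $1-z \asymp a^2 \asymp h^4/\eps^2$ and $1-u^2 \asymp h^2 - a^2$. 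All further asymptotic work reduces to plugging these scalings into the listed expansions.

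In the regime $h \leq \eps/2$, one has $a^2 \leq h^2/4$, so $u$ stays bounded away from $1$ and the ``away-from-1'' expansions \eqref{F_ex}--\eqref{E_ex} apply, yielding $F(t_0/2-\pi/4, z) = \arctanh(u) + O(h^2/\eps^2)$ and $E(t_0/2-\pi/4,z) = u + O(h^4 |\log h|/\eps^2)$, where $\arctanh(u) = -\log h + O(1)$. Substituting into the volume constraint \eqref{V}, the leading identity becomes $2 = 2c^3 + (\text{explicit corrections})$, which one inverts to get $c = 1 + O(h^2 + \eps^3)$ with quantitative remainders. Feeding this back into \eqref{Energy_fun}, the perimeter expands as $4\pi c^2 + $ (corrections), the Coulomb denominator $L = 2(aF + cE - \sqrt{\eps^2-h^2})$ expands as $L = 2 - 2\eps - 2\eps^2 + O(h^2|\log h|)$, and the reciprocal produces the claimed $\gamma\eps^3(1+\eps+\eps^2)/2$ plus a $\gamma h^2 \eps^2 \log(h)/2$ contribution from the logarithmic piece of $F$. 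The perimeter contribution $4\pi(a+c)cE$, once $c$ is inserted and the $aF$ part of the volume constraint is exploited, produces the $(2\pi h^4/\eps^2)\log h$ piece, delivering \eqref{Case_one_exE}.

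In the regime $h > \eps/2$, $1-u$ is no longer bounded below but one still has $(1-u)/(1-z) = O(\eps^2/h^2) = O(1)$, so the ``near-1'' expansions \eqref{F_h_big}--\eqref{E_h_big} apply. They give $E(t_0/2 - \pi/4,z) = 1 - (a^2/4c^2)\log(a^2/c^2) + O(a^2)$, whose leading log piece, when carried through the factor $(a+c)c$ in the perimeter, contributes exactly $-(2\pi h^4/\eps^2)\log(h^2/\eps)$. Using the volume constraint to solve for $c$ to sufficient precision shows the remaining perimeter and boundary-cap terms contribute $4\pi + O(\eps^2)$; since the Coulomb term is nonnegative, dropping it yields \eqref{E_case1_hb}.

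The main obstacle is the bookkeeping of errors as they propagate through the nonlinear implicit equation \eqref{V} for $c$ and through the reciprocal of $L$ in the Coulomb denominator, so that the final remainders match the claimed orders precisely. Each individual manipulation is elementary once the key scalings $a \asymp h^2/\eps$ and $1-z \asymp h^4/\eps^2$ are in place, but some care is required near the crossover $h \sim \eps/2$, where both elliptic-integral expansions remain valid and must be checked to yield consistent asymptotics.
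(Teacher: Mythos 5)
Your overall plan is the same as the paper's: expand the explicit formulas from Lemma~\ref{case1} using the elliptic integral expansions, treating $h>\eps/2$ with the ``near-$1$'' expansions \eqref{F_h_big}--\eqref{E_h_big} and $h\leq \eps/2$ with \eqref{F_ex}--\eqref{E_ex}. The preliminary scalings you set up ($a\asymp h^2/\eps$, $1-z\asymp h^4/\eps^2$, $\sqrt{1-zu^2}=h/c$) and the entire $h>\eps/2$ branch are correct and match the paper.

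There is, however, a concrete gap in your $h\leq \eps/2$ branch. You write $E(t_0/2-\pi/4,z)=u+\BigO{h^4|\log h|/\eps^2}$, which buries the term $-\frac{z-1}{2}\arctanh(u)\asymp -\frac{h^4}{2\eps^2}\log h$ in the error. But that term is precisely where the claimed leading coefficient $-\frac{2\pi h^4}{\eps^2}\log h$ in \eqref{Case_one_exE} comes from (via the factor $4\pi(a+c)c E$), and it is of the same order as the claimed final error $\BigO{h^4/\eps^2}$, so you cannot recover it once it has been absorbed. The paper keeps this term explicitly in \eqref{E_expanded}. Worse, your intermediate assertion $c=1+\BigO{h^2+\eps^3}$ from the volume constraint \eqref{V} is not justified under your expansion: with the $\arctanh$ piece of $E$ dumped into an $\BigO{h^4|\log h|/\eps^2}$ error, the volume constraint inherits that error, and $h^4|\log h|/\eps^2$ can dominate $h^2$ (for instance when $h\sim\eps$). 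What actually makes the volume constraint clean is an exact cancellation: the $\arctanh(u)$ contribution of $(2c(a^2+c^2)+3ac^2)E$ (which is $\approx a^2 c\,\arctanh u$ at leading order) cancels the $-a^2cF\approx -a^2c\,\arctanh u$ piece, so \eqref{vol_ex} has no logarithmic correction. Your phrase about ``exploiting the $aF$ part of the volume constraint'' to produce the $\log h$ piece in the perimeter does not resolve this: if you eliminate $E$ via the volume constraint to make $F$ appear in the perimeter, you still need to solve for $c$ to a precision that requires the very $E$ expansion you dropped. You must keep the $-\frac{z-1}{2}\arctanh(u)$ term in $E$ explicitly, observe the cancellation in \eqref{V}, and only then expand. (The paper's introduction of the auxiliary variable $\alpha$ via $c=1-\alpha\eps$ in \eqref{h_alpha}--\eqref{En_alpha} is bookkeeping you may dispense with, but the precision issue above is not optional.)
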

  \begin{proof}
    Let $(\Omega,X)$ be a case 1 test configuration with contact
    height $h$. Now we will expand the expressions for the energy of
    $(\Omega, X)$ and volume of $\Omega$ given in Lemma \ref{case1} in
    terms of $\varepsilon$ and the contact height
    $0< h \leq \varepsilon$. To do this, first we will obtain a lower
    bound on the energy in the regime where
    $h > \frac{\varepsilon}{2}$. \par
    Let $h > \frac{\varepsilon}{2}$.
    Using the notation from Lemma \ref{case1}, we have that the
    minimum $a$ of the extended unduloid section of $\Omega$ is given
    by,
\begin{equation}
\label{a_equ}
a= \frac{c h^2- \varepsilon h^2}{c \varepsilon -h^2}=
\frac{h^2}{\varepsilon} -\frac{h^2}{c}+ \frac{h^4}{c \varepsilon^2}+
\BigO{\frac{h^4}{\varepsilon}}. 
\end{equation}
This gives us that
\begin{multline}
\label{t_0ex}
\frac{t_0}{2}- \frac{\pi}{4} = \frac{\pi}{4} - \frac{1}{2}\arcsin
\left(\frac{2h^2-(c^2+a^2)}{c^2-a^2} \right) =\arcsin \left (
  \sqrt{\frac{c^2-h^2}{c^2-a^2}} \right ) \\ =\arcsin \left ({1-
    \frac{h^2}{2 c^2} + \frac{h^4}{2 c^2 \varepsilon^2}} +
  \BigO{\frac{h^4}{\varepsilon},\frac{h^6}{\varepsilon^3}} \right ).
\end{multline}
Since $h > \frac{\varepsilon}{2} $, from \eqref{a_equ}, \eqref{t_0ex},
and \eqref{F_h_big} we get that
\begin{equation}
\label{F_hB}
F\left(\frac{t_0}{2}-\frac{\pi}{4} , \frac{c^2-a^2}{c^2} \right)=
F\left(\frac{t_0}{2}-\frac{\pi}{4} , 1- \frac{h^4}{\varepsilon^2 c^2
  }+ \BigO{\frac{h^4}{\varepsilon}}\right)=- \log{ \left(
    \frac{h^2}{\varepsilon c} \right)} + \BigO{1}, 
\end{equation}
and from \eqref{a_equ}, \eqref{t_0ex}, and \eqref{E_h_big} we get
\begin{multline}
\label{E_hB}
E \left(\frac{t_0}{2}-\frac{\pi}{4} , \frac{c^2-a^2}{c^2} \right)= E
\left(\frac{t_0}{2}-\frac{\pi}{4} , 1- \frac{h^4}{\varepsilon^2
    c^2 }+ \BigO{\frac{h^4}{\varepsilon}}\right) \\
= 1 - \frac{h^4}{2 c^2 \varepsilon^2 } \log{ \left (
    \frac{h^2}{\varepsilon c} \right)} + \BigO{h^2}.
\end{multline}
Expanding our volume constraint given in \eqref{V} we obtain  
\begin{equation}
\label{vol_ex1}
    2=2c^3 + \frac{3h^2c^2}{\varepsilon}+ \BigO{\varepsilon^2}.
\end{equation}
Thus, \eqref{vol_ex1} implies 
\begin{equation}
\label{c_hb}
c > 1- \frac{h^2 }{2 \varepsilon} + \BigO{\varepsilon^2}. 
\end{equation}
Lastly, using \eqref{F_hB}, \eqref{E_hB}, and \eqref{c_hb} to expand
the contribution of the perimeter to our energy given in
\eqref{Energy_fun}, we obtain
\begin{align}
\label{Eenergy_exhB}
  E_{\varepsilon} \left(\Omega,X \right)> 4 \pi c^2  + 4 \pi \frac{h^2
  c }{\varepsilon}-\frac{2 \pi h^4 }{\varepsilon^2}\log{ \left(
  \frac{h^2}{c \varepsilon} \right) } + \BigO{\varepsilon^2} \notag \\
  > 4
  \pi-\frac{2 \pi h^4 }{\varepsilon^2}\log{ \left( \frac{h^2}{c
  \varepsilon} \right) }+ \BigO{\varepsilon^2} , 
\end{align}
for $\varepsilon < \varepsilon_0$ with $\eps_0$ small enough. This
proves \eqref{E_case1_hb}.

Now we move on to the case where 
\begin{equation}
\label{H_small}
h  \leq \frac{\varepsilon}{2}. 
\end{equation} 
First note that from \eqref{F_ex}, \eqref{a_equ}, \eqref{t_0ex}, and
\eqref{H_small} we get that
 \begin{multline}
 \label{F_expanded}
 F\left(\frac{t_0}{2}-\frac{\pi}{4} , \frac{c^2-a^2}{c^2} \right)=
 F\left(\frac{t_0}{2}-\frac{\pi}{4} , 1- \frac{h^4}{\varepsilon^2 c^2
   }+ \BigO{\frac{h^4}{\varepsilon}}\right) \\
 = \arctanh{ \left (1+ \frac{h^2}{2 c^2}
     \left(-1+\frac{h^2}{\varepsilon^2} \right) \right) } + \BigO{1},
 \end{multline}
 and from \eqref{E_ex}, \eqref{a_equ}, \eqref{t_0ex}, and
 \eqref{H_small} we also have that
\begin{multline}
\label{E_expanded}
E \left(\frac{t_0}{2}-\frac{\pi}{4} , \frac{c^2-a^2}{c^2} \right)= E
\left(\frac{t_0}{2}-\frac{\pi}{4} , 1- \frac{h^4}{\varepsilon^2 c^2 }+
  \BigO{\frac{h^4}{\varepsilon}}\right)= \\ 1+ \frac{h^2}{2 c^2} \left
  (-1 + \frac{h^2}{\varepsilon^2} \left (1+ \arctanh { \left ( 1+
        \frac{h^2}{2 c^2} \left (-1 + \frac{h^2}{\varepsilon^2}
        \right) \right) } \right ) \right ) +
\BigO{\frac{h^4}{\varepsilon^2}}.
\end{multline}

Thus, from \eqref{F_expanded}, \eqref{E_expanded}, and
\eqref{Energy_fun} we get that
\begin{align}
\label{Eenergy_ex}
  E_{\varepsilon} \left(\Omega,X \right)=  4 \pi c^2  + 4 \pi
  \frac{h^2 c }{\varepsilon}- 4 \pi h^2+\frac{2 \pi h^4
  }{\varepsilon^2}\arctanh{ \left (1 - \frac{h^2}{2 c^2} \left (1-
  \frac{h^2}{\varepsilon^2} \right ) \right) }
  +\BigO{\frac{h^4}{\varepsilon^2}} \notag \\
  + \gamma \varepsilon^3 \left
  (\frac{1}{2c} - \frac{h^2}{2c^2 \varepsilon} \arctanh{ \left (1 -
  \frac{h^2}{2 c^2} \left (1- \frac{h^2}{\varepsilon^2} \right )
  \right) } + \frac{\sqrt{\varepsilon^2-h^2}}{2c^2}+
  \frac{\varepsilon^2}{2c^3}+\BigO{\varepsilon^3,
  \frac{h^2}{\varepsilon}  }\right). 
\end{align}
Furthermore, from expanding our volume constraint given in \eqref{V}
we get that
\begin{equation}
\label{vol_ex}
2=2c^3 + \frac{3h^2c^2}{\varepsilon}-3h^2c+
\BigO{\frac{h^4}{\varepsilon^2}}. 
\end{equation}
Expanding \eqref{vol_ex} we get that   
\begin{equation}
\label{h_exC}
h= \sqrt[]{\frac{2 \varepsilon}{3} \left( {1 \over c^2}-c \right)
  \left(1 + \frac{\varepsilon}{c} + \frac{\varepsilon^2}{c^2}+
    \BigO{\frac{h^2}{\varepsilon}, \varepsilon^3 } \right)}. 
\end{equation}
Now we introduce the new variable $\alpha$, which is defined via
\begin{equation}
c = 1- \alpha \varepsilon.
\end{equation}
Thus, \eqref{h_exC} allows us to express $h$ in terms of $\alpha$,
\begin{equation}
\label{h_alpha}
h=\sqrt[]{2 \alpha \varepsilon^2 + 2 \alpha \varepsilon^3 + 2 \alpha
  \varepsilon^4+ \BigO{\alpha \varepsilon^5, \alpha^2 \varepsilon^3,
    \alpha \varepsilon h^2}}  
\end{equation}
Finally, plugging \eqref{h_alpha} into \eqref{Eenergy_ex} we obtain
the following leading order equation for the energy: 
\begin{align}
  E_{\varepsilon} \left(\Omega, X \right)= 4 \pi+ 8 \pi \alpha^2
  \varepsilon^2 \arctanh{(1-\alpha \varepsilon^2)} + \BigO{\alpha
  \varepsilon^4, \alpha^2 \varepsilon^2, \alpha h^2} \notag \\
  + \gamma
  \varepsilon^3 \left(\frac{1}{2}- \alpha \varepsilon
  \arctanh{(1-\alpha \varepsilon^2)}+\frac{\varepsilon}{2}+
  \frac{\varepsilon^2}{2}+ \BigO{\varepsilon^3,\alpha \varepsilon}
  \right). 
\end{align}
Simplifying further gives
\begin{align}
  E_{\varepsilon} \left(\Omega,X\right)=4 \pi + \frac{\gamma
  \varepsilon^3}{2}(1+\varepsilon+\varepsilon^2)+(8 \pi \alpha^2
  \varepsilon^2-\gamma \alpha \varepsilon^4)\arctanh{(1-\alpha
  \varepsilon^2)} \notag \\
  +\BigO{ \gamma \varepsilon^6 , (\gamma+1) \alpha
  \varepsilon^4, \alpha^2 \varepsilon^2, \alpha h^2} , 
\end{align}
which gives us
\begin{equation}
\label{En_alpha}
E_{\varepsilon} \left(\Omega,X \right)=4 \pi + \frac{\gamma
  \varepsilon^3}{2}(1+\varepsilon+ \varepsilon^2)-(4 \pi \alpha^2
\varepsilon^2-\frac{\gamma \alpha \varepsilon^4}{2})\log{(\alpha
  \varepsilon^2)}   +\BigO{ \gamma \varepsilon^6 , (\gamma+1) \alpha
  \varepsilon^4, \alpha^2 \varepsilon^2, \alpha h^2}. 
\end{equation}
Furthermore, we use \eqref{h_alpha} to convert \eqref{En_alpha} into
\begin{equation}
\label{E_expand_h}
E_{\varepsilon} \left(\Omega,X\right)=4 \pi + \frac{\gamma
  \varepsilon^3}{2}(1+\varepsilon+ \varepsilon^2)- \left(\frac{2\pi
    h^4}{\varepsilon^2}-\frac{\gamma h^2 \varepsilon^2}{2} \right)
\log{( h)}+\BigO{\gamma \varepsilon^6,(\gamma+1) h^2 \varepsilon^2,
  \frac{h^4}{\varepsilon^2}}, 
\end{equation}
which completes the proof. 
\end{proof}

\begin{proposition}
\label{Case2_non}
There exists a universal $\varepsilon_0>0$ such that if
$\varepsilon < \varepsilon_0$ and
$(\Omega, X) \in \mathcal A_{\frac{4 \pi}{3},2,\varepsilon}$ is a case
2 test configuration then $(\Omega, X) $ cannot be a minimizer to
\eqref{En_N2}.
\end{proposition}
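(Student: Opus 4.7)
The plan is to show that any case 2 configuration has strictly greater energy than a suitably chosen case 1 competitor, hence cannot achieve the infimum of $E_\varepsilon$ in $\mathcal A_{\frac{4\pi}{3},2,\varepsilon}$. The strategy exploits the observation that case 2 forces the two charge balls to contribute an additive $4\pi\varepsilon^2$ to the free surface, which is asymptotically larger than what case 1 pays.

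First, I would analyze the volume constraint \eqref{V_case2} asymptotically. Setting $z = (c^2 - a^2)/c^2$ so that $1 - z = a^2/c^2$ and applying the expansions \eqref{F_comp}--\eqref{E_comp}, the constraint reduces to $2c^3 + 3ac^2 + 2\varepsilon^3 = 2 + O(a^2\log(1/a))$, from which a perturbation analysis with $c = 1 - \beta$ yields
\begin{equation}
c = 1 - \tfrac{a}{2} + O\bigl(a^2\log(1/a),\varepsilon^3\bigr).
\end{equation}
Here $a \le \varepsilon$, since the tangency condition \eqref{a_equation1} together with $h \le \varepsilon$ forces $a \le h \le \varepsilon$.

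Second, I would substitute these expansions into the energy \eqref{Energy_fun_case2}. Using $E(\pi/2, z) = 1 + \frac{a^2}{2c^2}\log(c/a) + O(a^2)$ and the expression for $c$ above, the unduloid perimeter contribution becomes $4\pi c(a+c)E(\pi/2,z) = 4\pi + 2\pi a^2 \log(1/a) - \pi a^2 + O(\text{smaller})$. Combined with the additive $4\pi\varepsilon^2$ and the nonnegative Coulombic term, this gives
\begin{equation}
E_{\text{case 2}} \ge 4\pi + 4\pi\varepsilon^2 + 2\pi a^2 \log(1/a) - \pi a^2 + \tfrac{1}{2}\gamma\varepsilon^3 + O(\text{smaller}).
\end{equation}

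Third, I would construct a case 1 competitor using Proposition \ref{Theorem_case1}. Choosing $h^\ast$ of order $\varepsilon^2$, the coefficient $\bigl(\frac{2\pi (h^\ast)^4}{\varepsilon^2} - \frac{\gamma (h^\ast)^2 \varepsilon^2}{2}\bigr)$ in \eqref{Case_one_exE} is of order $\varepsilon^6$, so the logarithmic correction contributes only $O(\varepsilon^6\log(1/\varepsilon))$, yielding
\begin{equation}
E_{\text{case 1}}(h^\ast) \le 4\pi + \tfrac{1}{2}\gamma\varepsilon^3(1+\varepsilon+\varepsilon^2) + O\bigl(\gamma\varepsilon^6, \varepsilon^4\bigr).
\end{equation}
Taking the difference, $E_{\text{case 2}} - E_{\text{case 1}}(h^\ast) \ge 4\pi\varepsilon^2 + 2\pi a^2\log(1/a) - \pi a^2 - O(\text{smaller})$, which is strictly positive for $\varepsilon$ sufficiently small and all $a \in (0, \varepsilon]$. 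This contradicts minimality of any case 2 configuration.

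The main obstacle will be controlling the error terms uniformly across the full range $a \in (0,\varepsilon]$. When $a$ is comparable to $\varepsilon$, the asymptotic expansions for $E(\pi/2,z)$ and $F(\pi/2,z)$ have less control, and one must verify that the leading positive gap $4\pi\varepsilon^2$ is never consumed by the subleading $a$-dependent corrections. A careful case split depending on the scale of $a$ relative to $\varepsilon$ (say, $a \le \varepsilon^{3/2}$ versus $\varepsilon^{3/2} < a \le \varepsilon$) should handle both regimes, since in each the negative contributions stay safely below $4\pi\varepsilon^2$.
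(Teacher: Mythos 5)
Your approach is essentially the paper's: lower-bound the case 2 energy by exploiting the extra $4\pi\varepsilon^2$ contribution from the two tangent charge balls, then exhibit an admissible competitor (for you, a case 1 configuration with $h^*\sim\varepsilon^2$; for the paper, the ball $B_1(0)$ with charges at $\pm(1-\varepsilon)e_1$) whose energy lacks that term, and conclude the case 2 configuration cannot achieve the infimum. The two competitors give essentially the same upper bound $4\pi + \tfrac12\gamma\varepsilon^3(1+\varepsilon+\cdots)$, and your asymptotics for $c$, $E(\pi/2,z)$, and the unduloid perimeter are correct.

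However, there is a genuine gap in the concluding step. You claim the difference
\begin{equation*}
E_{\text{case 2}} - E_{\text{case 1}}(h^\ast) \ge 4\pi\varepsilon^2 + 2\pi a^2\log(1/a) - \pi a^2 - O(\text{smaller})
\end{equation*}
is strictly positive for all small $\varepsilon$ and all $a\in(0,\varepsilon]$, but the dropped error terms are not uniformly ``smaller'' than $4\pi\varepsilon^2$ without an a priori bound on $\gamma$. Concretely, the case 1 competitor's Coulombic energy $\tfrac12\gamma\varepsilon^3(1+\varepsilon+\varepsilon^2)$ exceeds the $\tfrac12\gamma\varepsilon^3$ in your case 2 lower bound by $\approx\tfrac12\gamma\varepsilon^4$, and the corrections to the case 2 Coulombic term from the lengthened neck (distance $L\approx 2+2a\log(1/a)>2$, so interaction energy actually slightly below $\tfrac12\gamma\varepsilon^3$) are of order $\gamma\varepsilon^2 h^2\log(1/\varepsilon)\lesssim \gamma\varepsilon^4\log(1/\varepsilon)$. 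Both can be comparable to or larger than $4\pi\varepsilon^2$ when $\gamma\gtrsim\varepsilon^{-2}$ or $\gamma\gtrsim\varepsilon^{-2}/\log(1/\varepsilon)$, so the sign of the difference is not controlled. The paper closes exactly this gap by not asserting positivity directly but instead deducing from the comparison that $\gamma\gtrsim c_0/(\varepsilon^2\log(1/\varepsilon))$, which is incompatible with Lemma~\ref{Exist_MinN2} (existence of a minimizer requires $\gamma\lesssim 8\pi/\varepsilon$) for small $\varepsilon$. Your argument can be repaired by the same device: invoke Lemma~\ref{Exist_MinN2} first to restrict to $\gamma\le 8\pi/\varepsilon + C\varepsilon^{-2/3}$, after which the $\gamma$-dependent error terms really are $o(\varepsilon^2)$ and the difference is strictly positive. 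Also, the case split on $a$ versus $\varepsilon^{3/2}$ that you flag as the ``main obstacle'' is not needed; the expansions of $E(\pi/2,z)$ and $F(\pi/2,z)$ remain valid uniformly for $a\le\varepsilon$, and it is the $\gamma$-dependence, not the $a$-dependence, that causes trouble.
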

\begin{proof}
  We will argue by contradiction. Thus, first assume that
  $(\Omega, X)$ is a case 2 minimizer with contact height $h$. Now
  we will expand the expressions for the energy of $(\Omega, X)$ and
  volume of $\Omega$ given in Lemma \ref{case2} in terms of
  $\varepsilon$ and the contact height $0< h \leq \varepsilon$. To do
  this, first note that from \eqref{F_comp} and \eqref{a_equation1} we
  have

\begin{equation}
\label{F_complete0}
F \left(\frac{\pi}{2},\frac{c^2-a^2
}{c^2}  \right)= F \left(\frac{\pi}{2},1- \frac{h^4}{\varepsilon^2 c^2
}+ \BigO{\frac{h^4}{\varepsilon}} \right) =- \log{\left(
  \frac{h^2}{\varepsilon c} \right)}+ \BigO{1}, 
\end{equation}
and from \eqref{E_comp}  and \eqref{a_equation1} we get
\begin{equation}
\label{E_complete0}
E \left(\frac{\pi}{2},\frac{c^2-a^2
  }{c^2}  \right)= E \left(\frac{\pi}{2},1- \frac{h^4}{\varepsilon^2
    c^2 }+ \BigO{\frac{h^4}{\varepsilon}} \right) =1 - \frac{h^4}{2
  \varepsilon^2 c^2} \log{\left( \frac{h^2}{\varepsilon c} \right)} +
\BigO{\frac{h^4}{\varepsilon^2}}.  
\end{equation}
Expanding the the volume constraint given in \eqref{V_case2} gives us 
\begin{equation}
\label{V_eq2}
2= 2 c^3 + \frac{3 c^2 h^2}{\varepsilon} + \BigO{h^2, \varepsilon^3}.
\end{equation}
Using \eqref{F_complete0} and \eqref{E_complete0} we expand the energy
given in \eqref{Energy_fun_case2}, to obtain
\begin{equation}
\label{E_expand2}
E_{\varepsilon} \left(\Omega, X \right)> 4 \pi c^2+ \frac{4 \pi c
  h^2}{\varepsilon}- \frac{2 \pi h^4}{\varepsilon^2} \log{\left(
    \frac{h^2}{\varepsilon c} \right)} + 4 \pi \varepsilon^2 +
\frac{\gamma \varepsilon^3}{2}+ \BigO{h^2, \gamma \varepsilon^2 h^2
  \log{\left( \frac{h^2}{\varepsilon c} \right)} }. 
\end{equation}

Furthermore,  \eqref{V_eq2} implies that  
\begin{equation}
\label{c_big2}
c > 1 - \frac{c^2 h^2}{2 \varepsilon} + \BigO{h^2, \varepsilon^3},
\end{equation}
and from \eqref{E_expand2} and \eqref{c_big2} we get 
\begin{equation}
  E_{\varepsilon}  \left(\Omega, X \right) >4 \pi - \frac{2 \pi
    h^4}{\varepsilon^2} \log{\left( \frac{h^2}{\varepsilon c} \right)} +
  4 \pi \varepsilon^2 + \frac{\gamma \varepsilon^3}{2}+
  \BigO{\varepsilon^3, h^2, \gamma \varepsilon^2 h^2 \log{\left(
        \frac{h^2}{\varepsilon c} \right)} }. 
\end{equation}
Lastly, since we assumed that $\left(\Omega, X \right) $ is a
minimizer, from considering the test configuration given by
$\big (B_{1}(0,0,0), \{ (\varepsilon-1,0,0),(1- \varepsilon,0,0) \}
\big)$ we obtain
\begin{align}
\label{lambda_big}
  4 \pi + \frac{\gamma \varepsilon^3}{2(1-\varepsilon)} \geq
  E_{\varepsilon}  \left(\Omega, X \right) =  4 \pi - \frac{2 \pi
  h^4}{\varepsilon^2} \log{\left( \frac{h^2}{\varepsilon c} \right)}
  \notag \\
  + 4 \pi \varepsilon^2 + \frac{\gamma \varepsilon^3}{2}+
  \BigO{\varepsilon^3, h^2, \gamma \varepsilon^2 h^2 \log{\left(
  \frac{h^2}{\varepsilon c} \right)} }. 
\end{align}
Thus, for $\varepsilon < \varepsilon_0$ small enough, \eqref{lambda_big} implies that 
\begin{equation}
  \gamma \varepsilon^4 \geq 2 \pi \varepsilon^2+\BigO{ \gamma
    \varepsilon^2 h^2 \log{\left( \frac{h^2}{\varepsilon c} \right)}
  }. 
\end{equation} 
However, this implies that
$\gamma > -\frac{c_0}{\varepsilon^2 \log(\varepsilon)}$, where $c_0>0$
is a universal constant, which contradicts Lemma \ref{Exist_MinN2} for
$\varepsilon< \varepsilon_0$ small enough. Thus, $(\Omega, X ) $
cannot be a minimizer.
\end{proof}

\begin{proposition}
\label{Case3_non}
There exists a universal $\varepsilon_0>0$ such that if
$\varepsilon < \varepsilon_0$ and
$(\Omega, X) \in \mathcal A_{\frac{4 \pi}{3},2,\varepsilon}$ is a case
3 test configuration then $(\Omega, X) $ cannot be a minimizer to
\eqref{En_N2}.
\end{proposition}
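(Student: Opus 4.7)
The plan is to argue by contradiction in close analogy with Proposition \ref{Case2_non}. I would assume that $(\Omega, X) \in \mathcal A_{\frac{4\pi}{3}, 2, \varepsilon}$ is a Case 3 minimizer with contact height $h \leq \varepsilon$. By Lemma \ref{Exist_MinN2}, the existence of a classical minimizer forces $\gamma \leq 8\pi/\varepsilon + C/\varepsilon^{2/3}$, and in particular $\gamma \varepsilon^3 = O(\varepsilon^2)$.

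The first step is to expand the volume and energy formulas of Lemma \ref{case3} using the elliptic integral asymptotics \eqref{F_comp}--\eqref{E_h_big}. The tangency relation \eqref{a_equation1} gives $a = O(h^2/\varepsilon)$, so the modulus $k^2 = (c^2-a^2)/c^2$ satisfies $1 - k^2 = O(h^4/\varepsilon^2)$, whence $E(\pi/2, k^2) = 1 + o(1)$. Introducing $E_0 := E(t_0/2 - \pi/4, k^2) \in [0,1]$, the volume constraint \eqref{V_case3} yields at leading order $c^3 (2 - E_0) = 1 + o(1)$, and \eqref{Energy_fun_case3} gives
\begin{equation*}
P(\Omega) = 4\pi c^2 (2 - E_0) + 4\pi \varepsilon \bigl( \varepsilon + \sqrt{\varepsilon^2 - h^2}\bigr) + o(\varepsilon^2) = 4\pi (2-E_0)^{1/3} + 4\pi \varepsilon^2 + o(\varepsilon^2),
\end{equation*}
where the final equality uses $c^2(2-E_0) = (2-E_0)^{1/3}(1+o(1))$. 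Since $(2-E_0)^{1/3} \geq 1$, this gives $P(\Omega) \geq 4\pi + 4\pi \varepsilon^2 - o(\varepsilon^2)$, reflecting the fact that the two-bump geometry of Case 3 is strictly worse isoperimetrically than a single ball unless it degenerates to Case 2.

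The second step is to combine this with a sharp diameter bound. By Lemma \ref{N2bbd}, $L := |x_1 - x_2| \leq \mathrm{diam}(\Omega) \leq 2(1 + C\varepsilon^{1/3})$, so $V_\varepsilon(X) \geq \gamma \varepsilon^3 / [2(1 + C \varepsilon^{1/3})]$. Comparing $E_\varepsilon(\Omega, X)$ with the ball competitor $(B_1(0), \{\pm(1-\varepsilon) e_1\})$ of energy $4\pi + \gamma\varepsilon^3 / [2(1-\varepsilon)]$, minimality of $(\Omega, X)$ would force
\begin{equation*}
4\pi \varepsilon^2 - o(\varepsilon^2) \leq \gamma \varepsilon^3 \left[ \frac{1}{2(1-\varepsilon)} - \frac{1}{2(1 + C\varepsilon^{1/3})} \right] = O(\gamma \varepsilon^{10/3}),
\end{equation*}
hence $\gamma \geq c_0 \varepsilon^{-4/3}$ for some universal $c_0 > 0$ and all sufficiently small $\varepsilon$. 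Since $\varepsilon^{-4/3} \gg \varepsilon^{-1} + \varepsilon^{-2/3}$ as $\varepsilon \to 0$, this contradicts Lemma \ref{Exist_MinN2}.

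The hard part will be making the perimeter lower bound $P(\Omega) \geq 4\pi + 4\pi \varepsilon^2 - o(\varepsilon^2)$ rigorous in the degenerate regime $E_0 \to 1$ (equivalently $t_0 \to 3\pi/2$), where Case 3 approaches Case 2 geometrically. In this regime the correction $E(\pi/2, k^2) - 1$ is of order $(1-k^2) |\log(1-k^2)| = O((h^4/\varepsilon^2) |\log \varepsilon|)$, which can be comparable to $\varepsilon^2$ when $h$ is close to $\varepsilon$. Fortunately, since $k^2 - 1 < 0$ and $\log(1 - k^2) < 0$, this correction is non-negative and strengthens the perimeter bound rather than weakening it; a careful tracking of signs and leading orders in the expansions, analogous to but more intricate than the one in the proof of Proposition \ref{Case2_non}, should establish the claimed estimate uniformly in $E_0 \in [0,1]$ and $h \in (0, \varepsilon]$.
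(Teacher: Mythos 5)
Your overall strategy---contradiction via a competitor comparison, following the template of Proposition~\ref{Case2_non}---is logically viable, but it differs from the paper's route, and the crucial perimeter estimate is not actually established as written.

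The paper splits into the two regimes $h>\varepsilon/2$ and $h\le\varepsilon/2$ because the elliptic-integral expansions \eqref{F_ex}--\eqref{E_ex} and \eqref{F_h_big}--\eqref{E_h_big} are valid in complementary ranges; you expand uniformly in $h\in(0,\varepsilon]$, which the stated asymptotics do not support. More importantly, your step
\begin{equation*}
P(\Omega)=4\pi c^2(2-E_0)+4\pi\varepsilon\bigl(\varepsilon+\sqrt{\varepsilon^2-h^2}\bigr)+o(\varepsilon^2)
=4\pi(2-E_0)^{1/3}+4\pi\varepsilon^2+o(\varepsilon^2)
\end{equation*}
discards two quantities as $o(\varepsilon^2)$ that are in fact $\Theta(\varepsilon^2)$: the factor $a/c\sim h^2/(c\varepsilon)$ in $c(a+c)$, and---critically---$4\pi\varepsilon\sqrt{\varepsilon^2-h^2}$, which for $h\le\varepsilon/2$ is at least $2\sqrt{3}\,\pi\varepsilon^2$. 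The $a/c$ correction partly cancels against the analogous term in the volume constraint, but that cancellation has to be tracked and leaves $O(h^2)$ residuals that are exactly of order $\varepsilon^2$. The quantity $4\pi\varepsilon\sqrt{\varepsilon^2-h^2}$, on the other hand, is not an error at all: it is the surplus surface area of the protruding charge caps in a Case~3 configuration compared with Case~1 (where the corresponding term is $4\pi\varepsilon(\varepsilon-\sqrt{\varepsilon^2-h^2})$, much smaller), and it is the very mechanism that rules out Case~3. By folding it into the error you throw away the engine of the proof. The ``degenerate regime $E_0\to1$'' you worry about is not the obstruction; the obstruction is the unjustified $o(\varepsilon^2)$ bookkeeping, and the remedy is to keep the cap-area term.

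The paper exploits precisely this: after controlling the $h>\varepsilon/2$ branch via the $h^4\varepsilon^{-2}\log$ term, in the branch $h\le\varepsilon/2$ it obtains the \emph{strict} bound $E_\varepsilon(\Omega,X)\ge P(\Omega)\ge 4\pi+4\pi\varepsilon^2$ with no slack, because the $4\pi\varepsilon\sqrt{\varepsilon^2-h^2}\ge 2\sqrt3\,\pi\varepsilon^2$ surplus absorbs all remaining $O(h^2)$ errors. This contradicts Lemma~\ref{generalized_en} outright (any classical minimizer has energy strictly below $4\pi(1+\varepsilon^2)$), so the Coulombic comparison and the intermediate conclusion $\gamma\gtrsim\varepsilon^{-4/3}$ are unnecessary. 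Your second step is correct as logic but is compensating for a weaker (and as-yet unproved) first step; if you repair the first step by retaining $4\pi\varepsilon\sqrt{\varepsilon^2-h^2}$ and tracking the $a/c$ and $E(\pi/2,k^2)-1$ terms carefully, you will find the argument collapses into the paper's shorter one and the ball competitor is no longer needed.
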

\begin{proof}
  Assume that $(\Omega, X)$ is a case 3 minimizer with contact
  height $h$. Now we will expand the expressions for the energy of
  $(\Omega, X)$ and volume of $\Omega$ given in Lemma \ref{case3} in
  terms of $\varepsilon$ and the contact height
  $0< h \leq \varepsilon$. \par However, first we will eliminate the
  regime where $h > \frac{\varepsilon }{2} $. To do this, assume that
  $h > \frac{\varepsilon }{2} $, then from \eqref{F_comp} and
  \eqref{a_equation1} we get

\begin{equation}
\label{F_complete}
F \left(\frac{\pi}{2},\frac{c^2-a^2
  }{c^2}  \right)= F \left(\frac{\pi}{2},1- \frac{h^4}{\varepsilon^2
    c^2 }+ \BigO{\frac{h^4}{\varepsilon}} \right) =- \log{\left(
    \frac{h^2}{\varepsilon c} \right)}+ \BigO{1}, 
\end{equation}
and from \eqref{E_comp} and \eqref{a_equation1} we get
\begin{multline}
\label{E_complete}
E \left(\frac{\pi}{2},\frac{c^2-a^2
}{c^2}  \right)= E \left(\frac{\pi}{2},1- \frac{h^4}{\varepsilon^2 c^2
}+ \BigO{\frac{h^4}{\varepsilon}} \right) \\
=1 - \frac{h^4}{2 \varepsilon^2 c^2} \log{\left(
    \frac{h^2}{\varepsilon c} \right)} +
\BigO{\frac{h^4}{\varepsilon^2}}.  
\end{multline}

Since by assumption $h > \frac{\varepsilon}{2} $, from
\eqref{F_h_big}, \eqref{a_equation1} and \eqref{t0_equation1} we get
that
\begin{equation}
\label{F_hB3}
 F\left(\frac{t_0}{2}-\frac{\pi}{4} , \frac{c^2-a^2}{c^2} \right)=
 F\left(\frac{t_0}{2}-\frac{\pi}{4} , 1- \frac{h^4}{\varepsilon^2 c^2
   }+ \BigO{\frac{h^4}{\varepsilon}}\right)=- \log{ \left(
     \frac{h^2}{\varepsilon c} \right)} + \BigO{1}, 
\end{equation}
and from \eqref{E_h_big}, \eqref{a_equation1} and \eqref{t0_equation1}
we get
\begin{multline}
\label{E_hB3}
E \left(\frac{t_0}{2}-\frac{\pi}{4} , \frac{c^2-a^2}{c^2} \right)= E
\left(\frac{t_0}{2}-\frac{\pi}{4} , 1- \frac{h^4}{\varepsilon^2
    c^2 }+ \BigO{\frac{h^4}{\varepsilon}}\right) \\
= 1 - \frac{h^4}{2 c^2 \varepsilon^2 } \log{ \left(
    \frac{h^2}{\varepsilon c} \right)} + \BigO{h^2}.
\end{multline}
Thus, using \eqref{F_complete}, \eqref{E_complete}, \eqref{F_hB3}, and
\eqref{E_hB3} to expand the contribution of the perimeter to
\eqref{Energy_fun_case3}, we obtain
\begin{align}
\label{E_bigg_hb3}
 E_{\varepsilon} \left(\Omega, X\right)> 4 \pi c^2 + \frac{4 \pi c
  h^2}{\varepsilon}- \frac{2 \pi h^4}{\varepsilon^2} \log{ \left(
  \frac{h^2}{\varepsilon c} \right)} + \BigO{\varepsilon^2}, 
\end{align}
and from our volume constraint given in \eqref{V_case3} we obtain 
\begin{equation}
2= 2 c^3 + \frac{3 h^2 c^2}{\varepsilon} + \BigO{\varepsilon^2},
\end{equation}
which implies that 
\begin{equation}
\label{hbcb}
c> 1- \frac{c^2 h^2}{2 \varepsilon} + \BigO{\varepsilon^2}.
\end{equation}
Thus, from \eqref{E_bigg_hb3} and \eqref{hbcb} we get that 
\begin{equation}
  E_{\varepsilon} \left(\Omega, X  \right)>  4 \pi - \frac{2 \pi
    h^4}{\varepsilon^2} \log{ \left( \frac{h^2}{\varepsilon c}
    \right)} + \BigO{\varepsilon^2},  
\end{equation}
which contradicts Lemma \ref{generalized_en} whenever
$\varepsilon< \varepsilon_0$ with $\eps_0$ small, and we conclude that
\eqref{H_small} holds.

Now from \eqref{F_ex}, \eqref{E_ex}, \eqref{a_equation1},
\eqref{t0_equation1} and \eqref{H_small} we obtain
\begin{equation}
\label{F_expand3}
 F\left(\frac{t_0}{2}-\frac{\pi}{4} , \frac{c^2-a^2}{c^2} \right)= - \frac{1}{2} \log{\left(\frac{h^2}{2c^2} \left(1- \frac{h^2}{\varepsilon^2} \right) \right)}+ \BigO{1},
\end{equation}
and
\begin{equation}
\label{E_expand3}
 E\left(\frac{t_0}{2}-\frac{\pi}{4} , \frac{c^2-a^2}{c^2} \right)= 1- \frac{h^2}{2c^2}- \frac{h^4}{4c^2 \varepsilon^2} \log{\left(\frac{h^2}{2c^2} \left(1- \frac{h^2}{\varepsilon^2} \right) \right)}+ \BigO{\frac{h^4}{\varepsilon^2}}.
\end{equation}
Using \eqref{F_complete}, \eqref{E_complete}, \eqref{F_expand3}, and \eqref{E_expand3} to expand the contribution of perimeter to \eqref{Energy_fun_case3} we obtain 
\begin{align}
\label{E_lb3}
  E_{\varepsilon} \left(\Omega, X \right)> 4 \pi c^2 + 4 \pi c
  \frac{h^2}{\varepsilon} -2 \pi h^2 + \frac{\pi h^4 }{\varepsilon^2}
  \log{\left( \frac{\varepsilon^4 c^2}{2h^6} \left(1-
  \frac{h^2}{\varepsilon^2} \right)\right)} \notag \\
  + 4 \pi \varepsilon^2 + 4 \pi \varepsilon \sqrt{\varepsilon^2-h^2} +
  \BigO{\frac{h^4}{\varepsilon^2},h^2}.  
\end{align}

and from our volume constraint given in \eqref{V_case3} we obtain 
\begin{align}
2= 2 c^3 + \frac{3 h^2 c^2}{\varepsilon} + \BigO{\varepsilon^3, h^2}.
\label{Ex__V3}
\end{align}
Thus, from \eqref{Ex__V3} we have 
\begin{equation}
\label{c_bigg3}
c \geq 1 - \frac{h^2 c^2}{2 \varepsilon} + \BigO{\varepsilon^3, h^2 }.
\end{equation}
Lastly, from \eqref{c_bigg3}, \eqref{H_small}, and \eqref{E_lb3} we obtain
\begin{align}
  E_{\varepsilon} \left(\Omega, X \right)>  4 \pi + \frac{\pi h^4
  }{\varepsilon^2} \log{\left( \frac{\varepsilon^4 c^2}{2h^6} \left(1-
  \frac{h^2}{\varepsilon^2} \right)\right)} \notag \\
  + 4 \pi \varepsilon^2 + 4 \pi \varepsilon \sqrt{\varepsilon^2-h^2} +
  \BigO{\varepsilon^3,h^2 , \frac{h^4}{\varepsilon^2}} \\ \geq 4 \pi +
  4 \pi \varepsilon^2, 
\end{align}
for $\varepsilon < \varepsilon_0$ with $\eps_0$ small. Thus, from
Lemma \ref{generalized_en} we conclude that $(\Omega, X )$ cannot be a
minimizer.
\end{proof}

\begin{theorem}
\label{Theorem_main2}
There exist universal constants $ C, \; C_1, \; \varepsilon_0 >0$ such
that if $\varepsilon < \varepsilon_0$ and
$\gamma < \frac{8 \pi}{\varepsilon}-C$ then there exists a unique, up
to translations and rotations, minimizer
$(\Omega^\ast, \{ (-\frac{L^\ast}{2},0,0),(\frac{L^\ast}{2},0,0)
\})$ to \eqref{En_N2}. Furthermore, it is a case 1 configuration, and if
$\frac{C_1}{\log{\varepsilon^{-1}}}<\gamma < \frac{8
  \pi}{\varepsilon}-C$, then
 \begin{align}
   E_{\varepsilon}
   & \left(
     \Omega^\ast, \left \{ \left
     (-\frac{L^\ast}{2},0,0 \right),\left(\frac{L^\ast}{2},0,0
     \right) \right \} \right)  \notag \\
   & =4 \pi + \frac{\gamma \varepsilon^3}{2}(1+\varepsilon+
     \varepsilon^2)+\frac{ \gamma^2 \varepsilon^6}{64 \pi} \log{(\gamma
     \varepsilon^4)}+\BigO{\varepsilon^6 \gamma^{\frac{7}{4}} (1+
     \gamma)^{\frac{1}{4}} (\log{\varepsilon^{-1}})^{\frac{3}{4}}}, \\
   L^\ast
   & = 2-2 \varepsilon - \frac{\gamma \varepsilon^3}{8 \pi} \log{(
     \gamma \varepsilon^4)} + \BigO{ \gamma^{\frac{3}{4}} (1+
     \gamma)^{\frac{1}{4}} \varepsilon^3
     (\log{\varepsilon^{-1}})^{\frac{3}{4}}}, 
\end{align}
and
$(\Omega^\ast, \{ (-\frac{L^\ast}{2},0,0),(\frac{L^\ast}{2},0,0) \}
)$ has contact height $h^\ast$ satisfying
\begin{equation}
  h^\ast=\sqrt[]{\frac{\gamma \varepsilon^4}{8 \pi}} + 
  \BigO{ \frac{  \varepsilon^2 \gamma ( \gamma+1)}{
      \log{{\varepsilon}^{-1}}}^{\frac{1}{4}}}.  
\end{equation}
\end{theorem}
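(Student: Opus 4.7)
The plan is to reduce Theorem \ref{Theorem_main2} to a one-variable minimization in the contact height $h$ and then extract the asymptotics by perturbation around the explicit leading-order minimizer.

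\textbf{Reduction to case 1 configurations with $h \leq \varepsilon/2$.} Existence of a classical minimizer for $\gamma < 8\pi/\varepsilon - C$ is given by Lemma \ref{Exist_MinN2}. By Lemma \ref{One_ArcN2} any minimizer must be of case 1, 2, or 3, and Propositions \ref{Case2_non} and \ref{Case3_non} exclude the last two, so it is of case 1. Such a configuration is uniquely determined, modulo rotation and translation, by its contact height $h \in (0,\varepsilon]$ via the relations of Lemma \ref{case1}, so the uniqueness and quantitative characterization of the minimizer reduce to the corresponding statements for the minimizer $h^*$ of the one-variable function $\hat{E}_\varepsilon(h)$. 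Testing with the ball $(B_1(0),\{\pm(1-\varepsilon)e_1\})$ yields $\hat{E}_\varepsilon(h^*) \leq 4\pi + \gamma\varepsilon^3/(2(1-\varepsilon)) = 4\pi + O(\varepsilon^2)$ in the given range of $\gamma$, whereas the lower bound \eqref{E_case1_hb} forces $\hat{E}_\varepsilon(h) \geq 4\pi + c\varepsilon^2 \log\varepsilon^{-1}$ for any $h > \varepsilon/2$. Hence $h^* \leq \varepsilon/2$ and the precise expansion \eqref{Case_one_exE} may be used.

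\textbf{Minimization of the leading functional.} Writing $\hat{E}_\varepsilon(h) = 4\pi + \tfrac{\gamma\varepsilon^3}{2}(1+\varepsilon+\varepsilon^2) + F(h) + \mathcal{R}(h)$ with $F(h) := -\bigl(\tfrac{2\pi h^4}{\varepsilon^2} - \tfrac{\gamma h^2\varepsilon^2}{2}\bigr)\log h$ and remainder $\mathcal{R}(h) = O(\gamma\varepsilon^6, (\gamma+1)h^2\varepsilon^2, h^4/\varepsilon^2)$ as in \eqref{Case_one_exE}, I seek the interior minimum of $F$ on $(0,\varepsilon/2]$. The Euler equation $F'(h) = 0$ is $\bigl(8\pi h^2/\varepsilon^2 - \gamma\varepsilon^2\bigr)\log h = -2\pi h^2/\varepsilon^2 + \gamma\varepsilon^2/2$; the substitution $h^2 = (\gamma\varepsilon^4/(8\pi))(1+\delta)$ converts it into $\delta(\log h + 1/4) = 1/4$, which has a unique solution $\delta = O(1/\log\varepsilon^{-1})$ in the relevant range. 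The leading critical point is therefore $h_0 = \sqrt{\gamma\varepsilon^4/(8\pi)}\,(1+O(1/\log\varepsilon^{-1}))$, and direct substitution gives $F(h_0) = \tfrac{\gamma^2\varepsilon^6}{64\pi}\log(\gamma\varepsilon^4) + O(\gamma^2\varepsilon^6)$, which is the asserted leading contribution to $E_\varepsilon^*$. The corresponding asymptotic for $L^*$ then follows from the exact formula \eqref{L} combined with the small-modulus expansions \eqref{F_expanded}, \eqref{E_expanded} of the elliptic integrals evaluated at $h = h^*$.

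\textbf{From $h_0$ to $h^*$ and uniqueness.} To promote the leading-order critical point $h_0$ of $F$ to the actual minimizer $h^*$ of $\hat{E}_\varepsilon = F + \mathrm{const} + \mathcal{R}$, I apply a standard implicit-function argument: a direct computation at $h=h_0$ gives $F''(h_0) \gtrsim \gamma\varepsilon^2\log\varepsilon^{-1}$, and the pointwise bound $|\mathcal{R}'(h)| \lesssim (\gamma+1)h\varepsilon^2 + h^3/\varepsilon^2$ yields $|h^* - h_0| \lesssim |\mathcal{R}'(h_0)|/F''(h_0)$, which is of the order claimed in the theorem. The lower bound $\gamma > C_1/\log\varepsilon^{-1}$ is precisely what is needed to make $|h^* - h_0| \ll h_0$, so that the Taylor expansion of $F$ around $h_0$ remains valid and the contribution of $\mathcal{R}$ to $\hat{E}_\varepsilon(h^*)$ stays within the stated error. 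Uniqueness of $h^*$ follows from strict convexity of $F$ near $h_0$ combined with the smallness of $\mathcal{R}'$ there, together with the sign structure of $F'$ ruling out additional critical points in $(0,\varepsilon/2]$.

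\textbf{Main obstacle.} The crux is the imbalance of scales between the merely logarithmic curvature of $F$ at $h_0$ and the polynomial-size error $\mathcal{R}$: each error term in \eqref{Case_one_exE} has to be tracked carefully both through the Euler equation, to pin down $h^*$, and through the final energy, to extract the $\tfrac{\gamma^2\varepsilon^6}{64\pi}\log(\gamma\varepsilon^4)$ correction to $E_\varepsilon^*$. In addition, the elliptic integrals appearing in the explicit formula \eqref{L} for $L$ must be expanded uniformly in $\gamma$ in the degenerate regime where the modulus approaches $1$ and the amplitude approaches $\pi/2$, which is the most delicate step in matching the stated asymptotic form of $L^*$.
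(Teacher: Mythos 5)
Your reduction to a one-variable minimization over case 1 configurations with $h \le \varepsilon/2$ matches the paper, but from that point on the two arguments diverge, and yours has a gap. The paper locates $h^\ast$ by a \emph{direct energy comparison}: it plugs the explicit test value $h_0 = \sqrt{\gamma\varepsilon^4/(8\pi)}$ into the expansion \eqref{Case_one_exE}, compares $\hat E_\varepsilon(h^\ast) \le \hat E_\varepsilon(h_0)$, and after writing $h^\ast = h_0 + h_e$ and cancelling the leading terms, extracts two-sided bounds on $h_e$ from the resulting inequality. This only uses the $C^0$ bound on the remainder $\mathcal R$ that Proposition \ref{Theorem_case1} actually furnishes. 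You instead set up an Euler--Lagrange equation $F'(h)=0$ and apply an implicit-function-theorem perturbation, which requires the bound $|\mathcal R'(h)| \lesssim (\gamma+1)h\varepsilon^2 + h^3/\varepsilon^2$. That bound is \emph{not} available: $\mathcal R$ is defined only through the $O$-estimates of \eqref{Case_one_exE}, and you cannot differentiate an $O$-estimate. To justify your claimed $|\mathcal R'|$ bound you would need to redo the elliptic-integral expansions \eqref{F_expanded}, \eqref{E_expanded}, \eqref{h_alpha}, etc., with $C^1$ (uniform derivative) control of the remainders, which is substantially more work than what Proposition \ref{Theorem_case1} provides and is not sketched in your proposal.

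Your leading-order computation of the critical point of $F$ (the substitution $h^2 = (\gamma\varepsilon^4/(8\pi))(1+\delta)$, the equation $\delta(\log h + 1/4) = 1/4$, and the value $F(h_0) = \tfrac{\gamma^2\varepsilon^6}{64\pi}\log(\gamma\varepsilon^4) + O(\gamma^2\varepsilon^6)$) is correct and nicely explains where the coefficient $\tfrac{1}{64\pi}$ comes from; the paper never makes this visible, simply testing against $h_0$ by fiat. For uniqueness, however, you inherit the same difficulty: you invoke "smallness of $\mathcal R'$," which isn't established. The paper sidesteps this by differentiating the \emph{exact} expression \eqref{Energy_fun} twice and re-expanding $\hat E_\varepsilon''(h)$ afresh, obtaining
\begin{equation*}
\hat E_\varepsilon''(h) = \Bigl(\tfrac{24\pi h^2}{\varepsilon^2} - \gamma\varepsilon^2\Bigr)\log h^{-1} + \BigO{\tfrac{h^2}{\varepsilon^2},\,\gamma\varepsilon^2},
\end{equation*}
from which strict convexity near the minimum follows. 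If you adopt this separate second-derivative expansion for uniqueness, and replace your implicit-function step by the paper's direct comparison $\hat E_\varepsilon(h^\ast) \le \hat E_\varepsilon(h_0)$ (which is strictly weaker in its input requirements), your argument closes; as it stands, the unjustified derivative bound on $\mathcal R$ is the missing piece.
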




  \begin{proof}
    First note that the existence and rotational symmetry of a
    minimizer follows directly from Theorem \ref{exgen}, Lemma
    \ref{Exist_MinN2} and Lemma \ref{rotational}. Furthermore, Lemma
    \ref{One_ArcN2} implies that this minimizer is either a case 1,
    case 2, or case 3 configuration. Thus, Propositions
    \ref{Case2_non} and \ref{Case3_non} rule out all but a case 1
    minimizer
    $(\Omega^\ast, \{
    (-\frac{L^\ast}{2},0,0),(\frac{L^\ast}{2},0,0) \})$.
  
    Let $h^\ast$ be the contact height of the minimizer above. Then
    from \eqref{E_case1_hb} and Lemma \ref{generalized_en} we conclude
    that
  \begin{equation}
  h^\ast \leq \frac{\varepsilon}{2}.
  \end{equation}
Thus, from \eqref{Case_one_exE} we conclude that 
\begin{multline}
 \label{E_expand_h_st}
 E_{\varepsilon} \left(\Omega^\ast, \left \{
     \left(-\frac{L^\ast}{2},0,0 \right), \left(\frac{L^\ast}{2},0,0
     \right) \right \} \right) \\
 =4 \pi + \frac{\gamma \varepsilon^3}{2}(1+\varepsilon+
 \varepsilon^2)- \left(\frac{2\pi
     {h^\ast}^4}{\varepsilon^2}-\frac{\gamma {h^\ast}^2
     \varepsilon^2}{2} \right) \log{( h^\ast)}\\
 + \BigO{\gamma
   \varepsilon^6,(\gamma+1) {h^\ast}^2 \varepsilon^2,
   \frac{{h^\ast}^4}{\varepsilon^2}}.
\end{multline}

Now let
$ \left(\Omega, \left\{ \left(-\frac{L}{2},0,0 \right),
    \left(\frac{L}{2},0,0 \right) \right\} \right)$ be a case 1
candidate minimizer with a contact height $h$ given by
\begin{equation}
h=\sqrt[]{\frac{\gamma \varepsilon^4}{8 \pi}}.
\end{equation}
From the minimality of
$\left(\Omega^\ast, \left \{ \left(-\frac{L^\ast}{2},0,0 \right),
    \left(\frac{L^\ast}{2},0,0 \right) \right\} \right)$ and from
\eqref{Case_one_exE} and \eqref{E_expand_h_st} we obtain
\begin{equation}
    \begin{aligned}
4 \pi + \frac{\gamma \varepsilon^3}{2}(1+\varepsilon+ \varepsilon^2)-
\left(\frac{2\pi {h^\ast}^4}{\varepsilon^2}-\frac{\gamma {h^\ast}^2
    \varepsilon^2}{2} \right) \log{( h^\ast)}+\BigO{\gamma
  \varepsilon^6,(\gamma+1) {h^\ast}^2 \varepsilon^2,
  \frac{{h^\ast}^4}{\varepsilon^2}} \\ 
\leq 4 \pi + \frac{\gamma \varepsilon^3}{2}(1+\varepsilon+
\varepsilon^2) +\frac{\gamma^2 \varepsilon^6}{32 \pi} \log{ \left(
    \sqrt[]{\frac{\gamma \varepsilon^4}{8 \pi}}\right)}+ \BigO{\gamma
  (\gamma+1) \varepsilon^6}. 
\end{aligned}
\end{equation}
Thus
\begin{equation}
\label{N2min_eq}
- \left(\frac{2\pi {h^\ast}^4}{\varepsilon^2}-\frac{\gamma
    {h^\ast}^2 \varepsilon^2}{2} \right) \log{(
  h^\ast)}+\BigO{\gamma (\gamma+1)\varepsilon^6,(\gamma+1)
  {h^\ast}^2 \varepsilon^2, \frac{{h^\ast}^4}{\varepsilon^2}} \leq
\frac{\gamma^2 \varepsilon^6}{32 \pi} \log{ \left(
    \sqrt[]{\frac{\gamma \varepsilon^4}{8 \pi}}\right)}. 
\end{equation}
Now note that \eqref{N2min_eq} implies that 
\begin{equation}
\label{h_bigg}
h^\ast \geq \frac{\sqrt[]{\gamma \varepsilon^4} }{8},
\end{equation}
since otherwise \eqref{N2min_eq} implies
\begin{equation}
  \frac{\gamma^2 \varepsilon^6}{128} \log{\left (\frac{\sqrt[]{\gamma
          \epsilon^4}}{8} \right) }+ \BigO{\gamma(\gamma+1)
    \varepsilon^6} \leq  \frac{\gamma^2 \varepsilon^6}{32 \pi} \log{
    \left( \sqrt[]{\frac{\gamma \varepsilon^4}{8 \pi}}\right)}. 
\end{equation}
However, this implies 
\begin{equation}
\label{h_small_con}
\frac{\gamma^2 \varepsilon^6}{256} \log{\left (\gamma \varepsilon^4
  \right) }+ \BigO{\gamma(\gamma+1) \varepsilon^6} \leq
\frac{\gamma^2 \varepsilon^6}{64 \pi} \log{ \left(\gamma \varepsilon^4
  \right)}. 
\end{equation} 
Now pick $\varepsilon_0>0$ so that
$| \log{(\gamma \varepsilon^4)}|> \BigO{\frac{\gamma+1}{\gamma}}$,
then \eqref{h_small_con} provides a contradiction. Thus,
\eqref{h_bigg} holds.


 Finally, let 
\begin{equation}
\label{h_star_he}
 h^\ast= \sqrt[]{\frac{\gamma \varepsilon^4}{8 \pi}}+ h_e,
\end{equation} 
then from \eqref{N2min_eq} we obtain 
\begin{multline}
\label{h_e_inq}
- \left ( \gamma \varepsilon^2 h_e^2 + \sqrt{8 \pi \gamma} h_e^3 +
  \frac{2 \pi}{\varepsilon^2} h_e^4 \right)  \log{h^\ast}  \\
\leq - \frac{ \gamma^2 \varepsilon^6}{32 \pi} \log{ \left(1 +
    \sqrt{\frac{8 \pi}{\gamma \varepsilon^4} } h_e \right )
}+\BigO{\gamma (\gamma+1)\varepsilon^6,(\gamma+1) {h^\ast}^2
  \varepsilon^2, \frac{{h^\ast}^4}{\varepsilon^2}}
\end{multline}
If $h_e>0$, then \eqref{h_star_he} implies $h^\ast>  \sqrt[]{\frac{\gamma \varepsilon^4}{8 \pi}}$, and \eqref{h_e_inq} gives
\begin{equation}
- \frac{2 \pi}{\varepsilon^2} h_e^4  \log{h^\ast} \leq \BigO{\frac{{h^\ast}^4 (\gamma+1)}{\gamma \varepsilon^2}}.
\end{equation}
Thus, 
\begin{equation}
\label{h_e_upp_b}
h_e \leq  \BigO{ h^\ast \left( \frac{   \gamma+1}{\gamma \log{\left({h^\ast}^{-1} \right)}} \right)^{\frac{1}{4}}}.
\end{equation}

Now \eqref{h_e_upp_b} and \eqref{h_star_he} imply that 
\begin{equation}
\label{h_star_upper}
h^\ast \leq  \sqrt[]{\frac{\gamma \varepsilon^4}{8 \pi}} \left( 1 +  \BigO{
    \left( \frac{   \gamma+1}{\gamma \log{({h^\ast}^{-1})}}
    \right)^{\frac{1}{4}}  } \right),
\end{equation} 
whenever $h^\ast \leq \varepsilon < \varepsilon_0$ is sufficiently small. 
\par If $h_e <0$, then from \eqref{h_star_he} we have
$0<h^\ast < \sqrt[]{\frac{\gamma \varepsilon^4}{8 \pi}}$ and
\eqref{h_e_inq} implies
\begin{equation}
\label{h_e_llb}
-  \frac{2 \pi}{\varepsilon^2} h_e^4   \log{h^\ast}  \leq  - \frac{
  \gamma^2 \varepsilon^6}{32 \pi} \log{ \left(1 + \sqrt{\frac{8
        \pi}{\gamma \varepsilon^4} } h_e \right ) }+\BigO{\gamma
  (\gamma+1)\varepsilon^6} . 
\end{equation}
Furthermore, \eqref{h_bigg} implies that
$h_e \geq \frac{\sqrt{\gamma \varepsilon^4}}{8} - \frac{\sqrt{\gamma
    \varepsilon^4}}{\sqrt{8 \pi}} $. Thus, from \eqref{h_e_llb} we
obtain
\begin{equation}
\label{h_e_lower_bbd}
h_e \geq  \BigO{ \varepsilon^2 \left( \frac{  \gamma ( \gamma+1)}{
      \log{({h^\ast}^{-1})}} \right)^{\frac{1}{4}}}.  
\end{equation}
Hence, \eqref{h_star_upper} and \eqref{h_e_lower_bbd} imply that
\begin{equation}
\label{h_star_ex}
h^\ast =\sqrt[]{\frac{\gamma \varepsilon^4}{8 \pi}} + 
\BigO{  \varepsilon^2 \left( \frac{  \gamma ( \gamma+1)}{
      \log{{\varepsilon}^{-1}}} \right)^{\frac{1}{4}}}.  
\end{equation}
Thus, from \eqref{E_expand_h_st} and \eqref{h_star_ex} we obtain
\begin{multline}
  E_{\varepsilon} \left(\Omega^\ast, \left\{
      \left(-\frac{L^\ast}{2},0,0 \right),
      \left(\frac{L^\ast}{2},0,0
      \right) \right \} \right) \\
  =4 \pi + \frac{\gamma \varepsilon^3}{2}(1+\varepsilon+
  \varepsilon^2)+\frac{ \gamma^2 \varepsilon^6}{64 \pi} \log{(\gamma
    \varepsilon^4)}+\BigO{\varepsilon^6 \gamma^{\frac{7}{4}} (1+
    \gamma)^{\frac{1}{4}} (\log{\varepsilon^{-1}})^{\frac{3}{4}}}.
\end{multline}
Lastly, from \eqref{F_expanded}, \eqref{E_expanded}, and
\eqref{Energy_fun} we obtain
\begin{equation}
  L^\ast= 2-2 \varepsilon - \frac{\gamma \varepsilon^3}{8 \pi} \log{(
    \gamma \varepsilon^4)} + \BigO{ \gamma^{\frac{3}{4}} (1+
    \gamma)^{\frac{1}{4}} \varepsilon^3
    (\log{\varepsilon^{-1}})^{\frac{3}{4}}}. 
\end{equation}

To conclude the proof, observe that
$(\Omega^\ast, \{ (-\frac{L^\ast}{2},0,0),(\frac{L^\ast}{2},0,0) \})$
is unique, since expanding the second derivative of \eqref{Energy_fun}
gives
\begin{equation}
  \hat E_\varepsilon'' (h) = \left (\frac{24 \pi h^2}{\varepsilon^2} -
    \gamma \varepsilon^2 \right) \log h^{-1} +
  \BigO{\frac{h^2}{\varepsilon^2}, \gamma \varepsilon^2 },
\end{equation}
which implies that $\hat E_\eps(h)$ is strictly convex in the neighborhood of
the minimum.
  \end{proof}

  \begin{proof}[Proof of Theorem \ref{t:2}]
    The proof is obtained by combining the statements of Propositions
    \ref{Case2_non} and \ref{Case3_non} with that of Theorem
    \ref{Theorem_main2}.
  \end{proof}

\paragraph{Conflict of interest.} The Authors declare no conflict of
interest.

\paragraph{Data availability.} The manuscript contains no associated
data.

\bibliographystyle{plain}



\end{document}